\documentclass[reqno]{amsart}





\usepackage[utf8]{inputenc}
\usepackage{amssymb, amsmath, amsthm, color, enumerate, mathabx, mathrsfs}
\usepackage{bbm}

\usepackage{pdfpages}

\usepackage{hyperref}

\usepackage{tcolorbox}

\numberwithin{equation}{section}

\newcommand{\R}{\mathbb{R}}

\newcommand{\Z}{\mathbb{Z}}
\newcommand{\N}{\mathbb{N}}

\newcommand{\Sc}{\mathcal{S}}

\newcommand{\cL}{\mathcal{L}}

\newcommand{\cT}{\mathcal{T}}

\newcommand{\ud}{\mathrm{d}}

\newcommand{\bC}{\mathbf{C}}

\newcommand{\bJ}{\mathbf{J}}

\newcommand{\fa}{\mathfrak{a}}

\newcommand{\fJ}{\mathfrak{J}}

\newcommand{\fG}{\mathfrak{G}}

\def\inn#1#2{\langle#1,#2\rangle}

\theoremstyle{plain}

\newtheorem{theorem}{Theorem}[section]
\newtheorem{lemma}[theorem]{Lemma}
\newtheorem{proposition}[theorem]{Proposition}

\theoremstyle{definition}
\newtheorem{definition}[theorem]{Definition}

\newtheorem{remark}[theorem]{Remark}

\newcommand{\floor}[1]{\lfloor #1 \rfloor }
\newcommand{\ceil}[1]{\lceil #1 \rceil }
\newcommand{\be}{\mathbf{e}}
\newcommand{\xip}{\tfrac{\xi}{|\xi|}}

\newcommand{\supp}{\mathrm{supp}\,}
\newcommand{\xisupp}{\mathrm{supp}_{\xi}\,}

\newcommand{\dist}{\mathrm{dist}}

\def\bbone{{\mathbbm 1}}




\begin{document}





\title[Estimates for the helical maximal function]{Off-diagonal estimates for the helical maximal function}

\date{}

\subjclass[2020]{42B25, 42B20}
\keywords{Helical maximal function, $L^p-L^q$ local smoothing estimates}

\begin{abstract}
The optimal $L^p \to L^q$ mapping properties for the (local) helical maximal function are obtained, except for endpoints. The proof relies on tools from multilinear harmonic analysis and, in particular, a localised version of the Bennett--Carbery--Tao restriction theorem. 
\end{abstract}

\author[D. Beltran]{ David Beltran }
\address{David Beltran: Departament d’An\`alisi Matem\`atica, Universitat de Val\`encia, Dr. Moliner 50, 46100 Burjassot, Spain}
\email{david.beltran@uv.es}
\thanks{D.B. supported by the AEI grant RYC2020-029151-I}

\author[J. Duncan]{ Jennifer Duncan}
\address{Jennifer Duncan:
ICMAT, C. de Nicolas Cabrera 13-15, 28049 Madrid, Spain}
\email{jennifer.duncan@icmat.es}
\thanks{J.D. partially supported by ERC grant 834728 and Severo Ochoa grant CEX2019-000904-S}

\author[J. Hickman]{ Jonathan Hickman }
\address{Jonathan Hickman: School of Mathematics, James Clerk Maxwell Building, The King's Buildings, Peter Guthrie Tait Road, Edinburgh, EH9 3FD, UK.}
\email{jonathan.hickman@ed.ac.uk}




\maketitle




\section{Introduction}




\subsection{Main results}

 Let $\gamma \colon I \to \R^3$ be a smooth curve, where $I := [-1,1]$, which is \textit{non-degenerate} in the sense that there is a constant $c_0 > 0$ such that 
\begin{equation}\label{eq:nondegenerate}
    |\det(\gamma^{(1)}(s), \gamma^{(2)}(s), \gamma^{(3)}(s))| \geq c_0 \qquad \textrm{for all $s \in I$.}
\end{equation}
This is equivalent to saying that $\gamma$ has non-vanishing curvature and torsion. Prototypical examples are the helix $\gamma(s)=(\cos(2\pi s), \sin(2\pi s), s)$ or the moment curve $\gamma(s)=(s,s^2/2,s^3/6)$. Given $t>0$, consider the averaging operator
\begin{equation*}
    A_tf(x) := \int_{\R} f(x - t\gamma(s))\,\chi(s)\,\ud s,
\end{equation*}
defined initially for Schwartz functions $f \in \Sc(\R^3)$, where $\chi \in C^{\infty}(\R)$ is a bump function supported on the interior of $I$. Furthermore, define the associated local maximal function
\begin{equation*}
    M_{\gamma} f(x):= \sup_{1 \leq t \leq 2} |A_t f(x)|.
\end{equation*}
Here we are interested in determining the sharp range of $L^p \to L^q$ estimates for $M_{\gamma}$. To describe the results, let 
\begin{equation*}
    \cT := \overline{\textrm{conv}\{(0,0), (1/3, 1/3), (1/4, 1/6)\}} \,\setminus\, \{(1/3, 1/3)\},
\end{equation*} 
so that $\cT$ is a closed triangle (formed by the closed convex hull of three points) with one vertex removed. We let $\mathrm{int}(\cT)$ denote the interior of $\cT$ and $\cL$ denote the intersection of $\cT$ with the diagonal: see Figure~\ref{fig: Riesz}. Standard examples show that $M_{\gamma}$ fails to be $L^p \to L^q$ bounded whenever $(1/p, 1/q) \notin \cT$: see \S\ref{sec:nec}. The following theorem therefore characterises the type set of $M_{\gamma}$, up to endpoints. 

\begin{theorem}\label{thm:main} For all $(1/p, 1/q) \in \mathrm{int}(\cT) \cup \cL$, there exists a constant $C_{\gamma, p,q} \geq 1$ such that the a priori estimate
\begin{equation*}
    \|M_{\gamma}f\|_{L^q(\R^3)} \leq C_{\gamma, p,q} \|f\|_{L^p(\R^3)}
\end{equation*}
holds for all $f \in \Sc(\R^3)$. 
\end{theorem}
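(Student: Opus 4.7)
The plan is to reduce the maximal function bound to sharp local smoothing via Littlewood--Paley and Sobolev embedding, and then prove the resulting local smoothing estimate at the critical off-diagonal endpoint using a multilinear restriction argument. First I would perform a Littlewood--Paley decomposition $f = \sum_{k \geq 0} f_k$ with $\widehat{f_k}$ frequency-localized at scale $2^k$. Since $\partial_t A_t f_k$ is supported at the same frequency scale but carries an extra factor $2^k$, the standard Sobolev embedding in $t \in [1,2]$ yields
\begin{equation*}
    \|M_\gamma f_k\|_{L^q(\R^3)} \lesssim 2^{k/q}\|A_t f_k\|_{L^q([1,2]\times \R^3)}.
\end{equation*}
To sum in $k$ one then needs a local smoothing estimate of the form $\|A_t f_k\|_{L^q_{t,x}} \lesssim 2^{-k(1/q + \varepsilon)}\|f_k\|_{L^p}$ with some $\varepsilon = \varepsilon(p,q) > 0$.

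By Riesz--Thorin interpolation, together with the trivial $L^\infty \to L^\infty$ bound and the diagonal estimates $L^p \to L^p$ for $p > 3$ (already available for non-degenerate curves from Pramanik--Seeger type work), the task reduces to proving local smoothing at the off-diagonal vertex $(1/p, 1/q) = (1/4, 1/6)$. For this $L^4 \to L^6_{t,x}$ estimate I would run a Bourgain--Guth broad/narrow scheme: partition the parameter interval $I$ into subintervals $I_j$ of length $K^{-1}$, decompose $A_t f_k = \sum_j A_t^j f_k$ accordingly, and note that at each $(t,x)$ either three transversal pieces contribute comparably (broad case) or the mass concentrates on a few intervals (narrow case). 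The broad case is controlled by a trilinear extension-type inequality derived from a localized Bennett--Carbery--Tao multilinear restriction theorem, exploiting the transversality furnished by the non-degeneracy condition \eqref{eq:nondegenerate} applied to the conic surface generated by $\gamma$. The narrow case is reduced to a rescaled instance of the same problem via the affine rescaling symmetries associated with non-degenerate space curves, closing the argument by induction on the scale $k$.

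The main obstacle is establishing and applying the \emph{localized} version of BCT. The classical BCT is formulated for Fourier extension operators defined globally on $\R^d$, whereas here one must accommodate the physical-space localization inherited from $t \in [1,2]$ and the cutoff $\chi$, and work with wave packets adapted to the conic $3$-surface generated by the curve rather than a classical hypersurface extension operator. Careful management of the wave-packet leakage outside the localization region, together with coordination of the $\varepsilon$-losses arising in both the multilinear step and the broad/narrow induction so that they do not overwhelm the $2^{-\varepsilon k}$ gain required when summing the Littlewood--Paley pieces, is where the principal technical work lies. Once the localized trilinear inequality is in place with sharp dependence on the localization parameters, the rest of the scheme is expected to proceed along well-established lines.
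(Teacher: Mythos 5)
Your high-level architecture matches the paper's strategy: Sobolev embedding to reduce to local smoothing, interpolation to isolate the critical vertex $(1/4,1/6)$, a Bourgain--Guth broad/narrow dichotomy combined with affine rescaling and induction on scale to pass from trilinear to linear, and a localised Bennett--Carbery--Tao inequality to handle the broad/transversal term. However, there is a genuine gap at the heart of the argument: a (localised) BCT-type trilinear restriction theorem delivers an $L^2$-based estimate --- concretely a trilinear $L^2 \to L^3$ local smoothing bound --- and \emph{cannot} by itself produce the trilinear $L^4 \to L^6$ estimate that the broad/narrow scheme demands at the vertex $(1/4,1/6)$. Your proposal says the broad case ``is controlled by a trilinear extension-type inequality derived from a localized BCT,'' but offers no mechanism for upgrading the $L^2$ input to an $L^4$ input, and this is not a routine step. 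The paper closes this gap by interpolating the trilinear $L^2 \to L^3$ estimate (Proposition~\ref{prop:L2 L3 trilinear}) against an $L^{12} \to L^{12}$ local smoothing estimate (Proposition~\ref{prop:L12}), obtained via the Bourgain--Demeter--Guth decoupling inequality applied to a conic extension of the non-degenerate curve, together with a trivial $L^2 \to L^\infty$ bound (Lemma~\ref{lem:L2 Linfty}); the three exponents are chosen so that $(1/4,1/6) = \tfrac{3}{5}(1/12,1/12) + \tfrac{7}{20}(1/2,1/3) + \tfrac{1}{20}(1/2,0)$. The decoupling input is not a cosmetic add-on: it is the explicit ``counterpoint'' that makes the multilinear-restriction estimate usable away from $L^2$, and it is the other half of the contribution of this paper.

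A second, related omission is the microlocal decomposition in the dyadic parameter $\ell$ measuring the size of $|u(\xi)|$, where $u(\xi) = \inn{\gamma'\circ\theta_2(\xi)}{\xi}$ detects proximity to the degenerate $2$-cone $\Gamma_2$ inside the $3$-cone $\Gamma_3$. This decomposition is what generates the localisation parameter $\mu \sim 2^{-2\ell}$ fed into Theorem~\ref{thm: local BCT}, and the resulting $\mu^{1/6}$ gain in the trilinear $L^2 \to L^3$ bound (balanced against a $2^{-\ell/12}$ gain from decoupling on the $L^{12}$ side and a $2^{-\ell/2}$ loss on the $L^\infty$ side) is precisely what allows the pieces to be summed in $\ell$. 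Without specifying this decomposition, the role of the ``localisation'' in the localised BCT is left unmotivated and the $\varepsilon$-bookkeeping you flag as the main obstacle cannot even be set up. So while your outline captures the broad/narrow skeleton, it misses the two ingredients --- decoupling and the $\ell$-decomposition relative to $\Gamma_2 \subset \Gamma_3$ --- that the paper's methodology section identifies as the key to working simultaneously with both cone geometries.
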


For the diagonal case (that is, $(1/p, 1/q) \in \cL$), the sharp range of estimates was established in~\cite{BGHS-helical} and~\cite{KLO2022}, building on earlier work of~\cite{PS2007}. Hence, our main result is to push the range of boundedness to the region $\mathrm{int}(\cT)$.  As a consequence of Theorem \ref{thm:main} (or, more precisely, Theorem \ref{thm: Lp Lq loc smoothing} below) and~\cite[Theorem 1.4]{BRS}, $(p,q')$-sparse bounds for the full maximal operator $M_\gamma^{\mathrm{full}}f(x):=\sup_{t>0} |A_t f(x)|$  follow for $(1/p,1/q) \in \mathrm{int}(\cT)\cup \cL$. We omit the details and refer to~\cite{BRS} for the precise statements. 

\begin{figure}
    \centering
    \includegraphics{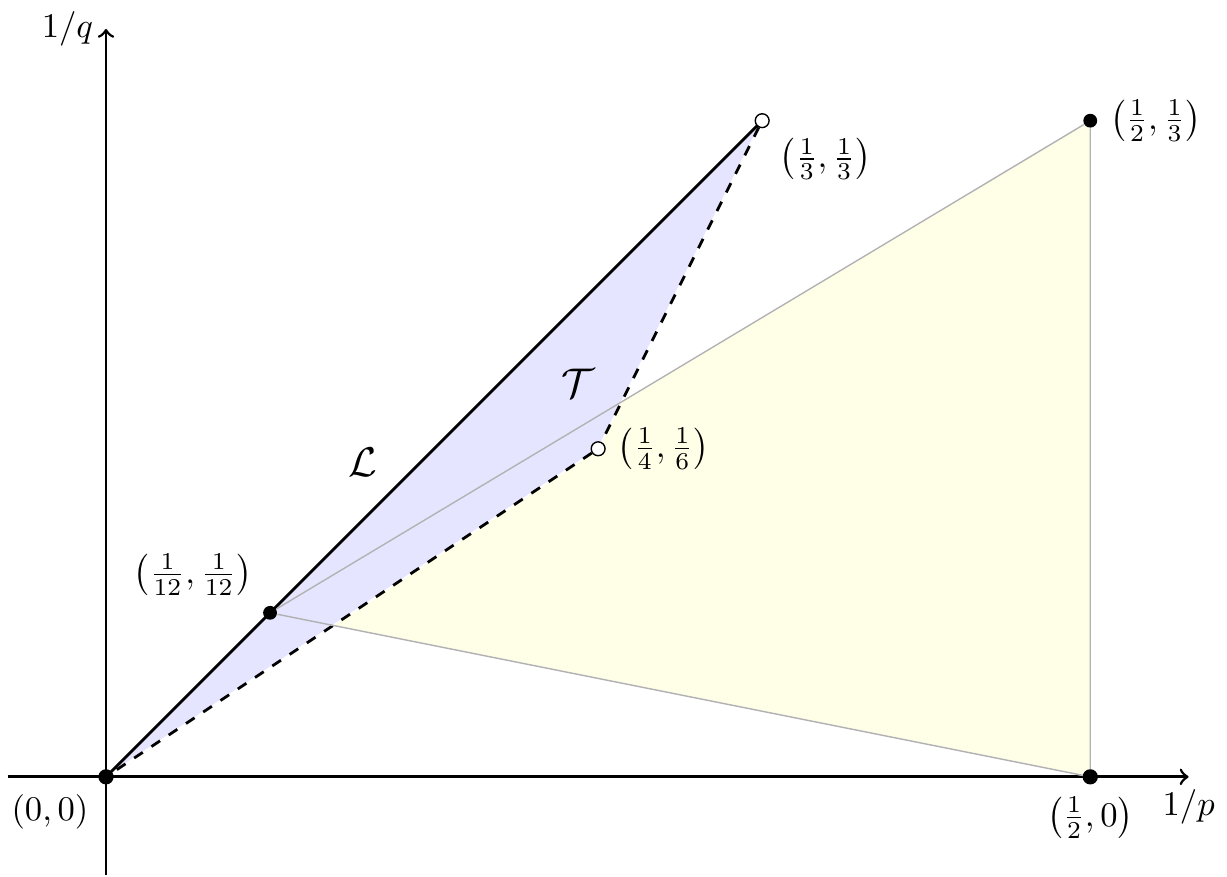}
    \caption{The known range of $L^p \to L^q$ boundedness for the helical maximal function. In~\cite{BGHS-helical} and~\cite{KLO2022}, boundedness was shown on the half-open line segment $\cL$ connecting $(0,0)$ and $(1/3, 1/3)$. By Theorem~\ref{thm:main}, the operator is bounded whenever $(1/p, 1/q) \in \mathrm{int}(\cT)$, the interior of the triangle with vertices $(0,0)$, $(1/3, 1/3)$ and $(1/4, 1/6)$. Frequency localised estimates are obtained at the critical vertex $(1/4, 1/6)$ by interpolating multilinear inequalities at $(1/2, 1/3)$, $(1/12, 1/12)$ and $(1/2, 0)$; see \S\ref{subsec: microloc est} below.}
    \label{fig: Riesz}
\end{figure}




\subsection{Methodology}

Here we provide a brief overview of the ingredients of the proof of Theorem~\ref{thm:main} and the novel features of the argument. For fixed $t$, the averaging operators $A_t$ correspond to convolution with an appropriate measure $\mu_t$ on the $t$-dilate of $\gamma$. It is therefore natural to study these objects via the Fourier transform, which leads us to consider the multiplier
\begin{equation*}
    \hat{\mu}_t(\xi) = \int_{\R} e^{-it\inn{\gamma(s)}{\xi}} \chi(s)\,\ud s. 
\end{equation*}
Stationary phase can be used to compute the decay rate of this function in different directions in the frequency space. This involves analysing the vanishing of $s$-derivatives of the phase function $\phi(\xi, s) := \inn{\gamma(s)}{\xi}$. Following earlier work on the circular maximal function~\cite{MSS1992}, it is also useful to study the Fourier transform of $A_tf(x)$ in \textit{both} the $x$ and the $t$ variables. This leads us to consider the $4$-dimensional $(\xi, \tau)$ frequency space.  

Broadly speaking, this approach was taken in both works~\cite{KLO2022} and~\cite{BGHS-helical} to study the $L^p \to L^p$ mapping properties of $M_{\gamma}$. However, these papers focused on different geometrical aspects of the problem. In very rough terms, the analysis of~\cite{KLO2022} centres around a $3$-dimensional cone $\Gamma_3$ in $(\xi, \tau)$-space arising from the equations $\partial_s\phi(\xi,s) = 0$, $\tau=\phi(\xi,s)$. On the other hand, the analysis of~\cite{BGHS-helical} centres around a $2$-dimensional cone $\Gamma_2$ in the $(\xi, \tau)$-space arising from the system of equations $\partial_s\phi(\xi,s) = \partial_s^2\phi(\xi,s)= 0$, $\tau=\phi(\xi,s)$.

It seems difficult to obtain almost optimal $L^p \to L^q$ estimates using either the approach of~\cite{KLO2022} or of~\cite{BGHS-helical} in isolation; rather, it appears necessary to incorporate both geometries into the analysis. In order to do this, we apply a recent observation of Bejenaru~\cite{Bejenaru2019}, which provides a localised variant of the Bennett--Carbery--Tao multilinear restriction theorem~\cite{BCT2006}. We describe the relevant setup in detail in \S\ref{sec: loc mlr} below; moreover, in the appendix we relate the required localised estimates to the theory of Kakeya--Brascamp--Lieb inequalities from~\cite{BBFL2018}. The local multilinear restriction estimate allows us to work simultaneously with the $\Gamma_2$ and $\Gamma_3$ geometries, by considering the embedded cone $\Gamma_2$ as a localised portion of $\Gamma_3$. See Proposition~\ref{prop:L2 L3 trilinear} below.

On the other hand, the geometries of both $\Gamma_2$ and $\Gamma_3$ were previously exploited in a non-trivial manner in~\cite{KLO2023} and~\cite{BGHS-Sobolev} using the decoupling inequalities from~\cite{BDG2016}. This approach is inspired by earlier work of Pramanik--Seeger~\cite{PS2007}. Decoupling is effective for proving $L^p \to L^p$ bounds for large $p$; here it is used to provide a counterpoint for interpolation with the estimates obtained via local multilinear restriction. See Proposition~\ref{prop:L12} below.




\subsection{Notational conventions} Throughout the paper, $I$ denotes the interval $[-1,1]$. 

We let $\widehat{\R}$ denote the \textit{frequency domain}, which is the Pontryagin dual group of $\R$  understood here as simply a copy of $\R$. Given $f \in L^1(\R^d)$ and $g \in L^1(\widehat{\R}^d)$ we define the Fourier transform and inverse Fourier transform by
\begin{equation*}
    \hat{f}(\xi) := \int_{\R^d} e^{-i \inn{x}{\xi}} f(x)\,\ud x \quad \textrm{and} \quad \check{g}(x) := \frac{1}{(2\pi)^d}\int_{\widehat{\R}^d} e^{i \inn{x}{\xi}} g(\xi)\,\ud \xi,
\end{equation*}
respectively. For $m \in L^{\infty}(\widehat{\R}^d)$ we define the multiplier operator $m(D)$ which acts initially on Schwartz functions by
\begin{equation*}
    m(D)f(x) := \frac{1}{(2\pi)^d} \int_{\widehat{\R}^d} e^{i \inn{x}{\xi}} m(\xi) \hat{f}(\xi)\,\ud \xi. 
\end{equation*}

Given a list of objects $L$ and real numbers $A$, $B \geq 0$, we write $A \lesssim_L B$ or $B \gtrsim_L A$ to indicate $A \leq C_L B$ for some constant $C_L$ which depends only items in the list $L$ and our choice of underlying non-degenerate curve $\gamma$. We write $A \sim_L B$ to indicate $A \lesssim_L B$ and $B \lesssim_L A$. 

\subsection{Organisation of the paper}
\begin{itemize}
    \item In \S\ref{sec: loc mlr} we present the key localised trilinear restriction estimate.
    \item In \S\ref{sec: init_red} we describe a reduction of Theorem~\ref{thm:main} to three local-smoothing-type estimates: trilinear $L^2 \to L^3$, linear $L^{12} \to L^{12}$ and trivial $L^2 \to L^{\infty}$.
    \item In \S\ref{sec: preliminaries} we describe the basic properties of our operators and prove the trivial $L^2 \to L^{\infty}$ estimate.
    \item In \S\ref{sec: trilinear} we prove the trilinear $L^2\rightarrow L^3$ estimate using the trilinear restriction theorem from \S\ref{sec: loc mlr}.
    \item In \S\ref{sec: L12 loc smoothing} we prove the linear $L^{12}\rightarrow L^{12}$ estimate using decoupling.
    \item In \S\ref{sec: non-deg} we bound a non-degenerate portion of the operator.
    \item In \S\ref{sec: trilin to lin} we carry out the reduction described in \S\ref{sec: init_red} and thereby bound the remaining degenerate portion of the operator. 
    \item In \S\ref{sec:nec} we demonstrate the sharpness of the range $\mathcal{T}$.
    \item Finally, in Appendix A we present a proof of the localised trilinear restriction theorem from \S\ref{sec: loc mlr}.
\end{itemize}

\subsection*{Acknowledgements} The first and third authors would like to thank Shaoming Guo and Andreas Seeger for discussions related to the topic of this paper over the years.




\section{Localised trilinear restriction}\label{sec: loc mlr}

The key ingredient in the proof of Theorem~\ref{thm:main} is a localised trilinear Fourier restriction estimate. Here we describe the particular setup for our problem. As in~\cite{KLO2022}, it is necessary to work with functions with a limited degree of regularity. 

\begin{definition} Let $0 < \alpha \leq 1$ and $U \subseteq \R^d$ be an open set. We say a function $Q \colon U \to \R$ is of class $C^{1,\alpha}(U)$ if it is continuously differentiable on $U$ and, moreover, the partial derivatives satisfy the $\alpha$-H\"older condition
\begin{equation*}
    \sup_{\substack{\xi_1, \xi_2 \in U \\ \xi_1 \neq \xi_2}} \frac{|\nabla Q(\xi_1) - \nabla Q(\xi_2)|}{|\xi_1 - \xi_2|^{\alpha}} < \infty. 
\end{equation*}
\end{definition}

Consider an ensemble $\mathbf{Q} = (Q_1, Q_2, Q_3)$ of maps $Q_j \colon U_j \to \R$ of class $C^{1,1/2}(U_j)$ where $U_j \subseteq \widehat{\R}^3$ is an open domain\footnote{Here an \textit{open domain} in $\widehat{\R}^d$ is an open, bounded, connected subset of $\widehat{\R}^d$.} for $1 \leq j \leq 3$. The graphs 
\begin{equation*}
  \Sigma_j := \{(\xi, Q_j(\xi)) : \xi \in U_j \}   
\end{equation*}
are hypersurfaces in $\widehat{\R}^4$, with some limited regularity. Each $\Sigma_j$ has a Gauss map given by 
\begin{equation*}
\nu_j \colon U_j \to S^3, \qquad \nu_j(\xi) := \frac{1}{(1 + |\nabla Q_j(\xi)|^2)^{1/2}} \begin{pmatrix}
    -\nabla Q_j(\xi) \\
    1
\end{pmatrix}
\qquad \textrm{for all $\xi \in U_j$.}
\end{equation*}
We further fix a smooth function $u \colon U_3 \to \R$ satisfying $|\nabla u(\xi)| > c_0 > 0$ for all $\xi \in U_3$. This implicitly defines a smooth surface $Z_3 := \{\xi \in \widehat{\R}^3 : u(\xi) = 0\}$, which we lift to
\begin{equation*}
    \Sigma_3' := \{(\xi, Q_3(\xi)) : \xi \in Z_3 \}.
\end{equation*}
Thus, $\Sigma_3'$ is a codimension 1 submanifold of $\Sigma_3$, which is embedded in $\widehat{\R}^4$. Defining
\begin{equation*}
  \nu_3' \colon Z_3 \to S^3, \quad  \nu_3'(\xi) := \frac{1}{|\nabla u(\xi)|} \begin{pmatrix}
        \nabla u(\xi) \\
        0
    \end{pmatrix} \qquad \textrm{for all $\xi \in Z_3$,}
\end{equation*}
it follows that $\{\nu_3(\xi), \nu_3'(\xi)\}$ forms a basis of the normal space to $\Sigma_3'$ at $(\xi, Q_3(\xi))$ for all $\xi \in Z_3$. 

We now fix $a_j \in C_c(U_j)$ with $\|a_j\|_{L^{\infty}(U_j)} \leq 1$ for $1 \leq j \leq 3$ and we assume the \textit{transversality hypothesis} 
\begin{equation}\label{eq: trans}
    \bigg|
    \det
    \begin{pmatrix}
        \nabla Q_1(\xi_1) & \nabla Q_2(\xi_2) & \nabla Q_3(\xi_3) & \nabla u(\xi_3) \\
        1 & 1 & 1 & 0
    \end{pmatrix}
       \bigg| 
    > c_{\mathrm{trans}} > 0 
\end{equation}
 for all $\xi_j \in \supp a_j$, $1 \leq j \leq 3$. Furthermore, given $0 < \mu < 1$, we assume the additional \textit{localisation hypothesis}
 \begin{equation}\label{eq: localisation}
 |u(\xi)| < \mu \qquad \textrm{for all $\xi \in \supp a_3$.} 
 \end{equation}
 Finally, we define the \textit{extension operators}
 \begin{equation*}
 E_jf(x,t) := \int_{\widehat{\R}^3} e^{i(\inn{x}{\xi} + tQ_j(\xi))} a_j(\xi) f(\xi)\,\ud \xi \qquad \textrm{for $f \in L^1(U_j)$, $1 \leq j \leq 3$.}    
 \end{equation*}
The key localised trilinear estimate is as follows.

\begin{theorem}[Localised trilinear restriction]\label{thm: local BCT} With the above setup, for all $\varepsilon > 0$ and all $R \geq 1$ we have
\begin{equation*}
    \Big\|\prod_{j=1}^3|E_jf_j|^{1/3}\Big\|_{L^3(B(0,R))} \lesssim_{\mathbf{Q}, \varepsilon} R^{\varepsilon} \mu^{1/6} \prod_{j=1}^3 \|f_j\|_{L^2(U_j)}^{1/3}
\end{equation*}
for all $f_j \in L^1(U_j)$, $1 \leq j \leq 3$.
\end{theorem}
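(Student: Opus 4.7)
The plan is to exploit the localisation hypothesis \eqref{eq: localisation} by foliating the $\mu$-neighborhood of $Z_3$ along level sets of $u$, reducing the problem to a mixed trilinear restriction estimate for two codimension-1 extensions and one codimension-2 extension, and then applying Cauchy--Schwarz in the transverse direction to extract the $\mu^{1/6}$ gain. Concretely, since $|\nabla u| > c_0 > 0$ on $\supp a_3$, I would begin with a smooth change of variables $\xi \mapsto (\xi', \eta)$ with $\eta = u(\xi)$ and write
\[
E_3 f_3(x,t) = \int_{|\eta|<\mu} \mathcal{E}_3^\eta f_3^\eta(x,t)\, \ud\eta,
\]
where $\mathcal{E}_3^\eta$ denotes the extension operator associated with the codimension-2 submanifold $\Sigma_3 \cap \{u = \eta\}$ of $\widehat{\R}^4$, and $f_3^\eta$ is the corresponding slice of $f_3$ with its Jacobian factor absorbed. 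A Cauchy--Schwarz in the $\eta$-integral then gives
\[
|E_3 f_3(x,t)|^2 \leq 2\mu \int_{|\eta|<\mu} |\mathcal{E}_3^\eta f_3^\eta(x,t)|^2\, \ud\eta,
\]
which, after substitution into the identity $\|\prod_j |E_j f_j|^{1/3}\|_{L^3}^3 = \int |E_1 f_1||E_2 f_2||E_3 f_3|$ and taking cube roots, produces the claimed factor $\mu^{1/6}$.

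The next step is to establish a $\mu$-uniform fiberwise trilinear restriction estimate for the triple $(E_1, E_2, \mathcal{E}_3^\eta)$. The relevant transversality datum is that $\nu_1(\xi_1)$, $\nu_2(\xi_2)$, together with the $2$-plane spanned by $\nu_3(\xi_3)$ and $\nu_3'(\xi_3)$, span $\widehat{\R}^4$ uniformly by \eqref{eq: trans}. This mixed trilinear restriction inequality --- with two codimension-1 extensions and one codimension-2 extension in $\widehat{\R}^4$ --- would be handled via the Kakeya--Brascamp--Lieb framework of~\cite{BBFL2018}, as anticipated by the paper's appendix: a standard wave-packet decomposition and induction on scales reduce the estimate to a Kakeya-type inequality for tubes in the four normal directions $\nu_1, \nu_2, \nu_3, \nu_3'$, whose associated Brascamp--Lieb constant is quantitatively controlled by $c_{\mathrm{trans}}$. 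Combining this fiberwise bound with the Cauchy--Schwarz above via Fubini, the Plancherel-type identity $\int \|f_3^\eta\|_2^2 \ud\eta = \|f_3\|_2^2$, and H\"older's inequality then assembles the full estimate with only an $R^\varepsilon$ loss.

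The main obstacle I expect is the limited $C^{1,1/2}$ regularity of the $Q_j$, since classical Bennett--Carbery--Tao arguments rely on second-order Taylor expansions of the phase functions at fine scales. These will have to be replaced here by H\"older-continuity bounds on the Gauss maps $\nu_j$, with constants tracked carefully through the induction-on-scales iteration. The stability of the Brascamp--Lieb constants under perturbation established in~\cite{BBFL2018} should provide the required robustness, but verifying that the iteration does not accumulate unbounded regularity losses will demand careful bookkeeping.
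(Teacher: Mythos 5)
Your proposal follows the same overall strategy as the paper's Appendix~\ref{sec: appendix}: foliate the $\mu$-neighbourhood of $Z_3$ along the transverse direction, apply a fiberwise mixed-codimension trilinear restriction estimate from~\cite[Theorem 1.3]{BBFL2018} to the triple $(\Sigma_1,\Sigma_2,\Sigma_{3,r}')$, extract a $\mu^{1/2}$ gain (hence $\mu^{1/6}$ after cube roots) by Cauchy--Schwarz in the transverse variable, and address the $C^{1,1/2}$ regularity of the $Q_j$ by extending the induction-on-scales argument. The identification of the key transversality datum via \eqref{eq: trans} and the Plancherel-type identity $\int\|f_3^\eta\|_2^2\,\ud\eta = \|f_3\|_2^2$ are both correct.

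However, the precise ordering of your Cauchy--Schwarz does not compose cleanly with the fiberwise estimate as written. You propose the pointwise bound
\[
|E_3f_3(x,t)|^2 \leq 2\mu\int_{|\eta|<\mu}|\mathcal{E}_3^\eta f_3^\eta(x,t)|^2\,\ud\eta,
\]
and to substitute this into $\int|E_1f_1||E_2f_2||E_3f_3|$. The resulting object $\bigl(\int_\eta|\mathcal{E}_3^\eta f_3^\eta|^2\,\ud\eta\bigr)^{1/2}$ is no longer a single extension operator, so the Kakeya--Brascamp--Lieb inequality cannot then be invoked on it directly; nor does H\"older help, since that would require linear $L^3$ restriction bounds for $E_1$, $E_2$ which are unavailable. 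The paper instead applies Fubini \emph{first} to unfold the trilinear form as $\int_{-\mu}^{\mu}\int_{B(0,R)}|E_1f_1||E_2f_2||E_{3,r}f_{3,r}|\,\ud x\,\ud t\,\ud r$, applies the fiberwise estimate for each fixed $r$, and only \emph{then} applies Cauchy--Schwarz in $r$ on the right-hand side in the form $\int_{-\mu}^{\mu}\|f_{3,r}\|_2\,\ud r \leq \mu^{1/2}\bigl(\int\|f_{3,r}\|_2^2\,\ud r\bigr)^{1/2}$. This reordering is what makes the argument close. Aside from this, your parametrisation by level sets $\{u=\eta\}$ is a benign variant of the paper's graph coordinate $r$, and you have the right ingredients; but as stated the pointwise Cauchy--Schwarz step blocks the application of the fiberwise restriction theorem and must be deferred to the end.
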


Here the implied constant depends on the choice of maps $Q_j$ and, in particular, the lower bound in \eqref{eq: trans}, but is (crucially) independent of the choice of parameter $\mu$ in \eqref{eq: localisation} and the choice of scale $R$.

If we consider smooth hypersurfaces rather than the $C^{1,1/2}$ class, then Theorem~\ref{thm: local BCT} is a special case of~\cite[Theorem 1.3]{Bejenaru2019}. We expect that the arguments of~\cite{Bejenaru2019} can be generalised to treat $C^{1,\alpha}$ regularity for all $\alpha > 0$. However, in Appendix~\ref{sec: appendix} we observe that Theorem~\ref{thm: local BCT} is a rather direct consequence of the Kakeya--Brascamp--Lieb inequalities from~\cite{BBFL2018} (see also \cite{Zhang2018,Zorin-Kranich2020}).




\section{Initial reductions}\label{sec: init_red}




\subsection{Local smoothing estimates}

The multipliers of interest are of the following form. For $I := [-1,1]$, let $\gamma \colon I \to \R^3$ be a smooth curve and fix $\rho \in C^\infty_c(\R)$ supported in the interior $[1/2,4]$. Given a symbol $a \in C^{\infty}(\widehat{\R}^3\setminus\{0\} \times \R \times \R )$, we define
\begin{equation}\label{eq: m def}
    m_\gamma[a](\xi; t) \equiv m[a](\xi;t)  := \int_{\R} e^{-i t \inn{\gamma(s)}{\xi}} a(\xi;t; s) \psi_I(s)\rho(t)\,\ud s,
\end{equation}
for some $\psi_I \in C^{\infty}(\R)$ with support lying in $I$. Fix $\eta \in C^\infty_c(\R)$ non-negative, even and such that
\begin{equation*}
\eta(r) = 1 \quad \textrm{if $r \in I$} \quad \textrm{and} \quad \supp \eta \subseteq [-2,2]
\end{equation*}
and define $\beta$, $\beta^k \in C^\infty_c(\R)$ by
\begin{equation}\label{eq: beta def}
  \beta(r) := \eta(r) - \eta(2r)  \quad \textrm{and} \quad \beta^k(r) := \beta(2^{-k}r) \qquad \textrm{for all $k \in \Z$.}
\end{equation} 
By an abuse of notation, we also write $\eta(\xi) := \eta(|\xi|)$ and $\beta^k(\xi) := \beta^k(|\xi|)$ for $\xi \in \widehat{\R}^3$.

 For $a \in C^{\infty}(\widehat{\R}^3 \setminus \{0\} \times \R \times \R)$ as above, we form a dyadic decomposition by writing
\begin{equation}\label{symbol dec}
    a = \sum_{k = 0}^{\infty} a_k \qquad \textrm{where} \qquad  a_k(\xi; t; s) :=   \left\{ \begin{array}{ll}
        a(\xi; t; s) \, \beta^k(\xi) & \textrm{for $k \geq 1$} \\
         a(\xi; t; s) \, \eta(\xi) & \textrm{for $k =0$}
     \end{array} \right. .
\end{equation}

With the above definitions, our main result is as follows. 

\begin{theorem}[$L^p \to L^q$ local smoothing]\label{thm: Lp Lq loc smoothing} Let $\gamma:I \to \R^3$ be a smooth curve and suppose $a \in C^{\infty}(\widehat{\R}^3\setminus \{0\} \times \R \times \R)$ satisfies the symbol condition
\begin{equation}\label{eq: LS symbol}
    |\partial_{\xi}^{\alpha}\partial_t^i \partial_s^j a(\xi;t;s)| \lesssim_{\alpha, i, j} |\xi|^{-|\alpha|} \qquad \textrm{for all $\alpha \in \N_0^3$ and $i$, $j \in \N_0$}
\end{equation}
and that
\begin{equation}\label{eq: LS non deg}
    \sum_{j=1}^3|\inn{\gamma^{(j)}(s)}{\xi}| \gtrsim |\xi| \qquad \text{ for all $(\xi;s) \in \xisupp a \times I$}.
\end{equation}
Then for all $(1/p, 1/q) \in \mathrm{int}(\cT)$ there exists some $\varepsilon(p,q) > 0$ such that
\begin{equation*}
    \Big(\int_1^2\|m[a_k](D;t)f\|_{L^q(\R^3)}^q\,\ud t\Big)^{1/q} \lesssim_{p,q} 2^{-k/q - \varepsilon(p,q) k}\|f\|_{L^p(\R^3)}
\end{equation*}
holds for all $k \in \N_0$, where $a_k$ is defined as in \eqref{symbol dec}. 
\end{theorem}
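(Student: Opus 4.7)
The plan is to reduce Theorem~\ref{thm: Lp Lq loc smoothing} at the critical vertex $(1/4, 1/6)$ to a trilinear interpolation of three vertex estimates, and then fill out the interior of $\cT$ by convex interpolation with the known diagonal bounds of~\cite{BGHS-helical, KLO2022}. First I would split $a_k$ into a \emph{non-degenerate} piece, supported where one of the three derivatives $\partial_s^j\phi$ ($j=1,2,3$) strictly dominates the others, and a \emph{degenerate} piece, supported where $\partial_s\phi$ and $\partial_s^2\phi$ are simultaneously small (this is the dichotomy implicit in \eqref{eq: LS non deg}). On the non-degenerate piece a direct integration-by-parts and stationary phase analysis furnishes off-diagonal bounds with considerable gain (this is the content of \S\ref{sec: non-deg}). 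The degenerate piece is the essential case: in the four-dimensional $(\xi,\tau)$-frequency space, its multiplier is concentrated in an anisotropic neighbourhood of the $2$-dimensional cone $\Gamma_2$, which sits as a codimension-one submanifold inside the $3$-dimensional cone $\Gamma_3$.

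For the degenerate piece I would prove three local-smoothing-type estimates: a \emph{trilinear} $L^2\to L^3$ estimate at $(1/p,1/q) = (1/2, 1/3)$, a linear $L^{12}\to L^{12}$ estimate, and the trivial $L^2\to L^\infty$ estimate. A direct calculation shows that $(1/4, 1/6)$ is the convex combination of $(1/2, 1/3)$, $(1/12, 1/12)$, $(1/2, 0)$ with weights $(7/20, 3/5, 1/20)$, so trilinear interpolation of these three inequalities produces a frequency-localised bound at $(1/4, 1/6)$; convex interpolation with the known bounds on $\cL$ then yields $\mathrm{int}(\cT)$. The trilinear estimate is the primary use of Theorem~\ref{thm: local BCT}: I would decompose the multiplier into caps on $\Gamma_3$ and select three that are transversal in the $\Gamma_2$-direction, playing the role of $\Sigma_1, \Sigma_2, \Sigma_3$; the defining function $u$ for the embedded hypersurface $\Sigma_3'$ would encode the extra vanishing condition $\partial_s^2\phi = 0$, so that the localisation parameter $\mu$ in \eqref{eq: localisation} measures the actual concentration near $\Gamma_2$. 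This $\mu^{1/6}$ gain is precisely the saving that beats the global trilinear restriction bound and produces the required local smoothing exponent. The $L^{12}\to L^{12}$ estimate I would obtain from $\ell^p$ decoupling for $\Gamma_2$ (after a Whitney-type decomposition in the $s$-variable which places the pieces on disjoint angular sectors), and the $L^2\to L^\infty$ bound is immediate from Plancherel and the uniform bound $\|m[a_k](\cdot;t)\|_{L^\infty} \lesssim 2^{-k/2}$ that follows from stationary phase.

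I expect the main technical obstacle to be the trilinear-to-linear passage carried out in \S\ref{sec: trilin to lin}. A Bourgain--Guth broad--narrow argument adapted to the two-step stratification $\Gamma_2 \subset \Gamma_3$ is needed: in the broad case one invokes the trilinear estimate directly, while in the narrow case the frequencies cluster in a small cap and a rescaling must expose a shifted version of the original operator at a smaller scale so that an induction on scales can close. Balancing the $\varepsilon$-loss from Theorem~\ref{thm: local BCT} against the number of rescaling iterations, and keeping careful track of how the transversality and localisation constants behave under rescaling, appears to be the most delicate step of the argument.
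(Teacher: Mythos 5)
Your proposal matches the paper's proof essentially step-by-step: the reduction to the critical vertex $(1/4,1/6)$ via interpolation with the known diagonal bounds, the degenerate/non-degenerate split on $a_k$, the trilinear interpolation of $(1/2,1/3)$, $(1/12,1/12)$ and $(1/2,0)$ with exactly the weights $(7/20,3/5,1/20)$, the localised Bennett--Carbery--Tao input with the $\mu^{1/6}$ gain encoding the extra vanishing $\partial_s^2\phi=0$, decoupling for the $L^{12}$ estimate, Plancherel for the $L^2\to L^\infty$ bound, and the Bourgain--Guth broad--narrow induction on scales carried out in \S\ref{sec: trilin to lin}. Two minor inaccuracies: the worst-case multiplier sup-norm from stationary phase is $2^{-k/3}$ (third-order van der Corput), not $2^{-k/2}$, and the non-degenerate piece is handled in the paper not by integration by parts alone but by a Greenleaf/Stein--Tomas restriction estimate interpolated against the Pramanik--Seeger local smoothing bound.
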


The desired maximal bound follows from Theorem~\ref{thm: Lp Lq loc smoothing} using a standard Sobolev embedding argument; we omit the details but refer the reader to~\cite[Chapter XI, \S3]{Stein1993},~\cite[\S6]{PS2007} or~\cite[\S 2]{BGHS-helical} for similar arguments. 

By results of~\cite{BGHS-helical}, Theorem~\ref{thm: Lp Lq loc smoothing} is known to hold along the diagonal line $\cL$. By interpolation, it therefore suffices to prove an estimate at the critical vertex $(1/4, 1/6)$ in the Riesz diagram (see Figure~\ref{fig: Riesz}).

\begin{proposition}\label{prop:L4 to L6} Under the hypotheses of Theorem~\ref{thm: Lp Lq loc smoothing}, for $k \in \N_0$ and all $\varepsilon > 0$, we have
\begin{equation}\label{eq: critical est}
    \Big(\int_1^2\|m[a_k](D;t)f\|_{L^6(\R^3)}^6\,\ud t\Big)^{1/6} \lesssim_{\varepsilon} 2^{-k/6 + \varepsilon k}\|f\|_{L^4(\R^3)}.
\end{equation}
\end{proposition}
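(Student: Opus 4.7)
The plan is to establish the critical $L^4 \to L^6$ local smoothing estimate by combining, via a Bourgain--Guth style broad-narrow analysis, the three ingredients indicated in Figure~\ref{fig: Riesz}: the trilinear $L^2 \to L^3$ inequality of Proposition~\ref{prop:L2 L3 trilinear}, the $L^{12} \to L^{12}$ decoupling estimate of Proposition~\ref{prop:L12}, and the trivial $L^2 \to L^\infty$ bound from \S\ref{sec: preliminaries}. A direct convex-combination calculation shows that $(1/4, 1/6)$ lies in the interior of the triangle with these three vertices, so interpolation in principle yields the target exponent; the content is to implement it rigorously in the partly multilinear setting.

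First I would dispose of the non-degenerate portion of the symbol $a_k$ (namely the portion on which some $\inn{\gamma^{(j)}(s)}{\xi}$, $j \leq 2$, is comparable to $|\xi|$), for which \S\ref{sec: non-deg} already provides the required bound. On the remaining, degenerate piece, both $\partial_s \phi$ and $\partial_s^2 \phi$ are small, placing us in the regime where the nested cone geometry $\Gamma_2 \subset \Gamma_3$ from the introduction applies. I would then decompose the $s$-integration in~\eqref{eq: m def} into intervals of length $K^{-1}$, for a parameter $K = 2^{\varepsilon' k}$ to be optimised, giving $m[a_k] = \sum_\ell m_\ell$. A pointwise broad-narrow decision in $(x,t)$ then splits the operator into: \emph{(i)} a broad piece, where three of the $|m_{\ell_j}(D)f|$, associated with pairwise $K^{-1}$-separated intervals, contribute comparably, so the transversality hypothesis~\eqref{eq: trans} is met at scale $K^{-1}$ and Proposition~\ref{prop:L2 L3 trilinear} applies (with a controlled power of $K$ absorbed into the constant); and \emph{(ii)} a narrow piece, where the mass concentrates on $O(1)$ clusters of intervals, and each surviving $m_\ell(D)f$ is frequency-localised so tightly that a parabolic rescaling brings us within the scope of Proposition~\ref{prop:L12}. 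Interpolating the latter bound with the trivial $L^2\to L^\infty$ estimate, and summing the $O(K)$ surviving pieces by Hölder, produces the narrow $L^4 \to L^6$ contribution.

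Finally, I would compare the broad and narrow bounds and optimise $K$ in terms of $\varepsilon$. The main obstacle, in my view, will be the bookkeeping: ensuring that the power of $K$ lost in converting the trilinear broad estimate into a linear bound exactly balances the power gained from frequency concentration in the narrow case, so that the two contributions meet precisely at $(1/4, 1/6)$ rather than at a nearby, weaker exponent. One must also verify that the parabolic rescaling used in the narrow analysis preserves the symbol hypotheses~\eqref{eq: LS symbol}--\eqref{eq: LS non deg} and those of Proposition~\ref{prop:L12}. The $\varepsilon$-losses inherent in both decoupling and in Theorem~\ref{thm: local BCT} are of the same character as the permitted $\varepsilon k$ loss on the right-hand side of~\eqref{eq: critical est}, so they are affordable.
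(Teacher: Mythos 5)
Your high-level outline is close to the paper's, but there is a genuine gap in how you propose to handle the \emph{narrow} piece, and a structural misassignment of where the three-way interpolation takes place.

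In the paper, the interpolation of Proposition~\ref{prop:L2 L3 trilinear}, Proposition~\ref{prop:L12} and Lemma~\ref{lem:L2 Linfty} happens \emph{first}, at the trilinear level, producing the trilinear $L^4 \to L^6$ bound of Proposition~\ref{prop:L4 to L6 trilinear}. The broad--narrow decomposition is then used to pass from that trilinear bound to the linear one. You instead assign only the trilinear $L^2 \to L^3$ estimate to the broad piece and propose to control the narrow piece by interpolating $L^{12} \to L^{12}$ with $L^2 \to L^\infty$. That interpolation alone cannot reach $(1/4,1/6)$: the segment joining $(1/12, 1/12)$ and $(1/2, 0)$ meets the vertical line $1/p = 1/4$ at $(1/4, 1/20)$, and since $\R^{3+1}$ has infinite spatial measure an $L^4 \to L^{20}$ bound does not upgrade to $L^4 \to L^6$, even with frequency localisation of the inputs. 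The vertex $(1/2,1/3)$ enters the convex representation of $(1/4,1/6)$ with the positive weight $7/20$ and cannot be discarded; it is needed in the narrow analysis as well.

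What the paper actually does for the narrow piece is quite different, and this is the idea your proposal is missing: it runs an \emph{induction on the scale} $k$. The decomposition parameter $\delta \in (0,1)$ is chosen fixed (depending only on $\varepsilon$), not growing like your $K = 2^{\varepsilon' k}$. Each narrow summand $U_k^J f$ is frequency-projected, then rescaled via the parabolic rescaling of \S\ref{subsec: scaling}; by \eqref{eq: resc symb supp} this maps the frequency annulus $|\xi| \sim 2^k$ to $|\xi| \sim \delta^3 2^k < 2^{k-1}$, so the \emph{induction hypothesis} (the full linear $L^4 \to L^6$ estimate with a fixed constant $\bC_\varepsilon$, at a strictly smaller scale) applies. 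Crucially, this rescaling produces a multiplicative gain of $\delta^{3\varepsilon}$, which is what closes the induction once $\delta$ is chosen so small that $C_\varepsilon \delta^{3\varepsilon} \leq 1/4$. This induction-on-scale step is the substitute for your narrow-piece interpolation, and it cannot be replaced by anything weaker, because after rescaling the narrow piece is again an operator of the same type as the one we started with. A $k$-dependent $K$ would force you into an iterative broad--narrow scheme of $O(1/\varepsilon')$ stages, which is in effect the same induction in a less convenient form.
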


By the preceding discussion, our main theorem follows from Proposition~\ref{prop:L4 to L6}. Henceforth, we focus on the proof of this critical estimate.




\subsection{Trilinear reduction}\label{subsec: trilinear red} 
If the hypothesis \eqref{eq: LS non deg} is strengthened to
\begin{equation}\label{eq: super LS non deg}
    |\inn{\gamma'(s)}{\xi}| + |\inn{\gamma''(s)}{\xi}| \gtrsim |\xi| \qquad \textrm{for all $(\xi;s) \in \xisupp a \times I$,}
\end{equation}
then one can deduce the critical estimate \eqref{eq: critical est} as a consequence of known local smoothing inequalities from~\cite{PS2007} (see Theorem~\ref{PS LS J=2} below) and the Stein--Tomas Fourier restriction inequality. Given a small number $0 < \delta < 1$ and $k \in \N$, we perform this analysis on the symbols
\begin{equation}\label{eq: ak0 def}
    a_{k,0}(\xi;t;s):=a_k(\xi;t;s) \big(1-\eta (2^{-k}\delta^{-20} G_2(s;\xi)) \big)
\end{equation}
where $G_2(s;\xi):=\sum_{j=1}^2 |\inn{\gamma^{(j)}(s)}{\xi}|$; note that $a_{k,0}$  satisfies \eqref{eq: super LS non deg} with an implicit constant depending on $\delta$.  We discuss this case in detail in \S\ref{sec: non-deg}. 

The main difficulty is then to get to grips with the degenerate portion of the multiplier. For the above choice of $0<\delta<1$, this corresponds to the condition
\begin{equation}\label{eq: deg case}
    |\inn{\gamma'(s)}{\xi}| + |\inn{\gamma''(s)}{\xi}| \lesssim \delta^{20}|\xi|\qquad \textrm{for all $(\xi;s) \in \xisupp a \times I$;}
\end{equation}
note that this is satisfied on the support of $\mathfrak{a}_k := a_k-a_{k,0}$. To control the degenerate part, we work with a trilinear variant of Proposition~\ref{prop:L4 to L6}, from which we deduce the corresponding linear estimate \eqref{eq: critical est} via a standard application of the \textit{broad-narrow} method from~\cite{Bourgain2011} (see also~\cite{Ham2014}).  

To describe the trilinear setup, we introduce some notation. For $0 < \delta < 1$ as above, let $\fJ(\delta)$ denote a covering of $I$ by essentially disjoint intervals of length $\delta$. Let $\fJ^{3, \mathrm{sep}}(\delta)$ denote the collection of all triples $\bJ = \{J_1, J_2, J_3\} \subset \fJ(\delta)$ which satisfy the separation condition $\dist(J_i, J_j) \geq 10\delta$ for $1 \leq i < j \leq 3$. Given a bounded interval $J \subseteq \R$, we let $\psi_J \in C^{\infty}_c(\R)$ satisfy $\supp \psi_J \subseteq J$ and $|\partial^N_s\psi_J(s)| \lesssim_N |J|^{-N}$ for all $N \in \N$. Similarly to \eqref{eq: m def}, given  a symbol $a \in C^{\infty}(\widehat{\R}^3\setminus\{0\} \times \R \times \R )$, we define the multipliers adapted to an interval $J \in \mathfrak{J}(\delta)$ by
\begin{equation*}
m^J_\gamma[a](\xi; t) \equiv m^J[a](\xi;t)  := \int_{\R} e^{-i t \inn{\gamma(s)}{\xi}} a(\xi;t; s) \psi_J (s) \rho(t)\,\ud s.
\end{equation*}
With this setup, we prove the following estimate.

\begin{proposition}[$L^4 \to L^6$ trilinear local smoothing] \label{prop:L4 to L6 trilinear} Let $k \in \N_0$, $\varepsilon > 0$ and $\delta > 0$. Under the hypotheses of Theorem~\ref{thm: Lp Lq loc smoothing} and further assuming \eqref{eq: deg case}, we have
\begin{equation*}
    \Big(\int_1^2\Big\| \prod_{J \in \bJ}|m^J[a_k](D;t)f_J|^{1/3}\Big\|_{L^6(\R^3)}^6\,\ud t\Big)^{1/6} \lesssim_{\varepsilon} \delta^{-O(1)} 2^{-k/6 + \varepsilon k}\prod_{J \in \bJ} \|f_J\|_{L^4(\R^3)}^{1/3}
\end{equation*}
whenever $\bJ \in \fJ^{3, \mathrm{sep}}(\delta)$ and $f_J \in \Sc(\R^3)$ for $J \in \bJ$. 
\end{proposition}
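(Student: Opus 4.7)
The strategy, as signalled by the vertex labelling in Figure~\ref{fig: Riesz}, is to obtain the bound at $(1/p, 1/q) = (1/4, 1/6)$ by multilinear interpolation between three trilinear endpoint estimates at $(1/2, 1/3)$, $(1/12, 1/12)$ and $(1/2, 0)$. Writing $T_J^k := m^J[a_k](D;\cdot)$, regarded as an operator from $L^p(\R^3)$ into $L^q([1,2]\times\R^3)$ with respect to $\mathrm{d}t\,\mathrm{d}x$, I collect three ingredients:
\begin{enumerate}
\item[(i)] the trilinear $L^2 \to L^3$ local smoothing estimate of Proposition~\ref{prop:L2 L3 trilinear}, proved in \S\ref{sec: trilinear} via the localised trilinear restriction Theorem~\ref{thm: local BCT};
\item[(ii)] the linear $L^{12} \to L^{12}$ local smoothing estimate of Proposition~\ref{prop:L12}, proved in \S\ref{sec: L12 loc smoothing} by $\ell^2$-decoupling for the two-dimensional cone $\Gamma_2$;
\item[(iii)] the trivial $L^2 \to L^\infty$ bound, arising from Plancherel and Cauchy--Schwarz together with size and frequency-support information on $m^J[a_k](\cdot;t)$ (in particular $|\xisupp m^J[a_k](\cdot;t)|\lesssim 2^{3k}$ and the pointwise estimates furnished by stationary phase).
\end{enumerate}
The linear bounds (ii) and (iii) upgrade to their trilinear counterparts with no loss in constant via H\"older's inequality, namely $\|\prod_{J\in\bJ}|g_J|^{1/3}\|_{L^r}\leq\prod_{J\in\bJ}\|g_J\|_{L^r}^{1/3}$; all three ingredients then have the same trilinear shape as that appearing on the left hand side of the proposition.

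With these three trilinear endpoint estimates in hand, I apply multilinear complex interpolation (in the Bergh--L\"ofstr\"om framework, or equivalently via Stein's interpolation theorem for analytic families, suitably adapted to the geometric-mean expression $\prod_{J\in\bJ}|T_J^k f_J|^{1/3}$). For convex weights $\theta_1, \theta_2, \theta_3 \geq 0$ with $\theta_1 + \theta_2 + \theta_3 = 1$, this yields a trilinear bound at the interpolated exponents $1/p = \sum_i \theta_i/p_i$ and $1/q = \sum_i \theta_i/q_i$, with constant $\prod_i M_i^{\theta_i}$, where $M_i$ denotes the $i$-th endpoint constant. The unique weights placing the interpolated point at $(1/4, 1/6)$ are $\theta_1 = 7/20$, $\theta_2 = 12/20$, $\theta_3 = 1/20$, obtained by solving the corresponding $3\times 3$ linear system. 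The decay rates stated in ingredients (i)--(iii) are tailored precisely so that $\prod_i M_i^{\theta_i}$ consolidates to $\delta^{-O(1)}\,2^{-k/6 + \varepsilon k}$ after a short arithmetic check; the $\delta^{-O(1)}$ factor enters only through ingredient (i).

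The main difficulty lies not in the interpolation above, which is routine, but rather in the two substantive endpoint estimates. Ingredient (i) is the novel core of the paper: it leverages Bejenaru's localised multilinear restriction inequality to simultaneously exploit the geometries of the cones $\Gamma_2$ and $\Gamma_3$, and makes essential use of the degeneracy hypothesis \eqref{eq: deg case} (which forces the relevant frequencies to cluster near $\Gamma_2$, embedded as a localised portion of $\Gamma_3$). Ingredient (ii) follows the cone-decoupling approach pioneered by Pramanik--Seeger. The only conceptual subtlety within the present proposition is that $\prod_{J\in\bJ}|T_J^k f_J|^{1/3}$ is a geometric mean rather than an honest linear form in any one $f_J$, but the analytic-families approach accommodates such multiplicative expressions without difficulty.
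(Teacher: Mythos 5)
Your outline captures the right skeleton — the three ingredients, the same convex weights $(\tfrac{3}{5},\tfrac{7}{20},\tfrac{1}{20})$ at $(\tfrac1{12},\tfrac1{12})$, $(\tfrac12,\tfrac13)$, $(\tfrac12,0)$, and multilinear Riesz--Thorin to reach $(\tfrac14,\tfrac16)$ — but you have elided the microlocal decomposition in $\ell$, and this is not a cosmetic omission: without it, the three ingredients you cite do not apply to the operator $T_J^k = m^J[a_k](D;\cdot)$ you propose to interpolate. Propositions~\ref{prop:L2 L3 trilinear}, \ref{prop:L12} and Lemma~\ref{lem:L2 Linfty} are all stated for the pieces $m^J[a_{k,\ell}]$ arising from the decomposition \eqref{eq: a_kl dec}, with rates $2^{-k/3 + |\ell_{\bJ}|/18 + \varepsilon k}$, $2^{-k/6 - \ell/12 + \varepsilon k}$, and $2^{k - \ell/2}$ respectively; there is no direct bound on $m^J[a_k]$ to feed into the interpolation. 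The paper's proof interpolates these $\ell$-dependent estimates, obtaining for each fixed $\ell_{\bJ}$ the bound $\delta^{-O(1)}2^{-k/6 - |\ell_{\bJ}|/180 + \varepsilon k}$, and then \emph{sums} over the triples $\ell_{\bJ} \in \overline{\Lambda}(k)^3$, using the geometric decay $2^{-|\ell_{\bJ}|/180}$ to recover the stated bound for $a_k$. You never produce this decay, and the summation step is absent from your proposal.

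The omission matters quantitatively and not just expositionally. In your ingredient (iii) you write $|\xisupp m^J[a_k](\cdot;t)|\lesssim 2^{3k}$, which, combined with the best uniform stationary-phase bound $\|m[a_k](\cdot;t)\|_\infty\lesssim 2^{-k/3}$ (van der Corput with third-order derivatives, the worst case under \eqref{eq: deg case}), gives only $\|m[a_k](D;t)f\|_\infty\lesssim 2^{7k/6}\|f\|_2$. Interpolating this with the unmodified targets $2^{-k/3}$ and $2^{-k/6}$ at your weights yields a loss: the $k$-exponent becomes $\tfrac35(-\tfrac16)+\tfrac{7}{20}(-\tfrac13)+\tfrac1{20}(\tfrac76) = -\tfrac{6}{60}-\tfrac{7}{60}+\tfrac{7}{120} = -\tfrac{19}{120}$, which is strictly larger than $-\tfrac16 = -\tfrac{20}{120}$, so the claimed rate $2^{-k/6+\varepsilon k}$ does not follow. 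The $\ell$-localisation is precisely what rescues this: on $\supp a_{k,\ell}$ the frequency support shrinks to $|\supp m[a_{k,\ell}](\cdot;t)|\lesssim 2^{3k-2\ell}$ and van der Corput upgrades to $\|m[a_{k,\ell}](\cdot;t)\|_\infty\lesssim 2^{-(k-\ell)/2}$, giving the rate $2^{k-\ell/2}$ in Lemma~\ref{lem:L2 Linfty}. It is the interplay between the $\ell$-improvement in (ii) and (iii) and the $\ell$-\emph{degradation} (the $+|\ell_{\bJ}|/18$) in (i) that produces a net gain of $2^{-|\ell_{\bJ}|/180}$ after interpolation, enabling the geometric sum. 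You should add this decomposition, interpolate at the level of the $a_{k,\ell}$, and close with the $\ell$-summation to complete the argument.
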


Here we use the notation $O(1)$ to denote an unspecified absolute constant. In applications, we work with relatively large values of $\delta$ (namely, $\delta \sim_{\varepsilon} 1$) and accordingly there is no need to precisely track the $\delta$ dependence. We will also assume without loss of generality that $0 < \delta < c$ where $c > 0$ is a small absolute constant, chosen to satisfy the forthcoming requirements of the argument, and $k$ is sufficiently large depending on $\delta^{-1}$. 

As mentioned above, the (ostensibly weaker) trilinear estimate in Proposition~\ref{prop:L4 to L6 trilinear} implies the linear estimate in Proposition~\ref{prop:L4 to L6} (under the additional assumption \eqref{eq: deg case}) using a variant of the procedure introduced in~\cite{Bourgain2011}. We postpone the details of this reduction to \S\ref{sec: trilin to lin} below.




\subsection{Reduction to perturbations of the moment curve}\label{subsec: pert moment}

At small scales, any non-degenerate curve can be thought of as a perturbation of an affine image of the moment curve $\gamma_{\circ}(s) :=(s, s^2/2, s^3/6)$. We refer to~\cite[\S 4]{BGHS-helical} for details (which involve the affine rescalings described in \S\ref{subsec: scaling} below), and just record here that it suffices to consider curves in the class $\fG(\delta_0)$ defined below for $0 < \delta_0 <1$ sufficiently small.  

\begin{definition} Given $0 < \delta_0 < 1$ and $M \in \N$, let $\fG(\delta_0, M)$ denote the class of all smooth curves $\gamma \colon I \to \R^3$ that satisfy the following conditions: 
\begin{enumerate}[i)]
    \item $\gamma(0) = 0$ and $\gamma^{(j)}(0) = \vec{e}_j$ for $1 \leq j \leq 3$;
    \item $\|\gamma - \gamma_{\circ}\|_{C^M(I)} \leq \delta_0$ for all $0 \leq j \leq M$.
\end{enumerate}
Here $\vec{e}_j$ denotes the $j$th standard Euclidean basis vector and
\begin{equation*}
    \|\gamma\|_{C^M(I)} := \max_{1 \leq j \leq M} \sup_{s \in I} |\gamma^{(j)}(s)| \qquad \textrm{for all $\gamma \in C^M(I;\R^3)$.}
\end{equation*}
If $M = 4$, then we will simply write $\fG(\delta_0)$ for $\fG(\delta_0, 4)$
\end{definition}

Henceforth we will always assume that $\gamma \in \fG(\delta_0)$ for $\delta_0 := 10^{-10}$.




\subsection{Microlocal decomposition}\label{subsec: microloc dec}

Under the assumption \eqref{eq: deg case}, the non-degeneracy condition \eqref{eq:nondegenerate} ensures that
\begin{equation}\label{eq: 3rd order}
    |\inn{\gamma'''(s)}{\xi}| \gtrsim |\xi| \quad \text{ for all $(\xi;t;s) \in \supp a$;}
\end{equation}
indeed, for $\gamma \in \fG(\delta_0)$ this condition holds for all $(\xi;t;s) \in \xisupp a \times \R \times I$. We can then assume that $s \mapsto \inn{\gamma'''(s)}{\xi}$ has constant sign and henceforth we assume that $\xi_3>0$. 
Following~\cite[\S 6]{BGHS-helical},  let $\theta_2 \colon \xisupp a  \to I$ be the smooth mapping such that
\begin{equation*}
    \inn{\gamma'' \circ \theta_2(\xi)}{\xi} = 0 \qquad \textrm{for all $\xi \in \xisupp a$.}
\end{equation*}
It is clear that $\theta_2$ is homogeneous of degree 0. Let 
\begin{equation}\label{eq: u def}
    u(\xi) := \inn{\gamma' \circ \theta_2(\xi)}{\xi} \qquad \textrm{for all $\xi \in \xisupp a$.}
\end{equation}
Since \eqref{eq: 3rd order} is satisfied on the support of each 
\begin{equation}\label{eq: fa symbols}
    \fa_k:=a_{k}-a_{k,0},
\end{equation}
 we decompose each of these pieces with respect to the size of $|u(\xi)|$. Given $\varepsilon >0$ and $0 < \delta < 1$, we write
\begin{equation}\label{eq: a_kl dec}
   a_k =  a_{k, 0} + \sum_{\ell \in \Lambda(k)} a_{k,\ell}  + a_{k, k/3}
\end{equation}
where $a_{k,0}$ is as in \eqref{eq: ak0 def} and
\begin{equation}\label{eq: a_kl def}
    a_{k,\ell}(\xi; t; s) := 
    \left\{\begin{array}{ll}
        \displaystyle \fa_k(\xi; t; s) \, \beta\big(2^{-k+ 2\ell}u(\xi)\big) \quad   & \textrm{if $\ell \in \Lambda(k)$,}  \\[8pt]
        \displaystyle \fa_{k}(\xi; t; s)\, \eta\big(2^{-k + 2\floor{(1-2\varepsilon)k/3}}u(\xi)\big) & \textrm{if $\ell = k/3$}
    \end{array}\right.
\end{equation}
for
\begin{equation}\label{eq: def Lambda}
    \Lambda(k):= \{ \, \ell \in \N :  \lceil \log_2( \delta^{-8}) \rceil < \ell < \left\lfloor (1-2\varepsilon)k/3 \right\rfloor \,  \}.
\end{equation}
Here we assume that $k$ is large enough so that the decomposition \eqref{eq: a_kl dec} makes sense; note that Theorem \ref{thm: Lp Lq loc smoothing} trivially holds for small values of $k$. In particular, we concern ourselves with $k \in \N$ satisfying $k \geq 4 \log_2(\delta^{-8})$. In the definition \eqref{eq: def Lambda}, for any $x \in \R$, we let $\floor{x}$  denote the largest integer less or equal than $x$ and $\ceil{x}$ denote the smallest integer greater or equal than $x$. It will also be useful to introduce the notation
\begin{equation*}
    \overline{\Lambda}(k) := \Lambda(k) \cup \{k/3\}.
\end{equation*}
Note that the indexing set $\Lambda(k)$ depends on the chosen $\delta$ and $\varepsilon$, but we do not record this dependence for notational convenience. 
We also note that here the function $\beta$ should be defined slightly differently compared with \eqref{eq: beta def}; in particular, here $\beta(r) := \eta(2^{-2}r) - \eta(r)$ (we ignore this minor change in the notation).

As mentioned in \S\ref{subsec: trilinear red}, for the extreme case $\ell=0$ we have the non-degeneracy condition \eqref{eq: super LS non deg} (with an implied constant depending on $\delta$). This situation is easy to handle using known estimates: see \S\ref{sec: non-deg} below. On the other hand, for $\ell \in \overline{\Lambda}(k)$, the multipliers $m[a_{k,\ell}]$ are degenerate in the sense that \eqref{eq: deg case} now holds. A key aspect of this decomposition is that for $\ell \in \Lambda(k)$, Taylor series expansion shows that
\begin{equation}\label{eq: weak non-deg}
    |\inn{\gamma'(s)}{\xi}| + 2^{-\ell}|\inn{\gamma''(s)}{\xi}| \gtrsim 2^{k-2\ell} \qquad \textrm{for all $(\xi, s) \in \supp a_{k,\ell}(\,\cdot\,;t;\,\cdot\,)$;}
\end{equation}
see, for example~\cite[(5.15)]{BGHS-Sobolev} for a detailed derivation. The weak non-degeneracy condition \eqref{eq: weak non-deg} will allow for improved estimates depending on the value of $\ell$. Bounding these pieces, and the piece for $\ell=k/3$, is the difficult part of the argument and is the focus of \S\S\ref{sec: trilinear}--\ref{sec: L12 loc smoothing} below.




\subsection{Microlocalised estimates}\label{subsec: microloc est} Throughout this section we work under the hypotheses of Theorem~\ref{thm: Lp Lq loc smoothing} and, in addition, assume \eqref{eq: deg case} holds for a specified value of $\delta$. That is, we let $\gamma: I \to \R^3$ be a smooth curve and suppose $a \in C^{\infty}(\widehat{\R}^3\setminus \{0\} \times \R \times \R)$ satisfies \eqref{eq: LS symbol}, \eqref{eq: LS non deg} and \eqref{eq: deg case}. Furthermore, we define the symbols $a_{k,\ell}$ as in \eqref{eq: a_kl def}.

The key ingredient in the proof of Proposition~\ref{prop:L4 to L6 trilinear} is a trilinear estimate for the multipliers associated to the localised symbols $a_{k,\ell}$. To describe this result, we work with a triple of integers $\ell_{\bJ} = (\ell_J)_{J \in \bJ}$ indexed by $\bJ \in \fJ^{3, \mathrm{sep}}(\delta)$ and write $|\ell_{\bJ}| := \sum_{J \in \bJ} \ell_J$. 

\begin{proposition}[$L^{2} \to L^{3}$ trilinear local smoothing]\label{prop:L2 L3 trilinear}
    For $k \in \N$, $\varepsilon>0$ and $0<\delta < 1$, we have 
\begin{equation*}
    \Big(\int_1^2\Big\| \prod_{J \in \bJ}|m^J[a_{k,\ell_J}](D;t)f_J|^{1/3}\Big\|_{L^3(\R^3)}^3\,\ud t\Big)^{1/3} \lesssim_{\varepsilon} \delta^{-O(1)} 2^{-k/3 + |\ell_{\bJ}|/18 + \varepsilon k}\prod_{j=1}^3 \|f_J\|_{L^2(\R^3)}^{1/3}
\end{equation*}
whenever $\bJ \in \fJ^{3, \mathrm{sep}}(\delta)$, $\ell_{\bJ} = (\ell_J)_{J \in \bJ}$ with $ \ell_J \in \overline{\Lambda}(k)$ and $f_J \in \Sc(\R^3)$ for $J \in \bJ$. 
\end{proposition}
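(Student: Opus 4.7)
The plan is to realise each multiplier $m^J[a_{k,\ell_J}](D;t)f_J$ as (essentially) a Fourier extension operator associated to a portion of the three-dimensional cone $\Gamma_3^J \subset \widehat{\R}^4$ cut out in $(\xi,\tau)$-space by the stationary phase condition $\tau = -\inn{\gamma(s)}{\xi}$ with $\partial_s\inn{\gamma(s)}{\xi} = 0$ for some $s \in J$, and then to apply Theorem~\ref{thm: local BCT}. The extra microlocal localisation imposed by $a_{k,\ell_J}$ to $|u(\xi)| \sim 2^{k-2\ell_J}$ forces the Fourier support into a narrow neighbourhood of the embedded two-dimensional cone $\Gamma_2^J \subset \Gamma_3^J$ (the surface where $u = 0$), which is exactly the configuration that the hypothesis \eqref{eq: localisation} in Theorem~\ref{thm: local BCT} is designed to exploit: one takes $Z_3 = \{u = 0\}$ and sets the localisation parameter $\mu \sim 2^{-2\ell_J}$ after an isotropic rescaling $\xi \mapsto 2^{-k}\xi$ to unit frequency scale, so that the $R^\varepsilon$ factor from Theorem~\ref{thm: local BCT} (with $R \sim 2^k$) corresponds to the admissible $2^{\varepsilon k}$ loss.

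Concretely, after Fourier inversion in $(x,t)$ and a non-degenerate stationary phase in $s$, each $m^J[a_{k, \ell_J}](D;t)f_J$ can be rewritten as $E_J g_J$, where $Q_J$ parametrises $\Gamma_3^J$ as a graph over $\xi$-space, and $g_J$ differs from $\hat{f}_J$ by a (smooth, bounded) Jacobian factor so that $\|g_J\|_{L^2} \sim \|f_J\|_{L^2}$ up to harmless constants. The transversality \eqref{eq: trans} becomes a lower bound on the volume of the parallelepiped spanned by $\gamma'(s_1), \gamma'(s_2), \gamma'(s_3), \gamma''(s_3)$ together with a temporal direction; by non-degeneracy \eqref{eq:nondegenerate} and the separation $\dist(J_i,J_j) \geq 10\delta$, this volume is at least $\delta^{O(1)}$. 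The $Q_J$ are only of class $C^{1,1/2}$ near $\Gamma_2^J$, which is precisely why Theorem~\ref{thm: local BCT} has been formulated at this regularity. A single application of Theorem~\ref{thm: local BCT} exploits the $|u| \lesssim \mu$ localisation on only one of the three surfaces, producing a gain $\mu^{1/6}$; to recover the symmetric factor $2^{|\ell_{\bJ}|/18}$ in the claim, I would apply Theorem~\ref{thm: local BCT} three times, cyclically permuting which of $f_{J_1}, f_{J_2}, f_{J_3}$ plays the role of $f_3$, and then take the geometric mean via H\"older. The combined factor $(\mu_1 \mu_2 \mu_3)^{1/18} \sim 2^{-(\ell_{J_1}+\ell_{J_2}+\ell_{J_3})/9}$, merged with the stationary-phase weight $|\inn{\gamma''(s)}{\xi}|^{-1/2}$ arising from the change of variables $(\xi, s) \mapsto (\xi, \tau)$ on each $\Gamma_3^J$, should reassemble into the claimed power $2^{-k/3 + |\ell_{\bJ}|/18}$.

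The main obstacle is to control the parametrisation of $\Gamma_3^J$ uniformly in $\ell_J$: as $\ell_J$ grows, the weak non-degeneracy \eqref{eq: weak non-deg} deteriorates and the $C^{1,1/2}$ constants of the rescaled $Q_J$ threaten to blow up. Verifying that after rescaling (and possibly a finer dyadic decomposition of the $u$-localised slab) these constants remain polynomial in $\delta^{-1}$, so that the implicit constants in Theorem~\ref{thm: local BCT} stay harmless, is the delicate part. A secondary subtlety is the endpoint case $\ell_J = k/3$, where the symbol is supported in a region where the phase is maximally degenerate; here one must argue that the preceding stationary phase reduction still yields a valid extension-operator reformulation up to the acceptable $2^{\varepsilon k}$ loss.
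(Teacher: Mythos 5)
Your overall strategy -- stationary phase to pass from the multipliers $m^J[a_{k,\ell_J}]$ to extension operators associated to portions of the cones $\Gamma_3^J$, followed by an application of Theorem~\ref{thm: local BCT} using the $u$-localisation $|u(\xi)| \sim 2^{k-2\ell_J}$ to engage the parameter $\mu$ -- is exactly the paper's. The rescaling to unit frequency, $R \sim 2^k$, and the factor bookkeeping ($\mu^{1/6}$ combined with the stationary-phase weight $2^{-(k-\ell_J)/2}$ from each piece) all match. Your variant of applying Theorem~\ref{thm: local BCT} three times by cycling which surface carries the localisation, then taking a geometric mean to obtain $(\mu_1\mu_2\mu_3)^{1/18} = 2^{-|\ell_{\bJ}|/9}$, is a sound alternative; the paper achieves the same factor slightly more economically by applying the theorem once with the localisation on the surface with the largest $\ell_J$ (say $\ell_3 = \max_j \ell_j$), observing that $2^{-\ell_3/3} \leq 2^{-|\ell_{\bJ}|/9}$ because $\ell_3 \geq |\ell_{\bJ}|/3$. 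Both routes give the same final exponent.

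Two corrections. First, you misstate the transversality determinant: since $\nabla q_1^{\pm}(\xi) = \gamma\circ\theta_1^{\pm}(\xi)$ (with the zeroth derivative of $\gamma$, not the first), and $\nabla u(\xi) = \gamma'\circ\theta_2(\xi)$, the relevant $4\times4$ matrix has columns built from $\gamma(s_1),\gamma(s_2),\gamma(s_3),\gamma'(s_3)$ (with a bottom row $(1,1,1,0)$), not from $\gamma'(s_j)$ and $\gamma''(s_3)$. The paper then shows this determinant is comparable to $|V(s_1,s_2,s_3)||s_1-s_3||s_2-s_3|$, which by the $10\delta$-separation is $\gtrsim \delta^5$. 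You would need the analogous estimate for each cyclic permutation, but this follows by the same Vandermonde computation. Second, the "delicate part" you flag -- controlling the $C^{1,1/2}$ constants of $q_1^{\pm}$ uniformly in $\ell$ -- turns out not to require any finer dyadic decomposition: the paper's Lemma~\ref{lem: C 1 1/2} shows the Hölder-$1/2$ constant is uniformly $O(1)$, by combining the fundamental-theorem-of-calculus bound $|\theta_1^{\pm}(\xi_1)-\theta_1^{\pm}(\xi_2)| \lesssim 2^{\ell}|\xi_1-\xi_2|$ (using $|\nabla\theta_1^\pm| \sim 2^\ell$) with the opposing bound $|\theta_1^{\pm}(\xi_1)-\theta_1^{\pm}(\xi_2)| \lesssim 2^{-\ell} + |\xi_1-\xi_2|$ from $|\theta_1^\pm - \theta_2| \lesssim 2^{-\ell}$ (Lemma~\ref{lem: root}); the minimum $\min\{2^{-\ell}, 2^{\ell}x\} \leq x^{1/2}$ closes the gap you identify. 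Without this lemma, your proposal is incomplete, since the implicit constant in Theorem~\ref{thm: local BCT} depends on the regularity constants.
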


Proposition~\ref{prop:L2 L3 trilinear} is a fairly direct consequence of Theorem~\ref{thm: local BCT}; we describe the proof in \S\ref{sec: trilinear} below. To deduce the critical $L^4 \to L^6$ estimate stated in Proposition~\ref{prop:L4 to L6 trilinear}, we interpolate Proposition~\ref{prop:L2 L3 trilinear} with the following linear inequalities. 

\begin{proposition}[$L^{12} \to L^{12}$ local smoothing]\label{prop:L12}
For $k \in \N$, $\varepsilon>0$, $0 <\delta < 1$  and $ \ell \in \overline{\Lambda}(k)$, we have
\begin{equation*}
    \Big(\int_1^2 \| m[a_{k,\ell }](D;t)f \|_{L^{12}(\R^3)}^{12}\,\ud t\Big)^{1/12} \lesssim_{\varepsilon}  2^{-k/6 -\ell/12 + \varepsilon k}\|f\|_{L^{12}(\R^3)}.
\end{equation*}    
\end{proposition}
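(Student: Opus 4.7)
The plan is to reduce Proposition~\ref{prop:L12} to the Bourgain--Demeter--Guth decoupling inequality for the moment curve in $\R^3$, following the methodology developed in \cite{PS2007}, \cite{BGHS-Sobolev}, and \cite{KLO2023}. First, I would subdivide the $s$-integration defining $m[a_{k,\ell}]$ into essentially disjoint intervals $J$ of length $\delta_k \sim 2^{-k/3}$, the critical scale for moment-curve decoupling in $\R^3$. Writing $G_J(x,t) := m^J[a_{k,\ell}](D;t)f(x)\rho(t)$, the quantity to bound becomes $\big\|\sum_J G_J\big\|_{L^{12}(\R^4)}$.

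Next, I would verify that each $G_J$ is essentially Fourier-supported in a plate $P_J \subset \widehat\R^4$ approximating a tangent slab to the cone $\Gamma_3$ over the moment curve at scale $2^{-k/3}$, further pinched in one direction by the localisation $|u(\xi)| \sim 2^{k-2\ell}$ enforced by $a_{k,\ell}$. Since these plates fit into the decoupling framework of \cite{BDG2016}, applying decoupling at the critical exponent $p = 12$ would yield
\[
\Big\|\sum_J G_J\Big\|_{L^{12}(\R^4)} \lesssim_\varepsilon 2^{\varepsilon k}\Big(\sum_J \|G_J\|_{L^{12}(\R^4)}^2\Big)^{1/2}.
\]
At the degenerate boundary $\ell = k/3$ the plates collapse onto the thinner cone $\Gamma_2$, but the same decoupling exponent still applies.

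Each individual piece $G_J$ would then be estimated by combining a Bernstein-type inequality on the plate $P_J$ with the pointwise stationary-phase bound on the multiplier. Using the weak non-degeneracy \eqref{eq: weak non-deg} together with $|J| \sim 2^{-k/3}$ and the $\ell$-dependent localisation, one should obtain the single-plate bound
\[
\|G_J\|_{L^{12}(\R^4)} \lesssim 2^{-k/6 - \ell/12 + \varepsilon k}\|f_J\|_{L^{12}(\R^3)},
\]
where $f_J$ denotes the restriction of $f$ to the $\xi$-projection of $P_J$. Since these $\xi$-projections have bounded overlap, a square-function argument would then give $\big(\sum_J \|f_J\|_{L^{12}}^2\big)^{1/2} \lesssim_\varepsilon 2^{\varepsilon k}\|f\|_{L^{12}}$, concluding the proof.

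The main obstacle will be precisely tracking the $\ell$-dependence in the single-plate estimate above: one must separate the regime where $\inn{\gamma'(s)}{\xi}$ dominates from the regime where $\inn{\gamma''(s)}{\xi}$ dominates in \eqref{eq: weak non-deg}, and carefully compute the plate dimensions in each case so as to extract the sharp factor $2^{-\ell/12}$. A secondary technical point is verifying the applicability of the BDG decoupling to the cone-over-curve plate geometry specific to this problem, especially at the boundary $\ell = k/3$ where $\Gamma_3$ effectively degenerates to $\Gamma_2$.
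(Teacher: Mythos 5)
There is a genuine gap here, and it sits precisely at the point you flag as a ``main obstacle'': Bernstein plus stationary phase does not yield the required single-piece bound when $\ell<k/3$. The paper decomposes at scale $2^{-\ell}$ (localising $\theta_2(\xi)$ into intervals of this length), not at $2^{-k/3}$; at that scale each piece $a_{k,\ell}^\mu$ is, after an affine rescaling by $[\gamma]_{s_\mu,2^{-\ell}}$, a $2$\textit{-non-degenerate} symbol living at frequency $\sim 2^{k-3\ell}$, and the single-piece estimate $\|m[a_{k,\ell}^\mu](D;\cdot)f\|_{L^{12}(\R^{3+1})}\lesssim 2^{O(\varepsilon k)}2^{-(k-3\ell)/6-\ell}\|f\|_{L^{12}}$ is obtained by invoking the Pramanik--Seeger $L^p$ local-smoothing theorem (Theorem~\ref{PS LS J=2}), which is itself a non-trivial decoupling-based result for quadratically degenerate averages. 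This $2^{-2(k-3\ell)/p}$ gain is not attainable from a Bernstein/$L^\infty$ multiplier bound alone, and without it the exponent count does not close for $\ell<k/3$. (For $\ell=k/3$ the inner estimate does collapse to an elementary one, which is why your scheme happens to look consistent in that boundary case.)

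There is a second, independent problem in the last step. You write the decoupling in $\ell^2$ form, $\|\sum_J G_J\|_{L^{12}}\lesssim_\varepsilon 2^{\varepsilon k}(\sum_J\|G_J\|_{L^{12}}^2)^{1/2}$, and then claim $(\sum_J\|f_J\|_{L^{12}}^2)^{1/2}\lesssim_\varepsilon 2^{\varepsilon k}\|f\|_{L^{12}}$. The latter is false: with $N\sim 2^{k/3}$ almost-disjoint frequency pieces, the best one can say in general is $(\sum_J\|f_J\|_{L^{12}}^2)^{1/2}\lesssim N^{1/2-1/12}\|f\|_{L^{12}}$, a loss of $2^{5k/36}$ that is not an $\varepsilon$-loss. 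The paper sidesteps this by using the decoupling inequality in $\ell^p$ form (Proposition~\ref{J=3 decoupling}, with constant $2^{\ell(1-7/p)}$ and an $\ell^p$ sum on the right), pairing it with the $\ell^p$ orthogonality $(\sum_\mu\|f_{k,\ell}^\mu\|_{L^p}^p)^{1/p}\lesssim\|f\|_{L^p}$ of Lemma~\ref{lemma:sum sectors}. Relatedly, the decoupling you want is not the plain \cite{BDG2016} moment-curve decoupling in $\R^3$ but the conic $(2,r)$-Frenet-box decoupling of~\cite[Theorem 4.4]{BGHS-Sobolev} for the primitive curve $\bar\gamma$ in $\R^4$, applied after the spatio-temporal localisation (Lemmas~\ref{J=3 spatio temp loc lem}--\ref{J=3 4d supp lem}); the factor $2^{5\ell/12}$ it produces is exactly what is cancelled by the Pramanik--Seeger gain to leave $2^{-k/6-\ell/12}$.
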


\begin{lemma}[$L^{2} \to L^{\infty}$ estimate]\label{lem:L2 Linfty}
For $k \in \N$, $\varepsilon>0$, $0 <\delta < 1$  and $ \ell \in \overline{\Lambda}(k)$, we have
\begin{equation*}
    \sup_{1 \leq t \leq 2} \|  m[a_{k,\ell}](D;t)f \|_{L^\infty(\R^3)}  \lesssim  2^{k -  \ell/2}\|f\|_{L^2(\R^3)}.
\end{equation*}
\end{lemma}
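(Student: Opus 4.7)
The plan is to reduce this $L^2\to L^\infty$ bound to an $L^2$ estimate on the Fourier-side multiplier and then combine a pointwise stationary-phase bound with a volume estimate for its support. Specifically, by Cauchy--Schwarz in the frequency variable combined with Plancherel's theorem, one has
\begin{equation*}
    \|m[a_{k,\ell}](D;t)f\|_{L^\infty(\R^3)} \lesssim \|m[a_{k,\ell}](\,\cdot\,;t)\|_{L^2(\widehat{\R}^3)}\|f\|_{L^2(\R^3)},
\end{equation*}
so it is enough to prove $\|m[a_{k,\ell}](\,\cdot\,;t)\|_{L^2(\widehat{\R}^3)} \lesssim 2^{k - \ell/2}$ uniformly in $t \in [1,2]$. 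I would deduce this from the pointwise bound $|m[a_{k,\ell}](\xi;t)| \lesssim 2^{-k/2 + \ell/2}$ on $\supp a_{k,\ell}(\,\cdot\,;t;\,\cdot\,)$ together with the volume estimate $|\supp_\xi a_{k,\ell}(\,\cdot\,;t;\,\cdot\,)| \lesssim 2^{3k - 2\ell}$, whose product yields exactly $\|m[a_{k,\ell}](\,\cdot\,;t)\|_{L^2}^2 \lesssim 2^{2k - \ell}$.

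The pointwise bound is to be obtained via van der Corput's lemma, with phase $\phi(s) := t\inn{\gamma(s)}{\xi}$. The third-order non-degeneracy~\eqref{eq: 3rd order} ensures that $|\phi'''(s)| \gtrsim 2^k$ with constant sign on $I$, so $\phi''$ is monotone with unique zero $s_0 := \theta_2(\xi)$. For $\ell \in \Lambda(k)$, the $s$-integral splits naturally at $|s - s_0| = 2^{-\ell}$: on the outer region monotonicity of $\phi''$ gives $|\phi''| \gtrsim 2^{k-\ell}$ and the second-order van der Corput lemma returns the desired $2^{-(k-\ell)/2}$ bound, while on the inner region a short case analysis on the sign of $u(\xi)\phi'''(s_0)$ is needed. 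Using the Taylor expansion $\phi'(s) = u(\xi) + \tfrac12\phi'''(s_0)(s-s_0)^2 + O(2^k|s-s_0|^3)$, one finds that either the two terms do not cancel so that $|\phi'| \gtrsim 2^{k-2\ell}$ and one integration by parts gives $2^{-k+2\ell} \leq 2^{-k/2+\ell/2}$ (valid since $\ell \leq k/3$), or two critical points $s_\pm$ appear at distance $\sim 2^{-\ell}$ from $s_0$ with $|\phi''(s_\pm)| \sim 2^{k-\ell}$ and stationary phase returns the same $2^{-(k-\ell)/2}$ bound. For $\ell = k/3$ the weak non-degeneracy~\eqref{eq: weak non-deg} is unavailable, but the third-order van der Corput lemma applied directly to the full $s$-integral yields $|m[a_{k,k/3}](\xi;t)| \lesssim 2^{-k/3}$, which matches $2^{-k/2 + \ell/2}$ at this endpoint.

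The volume bound reduces to a short direct computation: since $\inn{\gamma''(\theta_2(\xi))}{\xi}=0$ by the defining equation for $\theta_2$, one has $\nabla u(\xi) = \gamma'(\theta_2(\xi))$, so $|\nabla u| \sim 1$ on the support for $\gamma \in \fG(\delta_0)$. Together with the degree-one homogeneity of $u$, this identifies $\supp_\xi a_{k,\ell}$ as a tubular neighbourhood of angular width $\sim 2^{-2\ell}$ around the conical surface $\{u = 0\} \cap \{|\xi|\sim 2^k\}$, whose three-dimensional measure is $\sim 2^{3k}\cdot 2^{-2\ell}$ by integration in polar coordinates. The main technical obstacle is the pointwise estimate, and within it the case analysis on the inner region $|s - s_0| < 2^{-\ell}$: one must verify that both the ``no critical point'' case (handled by integration by parts) and the ``two critical points'' case (handled by stationary phase) fit inside the uniform bound $2^{-k/2+\ell/2}$, with the two alternatives matching in size precisely at the endpoint $\ell = k/3$.
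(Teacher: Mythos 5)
Your proposal is correct and follows essentially the same route as the paper: both reduce via Cauchy--Schwarz (your $\|m\|_{L^2}\|f\|_{L^2}$ version is equivalent to the paper's $\|m\|_\infty|\supp m|^{1/2}\|f\|_{L^2}$), both obtain the pointwise bound $2^{-(k-\ell)/2}$ by van der Corput (first/second order via \eqref{eq: weak non-deg} for $\ell\in\Lambda(k)$, third order for $\ell=k/3$), and both derive the support bound $2^{3k-2\ell}$ from $|\nabla u|\sim 1$. The paper is terser and defers the stationary-phase details to cited references, whereas you spell out the case analysis around $\theta_2(\xi)$ explicitly, but the substance is identical.
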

We remark that $0< \delta < 1$ plays no significant rôle in the proofs of Proposition \ref{prop:L12} and Lemma \ref{lem:L2 Linfty} and it is used only to set up the underlying decomposition in the $a_{k,\ell}$. Similarly, $\varepsilon>0$ plays no significant rôle in Lemma \ref{lem:L2 Linfty}. 

Proposition~\ref{prop:L12} is a minor variant of estimates which have appeared in, for instance,~\cite{KLO2023} and~\cite{BGHS-Sobolev}. The result is highly non-trivial, and relies on the $\ell^p$ decoupling inequality for the moment curve from~\cite{BDG2016}. We discuss the details in \S\ref{sec: L12 loc smoothing}.

Lemma~\ref{lem:L2 Linfty}, on the other hand, is elementary. It follows from basic pointwise estimates for the multipliers $a_{k,\ell}$, obtained via stationary phase. We discuss the details in \S\ref{subsec: L2 Linfty proof}.

Given the preceding results, the key trilinear $L^4 \to L^6$ local smoothing estimate is immediate.

\begin{proof}[Proof (of Proposition~\ref{prop:L4 to L6 trilinear})]
By multilinear Hölder's inequality, Propositions \ref{prop:L12} and \ref{lem:L2 Linfty} imply their trilinear counterparts. Since 
\begin{equation*}
\begin{pmatrix}
\frac{1}{4} \\[3pt]
\frac{1}{6}
\end{pmatrix}
=
\frac{3}{5} \cdot
\begin{pmatrix}
\frac{1}{12} \\[3pt]
\frac{1}{12}
\end{pmatrix}
+
\frac{7}{20} \cdot
\begin{pmatrix}
\frac{1}{2} \\[3pt]
\frac{1}{3}
\end{pmatrix}
+
\frac{1}{20} \cdot
\begin{pmatrix}
\frac{1}{2} \\[3pt]
0
\end{pmatrix},
\end{equation*}
interpolation of the three estimates immediately gives
\begin{equation*}
    \Big(\int_1^2\Big\| \prod_{J \in \bJ} |m^J[a_{k,\ell_J}](D;t)f|^{1/3}\Big\|_{L^6(\R^3)}^6\,\ud t\Big)^{1/6} \lesssim_{\varepsilon} \delta^{-O(1)} 2^{-k/6 - |\ell_{\bJ}|/180 + \varepsilon k}\|f\|_{L^4(\R^3)}
\end{equation*}
for $\ell_{\bJ} = (\ell_J)_{J \in \bJ}$ with $0 \leq \ell_J \leq k/3$; see Figure~\ref{fig: Riesz}. Here we carry out the interpolation using a multilinear variant of the Riesz--Thorin theorem: see, for instance, \cite[\S4.4]{BL1976}.\footnote{Alternatively, a suitable multilinear interpolation theorem can be proved by directly adapting the argument used to prove the classical Riesz--Thorin theorem.} Using the geometric decay in $2^{-\ell_J}$ for each $J \in \bJ$, we sum these bounds to deduce the desired result. 
\end{proof}

To prove Proposition~\ref{prop:L4 to L6 trilinear}, it therefore remains to establish Proposition~\ref{prop:L2 L3 trilinear}, Proposition~\ref{prop:L12} and Lemma~\ref{lem:L2 Linfty}. We carry this out in \S\S\ref{sec: preliminaries}--\ref{sec: L12 loc smoothing} below.




\section{Basic properties of the multipliers}\label{sec: preliminaries}




\subsection{Elementary estimates for the multiplier}\label{subsec: L2 Linfty proof}

Using stationary phase arguments, we can immediately deduce Lemma~\ref{lem:L2 Linfty}. 

\begin{proof}[Proof (of Lemma~\ref{lem:L2 Linfty})] By the Cauchy--Schwarz inequality, we have the elementary inequality
\begin{equation*}
    \|m(D)f\|_{L^{\infty}(\R^3)} \leq \|m\|_{L^{\infty}(\widehat{\R}^3)} \big|\supp m\big|^{1/2}\|f\|_{L^2(\R^3)}.
\end{equation*}
Fixing $t \in \R$, in view of the above it suffices to show
\begin{equation*}
    \|m[a_{k,\ell}](\,\cdot\,,t)\|_{\infty} \lesssim 2^{-(k-\ell)/2}, \quad |\supp m[a_{k,\ell}](\,\cdot\,;t)| \lesssim 2^{3k - 2\ell}.
\end{equation*}
Since $\nabla u(\xi) = \gamma'\circ \theta_2(\xi)$ is bounded away from zero, the latter estimate is clear. On the other hand, the former estimate is a consequence of a simple stationary phase analysis. Indeed, for $\ell = k/3$ we apply van der Corput's inequality with third order derivatives. For $\ell  \in \Lambda(k)$, we apply van der Corput's inequality with either first or second order derivatives, using the lower bound \eqref{eq: weak non-deg}. For further details see, for example,~\cite[Lemma 3.3]{PS2007},~\cite[(5.15)]{BGHS-Sobolev} or Lemma~\ref{lem: stationary phase} below.
\end{proof}




\subsection{Scaling of the multiplier}\label{subsec: scaling}

Let $\sigma \in I$, $0 < \lambda < 1$ be such that $[\sigma - \lambda, \sigma+\lambda] \subseteq I$. Denote by $[\gamma]_{\sigma}$ the $3\times 3$ matrix
\begin{equation*} 
    [\gamma]_{\sigma}:=
    \begin{bmatrix}
    \gamma^{(1)}(\sigma) & \gamma^{(2)} (\sigma)& \gamma^{(3)}(\sigma)
    \end{bmatrix},
\end{equation*}
where the vectors $\gamma^{(j)}(\sigma)$ are understood to be column vectors. Note that this matrix is invertible due to the non-degeneracy hypothesis \eqref{eq:nondegenerate}. It is also convenient to let $[\gamma]_{\sigma,\lambda}$ denote the $3 \times 3$ matrix
\begin{equation}\label{gamma transformation}
[\gamma]_{\sigma,\lambda} := [\gamma]_{\sigma} \cdot D_{\lambda}
\end{equation}
where $D_{\lambda}:=\text{diag}(\lambda, \lambda^2, \lambda^3)$ is the diagonal matrix with eigenvalues $\lambda$, $\lambda^2, \lambda^3$. With this data, 
define the \textit{$(\sigma,\lambda)$-rescaling of $\gamma$} as the curve $\gamma_{\sigma,\lambda} \in C^{\infty}(I;\R^{3})$ given by
\begin{equation}\label{eq: curve rescale}
    \gamma_{\sigma,\lambda}(s) := [\gamma]_{\sigma,\lambda}^{-1}\big( \gamma(\sigma+\lambda s) - \gamma(\sigma) \big).
\end{equation}
A simple computation shows 
\begin{equation*}
  \det [\gamma_{\sigma,\lambda}]_{s}  = \frac{\det\big( [\gamma]_{\sigma + \lambda s}\big)}{\det \big([\gamma]_{\sigma}\big)},
\end{equation*}
and therefore $\gamma_{\sigma, \lambda}$ is also a non-degenerate curve. Furthermore, the rescaled curve satisfies the relations
\begin{equation}\label{eq:inner prods scaled}
    \inn{\gamma_{\sigma,\lambda}^{(j)}(s)}{\xi}= \lambda^{j} \inn{\gamma^{(j)}(\sigma + \lambda s)}{([\gamma]_{\sigma, \lambda})^{-\top} \xi} \qquad \text{for all $j \geq 1$}.
\end{equation}
Combining this with the fact that $\gamma_{\sigma, \lambda}$ is non-degenerate, we have
 \begin{equation}\label{eq: resc symb supp}
     |\xi| \lesssim \sum_{j=1}^3 |\inn{\gamma_{\sigma,\lambda}^{(j)}(s)}{\xi}| \lesssim \lambda^3|([\gamma]_{\sigma, \lambda})^{-\top} \xi| \lesssim |\xi|;
 \end{equation}
 here the last inequality is a simple consequence of the definition \eqref{gamma transformation}.  
 

Defining the rescaled symbol
\begin{equation}\label{eq: symbol rescale}
a_{\sigma, \lambda}(\xi; t; s)  := a(([\gamma]_{\sigma, \lambda})^{-\top} \xi; t; \sigma + \lambda s),
\end{equation}
a change of variables immediately yields
\begin{equation}\label{eq:scaling mult identity}
    m_{\gamma}[a](D; t) f(x) = \lambda \,  m_{\gamma_{\sigma, \lambda}}[a_{\sigma, \lambda}](D; t) f_{\sigma, \lambda}(([\gamma]_{\sigma, \lambda})^{-1}(x-t\gamma(\sigma)))
\end{equation}
where $f_{\sigma, \lambda}:= f \circ [\gamma]_{\sigma, \lambda}$. In particular, by scaling,
\begin{equation}\label{eq: mult norm scaled}
    \| m_\gamma[a](D;\,\cdot\,) f \|_{L^p(\R^{3}) \to L^q(\R^{3+1})} \lesssim  \delta^{1+6(\frac{1}{q}-\frac{1}{p})} \| m_{\gamma_{\sigma, \lambda}}[a_{\sigma, \lambda}](D; \cdot) \|_{L^p(\R^3) \to L^{q}(\R^{3+1})}. 
\end{equation}

Finally, we observe that if the original symbol satisfies the condition
\begin{equation}\label{eq: LS symbol recall}
    |\partial_{\xi}^{\alpha} a(\xi;t;s)| \lesssim_{\alpha} |\xi|^{-|\alpha|} \qquad \textrm{for all $\alpha \in \N_0^3$},
\end{equation}
then so too does the rescaled symbol $a_{\sigma, \lambda}$. Indeed, by the definitions \eqref{eq: symbol rescale} and \eqref{gamma transformation}, we have
\begin{equation*}
    |(\partial_{\xi}^{\alpha}a_{\sigma, \lambda})(\xi;t;s)| \lesssim_{\alpha} \lambda^{-3|\alpha|} |(\partial_{\xi}^{\alpha} a)(([\gamma]_{\sigma, \lambda})^{-\top} \xi; t; \sigma + \lambda s)|.
\end{equation*}
In view of the hypothesis \eqref{eq: LS symbol recall} and \eqref{eq: resc symb supp}, it follows that
\begin{equation*}
    |(\partial_{\xi}^{\alpha} a_{\sigma, \lambda})(\xi;t;s)| \lesssim_{\alpha} \lambda^{-3|\alpha|} \big|([\gamma]_{\sigma, \lambda})^{-\top} \xi\big|^{-|\alpha|} \lesssim_{\alpha} |\xi|^{-|\alpha|}, 
\end{equation*}
as required. 




\subsection{Critical points} 

We next describe the critical points of the phase function $s \mapsto  \inn{\gamma(s)}{\xi}$ which, under the setup of \S\ref{subsec: microloc dec}, depend on the sign of the quantity $u(\xi)$ introduced in \eqref{eq: u def}.

\begin{lemma}[{\cite[Lemma 6.2]{BGHS-helical}}]\label{theta1 lem}
Let $\xi \in \xisupp a$ and consider the equation \begin{equation}\label{0404e3.29}
    \inn{\gamma'(s)}{\xi}=0.
\end{equation}
\begin{enumerate}[i)]
    \item If $u(\xi)>0$, then the equation \eqref{0404e3.29} has no solution on $I$.
\item If $u(\xi)=0$, then the equation \eqref{0404e3.29} has only the solution $s=\theta_2(\xi)$ on $I$.
\item If $u(\xi)<0$, then the equation \eqref{0404e3.29} has precisely two solutions on $I$.
\end{enumerate}
\end{lemma}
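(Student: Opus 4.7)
The plan is a convexity analysis of the auxiliary function $h \colon I \to \R$ defined by
\begin{equation*}
h(s) := \inn{\gamma'(s)}{\xi},
\end{equation*}
whose zeros on $I$ are exactly the solutions to \eqref{0404e3.29}. The starting observation is that $h''(s) = \inn{\gamma'''(s)}{\xi}$, so the standing hypothesis \eqref{eq: 3rd order} together with the sign convention $\xi_3 > 0$ (and the $C^4$-closeness $\|\gamma - \gamma_{\circ}\|_{C^4(I)} \leq \delta_0$ coming from $\gamma \in \fG(\delta_0)$) guarantees that $h''$ has a definite sign on $I$. We may therefore assume without loss of generality that $h''(s) > 0$ for all $s \in I$, so that $h$ is strictly convex and $h'$ is strictly increasing.

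Next, the defining relation $\inn{\gamma''(\theta_2(\xi))}{\xi} = 0$ reads $h'(\theta_2(\xi)) = 0$, and strict monotonicity of $h'$ identifies $\theta_2(\xi)$ as the unique critical point of $h$ on $I$. Strict convexity then promotes it to the unique global minimum, with
\begin{equation*}
\min_{s \in I} h(s) \;=\; h(\theta_2(\xi)) \;=\; u(\xi),
\end{equation*}
by the definition \eqref{eq: u def}. With this picture in hand, the three cases of the lemma reduce to routine facts. If $u(\xi) > 0$, then $h(s) \geq u(\xi) > 0$ throughout $I$ and \eqref{0404e3.29} has no solution. If $u(\xi) = 0$, then $h \geq 0$ on $I$ with equality only at $\theta_2(\xi)$ (again by strict convexity). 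If $u(\xi) < 0$, then strict convexity immediately shows $h$ has at most two zeros on $I$.

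The main obstacle I anticipate is upgrading ``at most two'' to ``exactly two'' in case iii), which amounts to verifying that $h(\pm 1) > 0$. Given this, the intermediate value theorem applied separately on $[-1, \theta_2(\xi)]$ and $[\theta_2(\xi), 1]$ produces one zero in each sub-interval, and strict convexity prevents further zeros. To establish the required positivity at the endpoints, I would integrate the quantitative lower bound $h''(t) \gtrsim |\xi|$ twice starting from $\theta_2(\xi)$, obtaining
\begin{equation*}
h(\pm 1) \;=\; u(\xi) + \int_{\theta_2(\xi)}^{\pm 1} (\pm 1 - t)\,h''(t)\,\ud t \;\geq\; u(\xi) + c\,|\xi|\,(1 \mp \theta_2(\xi))^2,
\end{equation*}
and then appeal to the support constraints on $\xi$ from the microlocal decomposition in \S\ref{subsec: microloc dec} (together with the perturbative proximity to the moment curve, which gives a clean quadratic formula for $h$ modulo lower-order terms) to force $\theta_2(\xi)$ to lie strictly inside $I$ by a margin large enough that the quadratic term dominates $|u(\xi)|$.
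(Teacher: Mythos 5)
The paper does not prove this lemma; it is quoted verbatim from \cite[Lemma~6.2]{BGHS-helical} and used as a black box, so there is no in-paper argument to compare against. That said, your convexity argument is the natural one and is almost certainly the argument underlying the cited result: since $h''(s) = \inn{\gamma'''(s)}{\xi}$ has a definite sign on $I$ by \eqref{eq: 3rd order}, the function $h(s) = \inn{\gamma'(s)}{\xi}$ is strictly convex, $\theta_2(\xi)$ is its unique critical point and global minimiser on $I$, and $h(\theta_2(\xi)) = u(\xi)$ by \eqref{eq: u def}. Cases i) and ii) are then immediate, and case iii) reduces to showing $h(\pm 1) > 0$ so that the intermediate value theorem supplies one root on each side of $\theta_2(\xi)$.

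You correctly single out the endpoint positivity as the one nontrivial point, and your Taylor-with-remainder bound $h(\pm 1) \geq u(\xi) + c|\xi|(1 \mp \theta_2(\xi))^2$ is the right estimate. The remaining step, which you leave as an appeal to ``support constraints'', can be made concrete from the paper's setup in \S\ref{subsec: microloc dec}: the $s$-cutoff $\psi_I$ is supported compactly in the interior of $I$, and the degenerate-case condition \eqref{eq: deg case} together with \eqref{eq: 3rd order} forces any $s$ in the joint support to satisfy $|s - \theta_2(\xi)| \lesssim \delta^{20}$ (since $\inn{\gamma''(\cdot)}{\xi}$ vanishes at $\theta_2(\xi)$ and has derivative $\gtrsim |\xi|$). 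Hence $\theta_2(\xi)$ is within $O(\delta^{20})$ of a fixed compact subinterval of the open interval, giving a margin $1 \mp \theta_2(\xi) \gtrsim 1$ independent of $\delta$. The same Taylor expansion applied to $h$ at a support point $s$ gives $|u(\xi)| \lesssim \delta^{20}|\xi|$, so the quadratic term $c|\xi|(1\mp\theta_2(\xi))^2 \sim |\xi|$ dominates. With that detail filled in, your proof is complete and sound.
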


Following~\cite[\S 6]{BGHS-helical}, we can use Lemma~\ref{theta1 lem} to construct a (unique) pair of smooth mappings
\begin{equation*}
    \theta_1^{\pm} \colon \{ \xi \in \xisupp a :  u(\xi) <0 \} \to I
\end{equation*}
with $\theta_1^-(\xi)\le \theta_1^+(\xi)$ which satisfies
\begin{equation*}
    \inn{\gamma' \circ \theta_1^{\pm}(\xi)}{\xi}= 0 \quad \text{ for all $\xi \in \xisupp a$ with  $u(\xi) <0$.}
\end{equation*}
Define the functions 
\begin{equation*}
        v^{\pm}(\xi):=\inn{\gamma'' \circ \theta_1^\pm(\xi)}{\xi}  \qquad \text{ for all $\xi \in \xisupp a$ with  $u(\xi) <0$.}
\end{equation*}
Taylor expansion yields the following.

\begin{lemma}[{\cite[Lemma 6.3]{BGHS-helical}}]\label{lem: root}
Let $\xi \in \xisupp a$ with $u(\xi)<0$. Then
\begin{equation*}
    \big|v^{\pm}\big(\xip \big)\big| \sim |\theta_1^{\pm}(\xi) - \theta_2(\xi)| \sim  |\theta_1^+(\xi)-\theta_1^-(\xi)|\sim \big|u\big(\xip\big)\big|^{1/2}.
\end{equation*}
\end{lemma}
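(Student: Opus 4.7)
The plan is to establish all four equivalences via a single Taylor expansion of $h(s) := \inn{\gamma'(s)}{\xi}$ around its critical point $s = \theta_2(\xi)$. By the degree-zero homogeneity of $\theta_1^{\pm}$ and $\theta_2$ in $\xi$, I would assume throughout that $\xi \in S^2$, so that $u(\xi) = u(\xip)$ and $v^{\pm}(\xi) = v^{\pm}(\xip)$ agree with the scale-invariant quantities appearing in the statement.

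The key inputs are encoded in the definitions: that of $\theta_2$ gives $h'(\theta_2(\xi)) = 0$; that of $u$ gives $h(\theta_2(\xi)) = u(\xi)$; and the lower bound \eqref{eq: 3rd order}, which is valid throughout the degenerate regime under consideration, together with $\gamma \in \fG(\delta_0)$, ensures $|h''(\theta_2(\xi))| = |\inn{\gamma'''(\theta_2(\xi))}{\xi}| \sim 1$ with a fixed sign. Moreover, the degeneracy assumption \eqref{eq: deg case} forces $|u(\xi)| \ll 1$, placing the analysis in a small neighbourhood of $\theta_2(\xi)$ where higher-order Taylor remainders will be negligible compared to the quadratic leading term.

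Taylor's theorem would then yield
\begin{equation*}
    h(s) = u(\xi) + \tfrac{1}{2}\,h''(\theta_2(\xi))\,(s - \theta_2(\xi))^2 + O(|s - \theta_2(\xi)|^3).
\end{equation*}
Since $\theta_1^{\pm}(\xi)$ are the roots of $h$ on $I$ and $u(\xi) < 0$, imposing $h(\theta_1^{\pm}(\xi)) = 0$ and exploiting the dominance of the quadratic term over the cubic remainder yields $|\theta_1^{\pm}(\xi) - \theta_2(\xi)| \sim |u(\xi)|^{1/2}$, and shows that the two roots lie on opposite sides of $\theta_2(\xi)$ to leading order; consequently $|\theta_1^+(\xi) - \theta_1^-(\xi)| \sim |u(\xi)|^{1/2}$ as well. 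A parallel first-order Taylor expansion of $s \mapsto \inn{\gamma''(s)}{\xi}$ around $\theta_2(\xi)$, whose leading coefficient is again $\inn{\gamma'''(\theta_2(\xi))}{\xi} \sim 1$, then gives $|v^{\pm}(\xi)| \sim |\theta_1^{\pm}(\xi) - \theta_2(\xi)|$, closing the chain of equivalences.

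The argument is essentially routine; the only subtlety is verifying that the cubic remainder in the Taylor expansion is indeed dominated by the quadratic leading term, which is immediate from $|u(\xi)| \ll 1$ via \eqref{eq: deg case}. I do not anticipate any serious obstacle.
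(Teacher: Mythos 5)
Your proposal is correct and matches the approach the paper itself indicates: the lemma is imported from~\cite[Lemma 6.3]{BGHS-helical} with only the one-line hint ``Taylor expansion yields the following,'' and your argument is precisely that Taylor expansion of $h(s) := \inn{\gamma'(s)}{\xi}$ about the degenerate critical point $s = \theta_2(\xi)$, using $h(\theta_2(\xi)) = u(\xi)$, $h'(\theta_2(\xi)) = 0$, and $|h''(\theta_2(\xi))| = |\inn{\gamma'''\circ\theta_2(\xi)}{\xi}| \sim 1$ with fixed sign (from \eqref{eq: 3rd order} and $\gamma \in \fG(\delta_0)$). The one step you could spell out slightly more is that the two roots produced by the quadratic approximation really are the roots $\theta_1^{\pm}(\xi)$ of Lemma~\ref{theta1 lem} (rather than spurious roots elsewhere in $I$); this follows since Lemma~\ref{theta1 lem} guarantees exactly two roots on $I$ when $u(\xi)<0$, and in the regime $\gamma\in\fG(\delta_0)$ with $\xi_3>0$ the second derivative $h''$ is single-signed on all of $I$, so $h$ is convex with a global minimum at $\theta_2(\xi)$ and cannot vanish away from the $O(|u(\xi)|^{1/2})$-neighbourhood you construct.
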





\subsection{Stationary phase}

We next use the approach in~\cite{KLO2022} and apply stationary phase to express the multipliers $m^J[a_{k,\ell}]$ as a product of a symbol and an oscillatory term. In what follows, we let \begin{equation*}
    q_2(\xi) := \inn{\gamma\circ\theta_2(\xi)}{\xi} \qquad \textrm{and} 
\qquad q_1^{\pm}(\xi) := \inn{\gamma\circ\theta_1^{\pm}(\xi)}{\xi}
\end{equation*} 
for any value of $\xi$ such that the expression is well-defined. Our analysis leads to various rapidly decreasing error terms. Given $R \geq 1$, we let $\mathrm{RapDec}(R)$ denote the class of functions $e \in C^{\infty}(\widehat{\R}^3 \times \R)$ which satisfy $|e(\xi;t)| \lesssim_N R^{-N}$ for all $N \in \N_0$.

\begin{lemma}\label{lem: stationary phase} Let $k \in \N$ and $J \in \mathfrak{J}(\delta)$.
\begin{enumerate}[i)]
    \item For some $e_{k,k/3} \in \mathrm{RapDec}(2^k)$, we may write
\begin{equation}\label{eq: stationary k/3}
    m^J[a_{k,k/3}](\xi;t) = 2^{-k/3} e^{-itq_2(\xi)} b^J_{k,k/3}(2^{-k}\xi;t) + e_{k,k/3}(\xi;t)
\end{equation}
where $b^J_{k,k/3} \in C^{\infty}(\R^{3+1})$ is supported in $B(0,10)$, satisfies $|u(\xi)| \lesssim 2^{-2k/3 + 4\varepsilon k/3}$ and $\dist(\theta_2(\xi), J) < \delta$ for $\xi \in \xisupp b^J_{k, k/3}$  and
\begin{equation}\label{eq: stationary k/3 der bounds}
    |\partial_t^N b^J_{k,k/3}(2^{-k} \xi;t)| \lesssim_N 2^{3k\varepsilon N + \varepsilon k} \qquad \textrm{for all $(\xi; t) \in \R^{3+1}$ and all $N \in \N_0$.}
\end{equation}
    \item For $\ell \in \Lambda(k)$ and some $e_{k,\ell} \in \mathrm{RapDec}(2^k)$, we may write
\begin{equation}\label{eq: stationary ell}
    m^J[a_{k,\ell}](\xi;t) = 2^{-(k-\ell)/2} \sum_{\pm} e^{-itq_1^{\pm}(\xi)} b^{J,\pm}_{k,\ell}(2^{-k}\xi;t) + e_{k,\ell}(\xi;t)
\end{equation}
where the $b^{J,\pm}_{k,\ell} \in C^{\infty}(\R^{3+1})$ are supported in $B(0,10)$, satisfy $|u(\xi)| \sim 2^{-2\ell}$ and $\dist(\theta_2(\xi), J) < \delta$ for $\xi \in \xisupp b^{J, \pm}_{k, \ell}$ and
\begin{equation}\label{eq: stationary ell der bounds}
    |\partial_t^N b^{J,\pm}_{k,\ell}(2^{-k} \xi;t)| \lesssim_N 1 \qquad \textrm{for all $(\xi; t) \in \R^{3+1}$ and all $N \in \N_0$.}
\end{equation}
\end{enumerate}
\end{lemma}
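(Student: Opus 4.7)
The statement is a stationary phase asymptotic for the oscillatory integral $m^J[a_{k,\ell}](\xi;t) = \int e^{-it\inn{\gamma(s)}{\xi}} a_{k,\ell}(\xi;t;s)\psi_J(s)\rho(t)\,\ud s$ with large parameter $|\xi|\sim 2^k$. Writing $\phi(s) := -t\inn{\gamma(s)}{\xi}$, the bound \eqref{eq: 3rd order} gives $|\phi'''(s)|\sim 2^k$ uniformly on the relevant support, while Lemma~\ref{theta1 lem} classifies the critical points of $\phi$ via the sign of $u(\xi)$: none when $u(\xi)>0$, and two ($\theta_1^{\pm}(\xi)$) when $u(\xi)<0$, coalescing into $\theta_2(\xi)$ as $u\to 0^-$. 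Parts (ii) and (i) of the lemma correspond to the non-degenerate and Airy-type regimes of stationary phase.

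For (ii), partition $a_{k,\ell}$ smoothly according to the sign of $u$. When $u>0$, Taylor expansion about $\theta_2(\xi)$ and the same-sign contribution of $u(\xi)$ and $\phi'''$ yield the uniform lower bound $|\phi'(s)|\gtrsim 2^{k-2\ell}$; combined with $k-2\ell\gtrsim k$ for $\ell<k/3$, repeated integration by parts places this piece into $\mathrm{RapDec}(2^k)$. When $u<0$, Lemma~\ref{lem: root} and degree-one homogeneity of $v^\pm$ give $|\theta_1^+-\theta_1^-|\sim 2^{-\ell}$ and $|\phi''(\theta_1^{\pm}(\xi))| = t|v^{\pm}(\xi)|\sim 2^{k-\ell}$. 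A further smooth partition isolates each critical point inside a window of width $\sim 2^{-\ell}$; away from this window a second integration-by-parts argument contributes only to $\mathrm{RapDec}(2^k)$. Standard non-degenerate stationary phase then extracts the oscillation $e^{-itq_1^{\pm}(\xi)}$ together with the amplitude factor $|\phi''(\theta_1^{\pm})|^{-1/2}\sim 2^{-(k-\ell)/2}$. After the rescaling $s\mapsto \theta_1^{\pm}(\xi)+(2^{k-\ell})^{-1/2}r$, the residual phase and amplitude vary on unit scales with no $k$- or $\ell$-dependent losses, so the remaining amplitude $b^{J,\pm}_{k,\ell}$ is smooth and $O_N(1)$ in all $t$-derivatives, yielding \eqref{eq: stationary ell} and \eqref{eq: stationary ell der bounds}.

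For (i), the critical points may lie near $\theta_2(\xi)$ and the phase is of cubic (Airy) type, governed by the non-vanishing $\phi'''(\theta_2(\xi))$. Change variables $s = \theta_2(\xi)+2^{-k/3}r$, tuned so that the cubic coefficient is $O(1)$. Since $\inn{\gamma''(\theta_2(\xi))}{\xi}=0$, Taylor expansion gives
\begin{equation*}
-t\inn{\gamma(s)}{\xi} = -tq_2(\xi) - t\bigl[2^{-k/3}u(\xi)\,r + 2^{-k}\inn{\gamma'''(\theta_2(\xi))}{\xi}\,\tfrac{r^3}{6} + O(2^{-4k/3}r^4)\bigr].
\end{equation*}
On the support $|u(\xi/|\xi|)|\lesssim 2^{-2k/3+4\varepsilon k/3}$, each term in the bracket is $\lesssim 2^{O(\varepsilon k)}$ for $r$ in a bounded range. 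For $|r|\gtrsim 2^{O(\varepsilon k)}$, the quadratic-dominated estimate $|\phi'(s)|\gtrsim 2^{k/3}r^2$ (from further Taylor expansion) allows integration by parts to truncate the $r$-integral to $|r|\lesssim 2^{O(\varepsilon k)}$ modulo a $\mathrm{RapDec}(2^k)$ error. Extracting the Jacobian $2^{-k/3}$ and the oscillation $e^{-itq_2(\xi)}$, the remaining $r$-integral defines $b_{k,k/3}^J(2^{-k}\xi;t)$; each $\partial_t$ brings down a copy of the residual phase, of size $\lesssim 2^{O(\varepsilon k)}$, giving \eqref{eq: stationary k/3 der bounds}.

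The main technical difficulty lies in (i): accurately tracking the $\varepsilon k$-losses through the cubic Airy-type stationary phase, both in the truncation step and in the derivative bounds, requires the careful choice of the $2^{-k/3}$ rescaling and vigilant Taylor expansion. The analysis in (ii) is classical non-degenerate stationary phase once the split into sign-of-$u$ pieces is in place.
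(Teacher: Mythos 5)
Your argument for part (i) closely matches the paper's: same rescaling $s = \theta_2(\xi) + 2^{-k/3}r$, same truncation to $|r| \lesssim 2^{\varepsilon k}$ via non-stationary phase, and same tracking of the $2^{O(\varepsilon k)}$ losses through the cubic phase, so no issues there.

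For part (ii), the decomposition by the sign of $u$ (Lemma~\ref{theta1 lem}), the separation $|\theta_1^+ - \theta_1^-| \sim 2^{-\ell}$ (Lemma~\ref{lem: root}), and the isolation of the stationary points are all in line with the paper. However, there is a gap in your justification of the derivative bound \eqref{eq: stationary ell der bounds}. After the rescaling $s = \theta_1^{\pm}(\xi) + (2^{k-\ell})^{-1/2}r$, you assert that "the residual phase and amplitude vary on unit scales with no $k$- or $\ell$-dependent losses." This is not correct as stated: the residual phase $\Psi(r) := \inn{\gamma(\theta_1^{\pm} + (2^{k-\ell})^{-1/2}r) - \gamma\circ\theta_1^{\pm}}{\xi}$ satisfies $\Psi(r) \sim r^2$ with $\Psi''(0) \sim 1$, but the $r$-support of the amplitude (the image of the isolating window of width $\sim 2^{-\ell}$) is $|r| \lesssim 2^{(k-3\ell)/2}$, which is large for $\ell < k/3$. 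Consequently $\Psi$ and $\Psi'$ are \emph{not} bounded on the integration domain: $\Psi$ can be as large as $2^{k-3\ell}$. Each $\partial_t$ applied to $b^{J,\pm}_{k,\ell}$ brings down a factor of $\Psi(r)$, so the naive bound for $\partial_t^N b^{J,\pm}_{k,\ell}$ is $\lesssim (2^{k-3\ell})^N$, not $O_N(1)$. Obtaining the stated bound requires exploiting the $2N$-th order vanishing of $\Psi^N$ at the critical point against the quadratic oscillation. This is exactly what the paper does: after rescaling by $2^{-\ell}$ it Taylor expands the phase in the form $\Phi_{k,\ell}^{\pm}(\xi;s) = \tfrac{s^2}{2}\bigl[2^{-k+\ell}v^{\pm}(\xi) + 2^{-k}\omega(\xi;s)s\bigr]$ with bounded bracket, and then invokes van der Corput's lemma (in the form of \cite[Lemma 1.1.2]{SoggeBook}) to prove the key cancellation estimate \eqref{eq: stationary ell N>0}. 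You would need to supply this argument — or, alternatively, carefully track coefficients and error terms in a full stationary-phase asymptotic expansion — to complete the proof of \eqref{eq: stationary ell der bounds}.

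Two smaller imprecisions, both harmless: in the $u > 0$ case you write $k - 2\ell \gtrsim k$, but for $\ell$ close to $k/3$ this should read $k - 2\ell \geq k/3$, which still suffices for $\mathrm{RapDec}(2^k)$ after repeated integration by parts; and the paper defines $b^{J,\pm}_{k,\ell}$ directly as the remaining truncated integral rather than as the leading-order stationary-phase amplitude, so the two constructions differ at the level of presentation even though either choice is acceptable.
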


\begin{remark} In part ii) of the lemma, $u(\xi)<0$ always holds on the support of the $b^{J,\pm}_{k,\ell}$ and therefore, by Lemma \ref{theta1 lem}, the functions $\theta_1^{\pm}(\xi)$ are well-defined. The portion of the multiplier supported on the set where $u(\xi)>0$ is incorporated into the error term $e_{k,\ell}$.
\end{remark}

\begin{proof}     
i) By a change of variables,
\begin{equation*}
     m^J[a_{k,k/3}](\xi;t) = 2^{-k/3} e^{-itq_2(\xi)} \int_\R e^{- i t \Phi_k(\xi;s)} \alpha_k^J(\xi;t; 2^{-k/3}s) \, \ud s
\end{equation*}
where:
\begin{itemize}
    \item The symbol $\alpha_k^J \in C^{\infty}(\widehat{\R}^3\backslash \{0\} \times \R \times \R)$ satisfies
\begin{equation}\label{eq: alpha conds k/3}
 |u(\xi)| \lesssim 2^{k/3 + 4\varepsilon k/3}, \quad |\xi| \leq 2^{k+1}, \quad |t| \leq 4 \quad  \textrm{and} \quad \theta_2(\xi) + 2^{-k/3}s \in J 
\end{equation}
for $(\xi; t; 2^{-k/3}s) \in \supp \alpha_k^J$ and $|\partial_t^N \partial_s^M \alpha_k^J(\xi; t;s)| \lesssim_{M, N} 1$ for all $M$, $N \in \N_0$;
    \item The phase $\Phi_k$ is given by
\begin{equation*}
    \Phi_k(\xi;s) := \inn{\gamma(\theta_2(\xi) + 2^{-k/3}s)}{\xi} - \inn{\gamma \circ \theta_2(\xi)}{\xi}.
\end{equation*}
\end{itemize}

If $|s| \geq 2^{\varepsilon k}$ and $k$ is sufficiently large then, by combining \eqref{eq: alpha conds k/3} with a simple Taylor expansion argument, we see that $|\partial_s\Phi_k(\xi;s)| \gtrsim 2^{k\varepsilon}$. Therefore, by repeated integration-by-parts, we obtain \eqref{eq: stationary k/3} for 
\begin{equation*}
   b_{k,k/3}^J(2^{-k}\xi; t) : =   \int_\R e^{- i t \Phi_k(\xi;s)} \alpha_k^J(\xi;t; 2^{-k/3}s) \eta(2^{-\varepsilon k}s) \ud s
\end{equation*}
and some $e_{k,k/3} \in \mathrm{RapDec}(2^k)$.

By \eqref{eq: alpha conds k/3}, we have $\mathrm{dist}(\theta_2(\xi), J) \lesssim 2^{-k/3 + \varepsilon k} < \delta^2$ for $\xi \in \xisupp b_{k,k/3}^J$. On the other hand, \eqref{eq: stationary k/3 der bounds} now immediately follows for $N = 0$, using the triangle inequality. For larger values of $N$, the bounds follow from the estimate
\begin{equation*}
        |\Phi_k(\xi;s)| \lesssim 2^{3k\varepsilon} \qquad \text{ for all $(\xi, t; s) \in \supp \alpha_k^J$ with $|s| \lesssim 2^{\varepsilon k}$.} 
\end{equation*}
which is again a consequence of \eqref{eq: alpha conds k/3} and Taylor expansion. \medskip

\noindent ii) If $u(\xi)>0$, we know from Lemma \ref{theta1 lem} that the phase function $s \mapsto \inn{\gamma(s)}{\xi}$ has no critical points, and one can therefore obtain rapid decay of the portion of the multiplier where this condition holds; see~\cite[Lemma 8.1]{BGHS-helical} for similar arguments. We thus focus on the portion of the multiplier where $u(\xi) < 0$. Arguing analogously to the proof of part i), for a given $\rho>0$ we may define
\begin{equation*}
    b_{k,\ell}^{J,\pm}(2^{-k}\xi;t) := 2^{(k-\ell)/2} 2^{-\ell}  \int_\R e^{- i t 2^{k-3\ell}\Phi_{k,\ell}^{\pm}(\xi;s)} \alpha_{k,\ell}^{J,\pm}(\xi;t; 2^{-\ell}s) \eta(\rho^{-1}s) \, \ud s
\end{equation*}
where:
\begin{itemize}
    \item The symbols $\alpha_{k,\ell}^{J,\pm} \in C^\infty(\widehat{\R}^3 \backslash \{0\} \times \R \times \R)$ satisfy 
    \begin{equation*}
|u(\xi)| \sim 2^{k-2\ell}, \quad  |v^\pm(\xi)| \sim 2^{k-\ell}, 
\quad
|\xi| \leq 2^{k+1}, \quad |t| \leq 4, \quad \theta_1^{\pm}(\xi) + 2^{-\ell} s \in J
 \end{equation*}
for $(\xi;t;2^{-\ell}s) \in \supp \alpha_{k,\ell}^{J,\pm}$ and    $|\partial_t^N \partial_s^M \alpha_{k,\ell}^{J,\pm}(\xi; t;s)| \lesssim_{M, N} 1$ for all $M$, $N \in \N_0$;
\item  The phase $\Phi_{k,\ell}^\pm$ is given by
\begin{equation*}
    \Phi_{k,\ell}^\pm(\xi;s):=2^{-(k-3\ell)}\inn{\gamma(\theta_1^\pm(\xi) + 2^{-\ell} s) - \gamma \circ \theta_1^\pm(\xi)}{\xi}.
\end{equation*}
Moreover, $s=0$ is the only critical point of $s \mapsto \Phi_{k,\ell}^\pm(\xi;s)$ on the support of $\alpha_{k,\ell}^{J,\pm}$ if $|s| \leq \rho$ for $\rho>0$ sufficiently small.
\end{itemize}
An integration-by-parts argument similar to that in~\cite[Lemma 8.1]{BGHS-helical} then shows \eqref{eq: stationary ell} holds for some $e_{k,\ell} \in \mathrm{RapDec}(2^k)$. 

By the support properties of the $\alpha_{k,\ell}^{J, \pm}$ and Lemma~\ref{lem: root}, we have 
\begin{equation*}
   \mathrm{dist}(\theta_2(\xi), J) \leq |\theta_2(\xi) - \theta_1^{\pm}(\xi)| +\mathrm{dist}(\theta_1^{\pm}(\xi), J)  \lesssim 2^{-\ell} < \delta^2 \qquad \textrm{for $\xi \in \xisupp b_{k,\ell}^{J,\pm}$} 
\end{equation*}
and $\ell \in \Lambda(k)$. On the other hand, \eqref{eq: stationary ell der bounds} for $N=0$ follows from van der Corput's lemma, since  $|\partial_{ss}^2 \Phi_{k,\ell}^\pm (\xi;0)|= 2^{-k + \ell} |v^\pm(\xi)| \sim 1$ on $\supp \alpha_{k,\ell}^{J,\pm}$. For $N>0$, it suffices to show that
    \begin{equation}\label{eq: stationary ell N>0}
        \Big|\int_\R  e^{-i t 2^{k-3\ell} \Phi_{k,\ell}^\pm(\xi;s)} \big( 2^{k-3\ell} \Phi_{k,\ell}^\pm (\xi;s) \big)^N \alpha_{k,\ell}^{J,\pm}(\xi; t; 2^{-\ell} s) \, \ud s  \Big| \lesssim 2^{-(k-3\ell)/2}.
    \end{equation}
    By Taylor expansion, we have
        \begin{equation}\label{eq: stationary ell Phi 1}
        \Phi_{k,\ell}^\pm(\xi;s) =\frac{s^2}{2} \big[2^{-k+\ell}  v^\pm(\xi) + 2^{-k} \omega(\xi;s) s\big]
    \end{equation}
    where $|\omega(\xi;s)|\lesssim 2^{k}$ for  all $(\xi;t;s) \in \supp \alpha_{k,\ell}^{J,\pm}$; in particular, $|\Phi_{k,\ell}^\pm (\xi;s)| \lesssim 1$ in the support of $\alpha_{k,\ell}^{J,\pm}$. In view of the factor $s^2$ in \eqref{eq: stationary ell Phi 1}, the bound \eqref{eq: stationary ell N>0} is again a consequence of van der Corput's lemma  with second-order derivatives (in the specific form of, for example,~\cite[Lemma 1.1.2]{SoggeBook}).\medskip
\end{proof}




\section{Proof of the \texorpdfstring{$L^2 \to L^3$}{2} trilinear estimate}\label{sec: trilinear}

In this section we prove the key trilinear estimate from Proposition~\ref{prop:L2 L3 trilinear}. After massaging the operator into a suitable form, this is a consequence the localised multilinear restriction inequality from Theorem~\ref{thm: local BCT}.




\subsection{Reduction to multlinear restriction}\label{subsec: trilinear rest red}

Define the Fourier integral operators 
\begin{equation*}
    U^J_{k,k/3}f(x,t) := \int_{\widehat{\R}^3} e^{i(\inn{x}{\xi} - tq_2(\xi))} b^J_{k,k/3}(2^{-k}\xi,t) \hat{f}(\xi)\,\ud \xi
\end{equation*}
and
\begin{equation*}
    U^J_{k,\ell}f(x,t) := \sum_{\pm} \int_{\widehat{\R}^3} e^{i(\inn{x}{\xi} - tq_1^{\pm}(\xi))} b^{J,\pm}_{k,\ell}(2^{-k}\xi,t) \hat{f}(\xi)\,\ud \xi
\end{equation*}
for $\ell \in \Lambda(k)$. Let $\bJ \in \fJ^{3, \mathrm{sep}}(\delta)$ and $\ell_{\bJ} = (\ell_J)_{J \in \bJ}$ with $\ell_J \in \overline{\Lambda}(k)$. In light of Lemma~\ref{lem: stationary phase}, to prove Proposition~\ref{prop:L2 L3 trilinear}, it suffices to show 
\begin{equation}\label{eq: trilinear red 1}
   \Big\| \prod_{J \in \bJ}|U_J f_J|^{1/3}\Big\|_{L^3(B^{3+1}(0,1))}\lesssim_{\varepsilon} \delta^{-O(1)} 2^{k/6 - |\ell_{\bJ}|/9 + \varepsilon k}\prod_{J \in \bJ}\|f_J\|_{L^2(\R^3)}^{1/3}
\end{equation}
for $U_J := U^J_{k,\ell_J}$ and $f_J \in \Sc(\R^3)$ for $J \in \bJ$. This reduction follows from a standard localisation argument since the kernels
$K_{k,\ell}^J$ associated to the propagators $U_{k,\ell}^J$ satisfy the bounds
\begin{equation*}
    |K_{k,\ell}^J(x,t)| \lesssim_N 2^{k(d-N)} |x|^{-N} \quad \text{ for all $|x| \gtrsim 1$, $N>0$,}
\end{equation*}
via an integration-by-parts argument.

We may remove the $t$-dependence from the symbols $b^J_{k,k/3}$ and $b^{J,\pm}_{k,\ell}$ using a standard Fourier series expansion argument. Owing to the $L^2$-norm on the right-hand side of \eqref{eq: trilinear red 1}, we may also freely exchange $f$ and $\hat{f}$. Thus, after rescaling, we are led to consider operators of the form
\begin{equation*}
    T^J_{k, k/3}g(x,t) := \int_{\widehat{\R}^3} e^{i(\inn{x}{\xi} - tq_2(\xi))} b^J_{k,k/3}(\xi) g(\xi)\,\ud \xi
\end{equation*}
and
\begin{equation*}
    T^J_{k, \ell}g(x,t) := \sum_{\pm} \int_{\widehat{\R}^3} e^{i(\inn{x}{\xi} - tq_1^{\pm}(\xi))} b^{J,\pm}_{k,\ell}(\xi) g(\xi)\,\ud \xi 
\end{equation*}
for $\ell \in \Lambda(k)$, where the symbols $b^J_k$, $b^{J,\pm}_{k,\ell} \in C^{\infty}(\widehat{\R}^3)$ are bounded in absolute value by $1$ and further satisfy
\begin{equation}\label{eq: trilinear red 2a}
    \supp b^J_{k,k/3} \subseteq \big\{\xi \in B^3(0,10) : \mathrm{dist}(\theta_2(\xi), J) < \delta \textrm{ and } |u(\xi)| \lesssim 2^{-2k/3 + 4 \varepsilon k /3} \big\}
\end{equation}
and
\begin{equation}\label{eq: trilinear red 2b}
    \supp b^{J,\pm}_{k,\ell} \subseteq \big\{\xi \in B^3(0,10) : \mathrm{dist}(\theta_2(\xi), J) < \delta \textrm{ and } |u(\xi)| \sim 2^{-2\ell} \big\}
\end{equation}
for $\ell \in \Lambda(k)$. In particular, to prove the desired estimate \eqref{eq: trilinear red 1}, it suffices to show 
\begin{equation}\label{eq: trilinear red 3}
   \Big\| \prod_{J \in \bJ} |T_J g_J|^{1/3}\Big\|_{L^3(B^{3+1}(0,2^k))}\lesssim_{\varepsilon} \delta^{-O(1)} 2^{- |\ell_{\bJ}|/9 + \varepsilon k}\prod_{J \in \bJ}\|g_J\|_{L^2(\R^3)}^{1/3}
\end{equation}
for $T_J := T^J_{k,\ell_J}$  and $g_J \in \Sc(\R^3)$ for $J \in \bJ$ and $\ell_{\bJ} = (\ell_J)_{J \in \bJ}$ with $\ell_J \in \overline{\Lambda}(k)$.




\subsection{Verifying the hypotheses of Theorem~\ref{thm: local BCT}} Enumerate the intervals in $\bJ$ as $J_1$, $J_2$, $J_3$ so that, writing $\ell_i := \ell_{J_i}$ and $T_i := T_{J_i}$, we have $0 < \ell_1 \leq \ell_2 \leq \ell_3 \leq k/3$, $\ell_i \in \overline{\Lambda}(k)$. The trilinear estimate \eqref{eq: trilinear red 3} is a consequence of Theorem~\ref{thm: local BCT} where we exploit the additional localisation of the symbol of $T_3$ to the set $|u(\xi)| \lesssim 2^{-2\ell_3}$.\footnote{Or the slightly larger set $|u(\xi)| \lesssim 2^{-2k/3 + 4 \varepsilon k /3}$ in the case $\ell_3 = k/3$.} In order to apply Theorem~\ref{thm: local BCT}, we must verify the regularity and transversality hypotheses. 

We begin by noting, as a consequence of the definition of the functions $\theta_2$ and $\theta_1^{\pm}$, that
\begin{equation}\label{eq: trilinear red 4}
    \nabla q_2(\xi) = \gamma \circ \theta_2(\xi) + u(\xi)\nabla \theta_2(\xi) \quad \textrm{and} \quad \nabla q_1^{\pm}(\xi) = \gamma \circ \theta_1^{\pm}(\xi).
\end{equation}

\subsubsection*{Regularity hypothesis} We first show that the functions $q_2$ and $q_1^{\pm}$ all satisfy (at least) the $C^{1, 1/2}$ condition. It is easy to see that the function $q_2$ is $C^{1,1}$ in the sense that
\begin{equation*}
    |\nabla q_2(\xi_1) - \nabla q_2(\xi_2)| \lesssim |\xi_1 - \xi_2| \qquad \textrm{for all  $\xi_1$, $\xi_2 \in \supp b_{k,k/3}^J$.}
\end{equation*}
On the other hand, the functions $q_1^{\pm}$ are less regular and only satisfy a $C^{1,1/2}$ condition, as first observed in~\cite[Lemma 3.5]{KLO2022}.

\begin{lemma}\label{lem: C 1 1/2} For $\ell \in \Lambda(k)$, we have
    \begin{equation*}
    |\nabla q_1^{\pm}(\xi_1) - \nabla q_1^{\pm}(\xi_2)| \lesssim \min\big\{2^{-\ell}, 2^{\ell}|\xi_1 - \xi_2| \big\} + |\xi_1 - \xi_2|  \lesssim |\xi_1 - \xi_2|^{1/2} 
\end{equation*}
for all $\xi_1$, $\xi_2 \in \supp b_{k,\ell}^{J,\pm}$.
\end{lemma}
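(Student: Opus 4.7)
The plan is to reduce the inequality to a modulus-of-continuity estimate for $\theta_1^{\pm}$. By \eqref{eq: trilinear red 4}, $\nabla q_1^{\pm}(\xi) = \gamma \circ \theta_1^{\pm}(\xi)$, and since $\gamma$ is Lipschitz on $I$, it suffices to prove $|\theta_1^{\pm}(\xi_1) - \theta_1^{\pm}(\xi_2)| \lesssim \min\{2^{-\ell}, 2^{\ell}|\xi_1 - \xi_2|\} + |\xi_1 - \xi_2|$ for all $\xi_1, \xi_2 \in \supp b_{k,\ell}^{J,\pm}$. The H\"older bound $\lesssim |\xi_1 - \xi_2|^{1/2}$ would then follow from a short arithmetic comparison: in the regime $|\xi_1-\xi_2| \leq 2^{-2\ell}$ one has $2^{\ell}|\xi_1-\xi_2| \leq |\xi_1-\xi_2|^{1/2}$, while in the regime $|\xi_1-\xi_2| \geq 2^{-2\ell}$ one has $2^{-\ell} \leq |\xi_1-\xi_2|^{1/2}$; the leftover $|\xi_1-\xi_2|$ term is also dominated by $|\xi_1-\xi_2|^{1/2}$ since $|\xi_1-\xi_2| \lesssim 1$ on the bounded support.

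The key geometric input is the size of $v^{\pm}$ on $\supp b_{k,\ell}^{J,\pm}$. Since $|u(\xi)| \sim 2^{-2\ell}$ and $|\xi| \sim 1$ there, and both $u$ and $v^{\pm}$ are homogeneous of degree one, Lemma \ref{lem: root} gives $|v^{\pm}(\xi)| \sim 2^{-\ell}$. Implicit differentiation of the defining relation $\inn{\gamma' \circ \theta_1^{\pm}(\xi)}{\xi} = 0$ yields $\nabla \theta_1^{\pm}(\xi) = -\gamma' \circ \theta_1^{\pm}(\xi)/v^{\pm}(\xi)$, hence $|\nabla \theta_1^{\pm}(\xi)| \lesssim 2^{\ell}$ wherever $|u(\xi)| \sim 2^{-2\ell}$ and $u(\xi) < 0$. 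Analogously, implicit differentiation of $\inn{\gamma'' \circ \theta_2(\xi)}{\xi} = 0$ combined with the non-degeneracy condition \eqref{eq: 3rd order} gives $|\nabla \theta_2(\xi)| \lesssim 1$; in particular, since $u(\xi) = \inn{\gamma' \circ \theta_2(\xi)}{\xi}$, the function $u$ itself is Lipschitz on the support with constant $\lesssim 1$.

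I would then split into two regimes. In the \emph{small-separation} regime $|\xi_1 - \xi_2| \leq c \cdot 2^{-2\ell}$, for a sufficiently small absolute constant $c > 0$, the Lipschitz bound on $u$ guarantees that the line segment from $\xi_1$ to $\xi_2$ stays inside $\{u < 0, |u| \sim 2^{-2\ell}\}$; the mean value theorem applied along this segment, using $|\nabla \theta_1^{\pm}| \lesssim 2^{\ell}$, gives $|\theta_1^{\pm}(\xi_1) - \theta_1^{\pm}(\xi_2)| \lesssim 2^{\ell}|\xi_1 - \xi_2|$. In the \emph{large-separation} regime $|\xi_1 - \xi_2| \gtrsim 2^{-2\ell}$, I would instead work via the triangle inequality with $\theta_2(\xi_i)$ as an intermediate point: Lemma \ref{lem: root} yields $|\theta_1^{\pm}(\xi_i) - \theta_2(\xi_i)| \sim 2^{-\ell}$, while the Lipschitz bound on $\theta_2$ gives $|\theta_2(\xi_1) - \theta_2(\xi_2)| \lesssim |\xi_1 - \xi_2|$. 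Altogether $|\theta_1^{\pm}(\xi_1) - \theta_1^{\pm}(\xi_2)| \lesssim 2^{-\ell} + |\xi_1 - \xi_2|$, which matches the claim in this regime since the minimum is then realised by $2^{-\ell}$.

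The main obstacle, and the reason the case split is required, is that $\theta_1^{\pm}$ is defined only on the open set $\{u < 0\}$ and its gradient blows up like $|v^{\pm}|^{-1}$ as $|u| \to 0$. One cannot integrate $\nabla \theta_1^{\pm}$ along an arbitrary segment in the ambient frequency ball, and the threshold $|\xi_1 - \xi_2| \sim 2^{-2\ell}$ is precisely the scale at which the Lipschitz estimate on $u$ ceases to confine the segment to the ``support annulus'' $|u| \sim 2^{-2\ell}$; past this scale one must abandon the direct mean value theorem argument and instead triangulate through the smooth function $\theta_2$.
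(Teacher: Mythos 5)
Your proposal is correct and follows the same strategy as the paper: reduce to a modulus-of-continuity bound for $\theta_1^{\pm}$ via $\nabla q_1^{\pm} = \gamma \circ \theta_1^{\pm}$, derive $|\nabla\theta_1^{\pm}| \lesssim 2^{\ell}$ from implicit differentiation and Lemma~\ref{lem: root}, and combine a mean-value estimate with the triangle inequality through $\theta_2$. The only real difference is organizational. The paper proves the bound $|\theta_1^\pm(\xi_1)-\theta_1^\pm(\xi_2)|\lesssim 2^{\ell}|\xi_1-\xi_2|$ for \emph{all} pairs in the support, which forces it (in a footnote) to construct a piecewise path skirting the level set $\{u=-2^{-2\ell}\}$ to handle pairs with $|\xi_1-\xi_2|\gtrsim 2^{-2\ell}$, and then separately proves $|\theta_1^\pm(\xi_1)-\theta_1^\pm(\xi_2)|\lesssim 2^{-\ell}+|\xi_1-\xi_2|$ and takes the minimum. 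You instead split explicitly at the scale $|\xi_1-\xi_2|\sim 2^{-2\ell}$: for close pairs the Lipschitz bound on $u$ keeps the straight segment inside the annulus $\{|u|\sim 2^{-2\ell}\}$ so a direct mean-value argument suffices, and for far pairs the triangulation through $\theta_2$ already dominates since the minimum is $2^{-\ell}$ there. This dodges the piecewise-path construction entirely and is a genuine, if modest, streamlining; the quantitative output is identical.
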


\begin{proof} Fix $\xi_1$, $\xi_2 \in \supp b_{k,\ell}^{J,\pm}$. In view of \eqref{eq: trilinear red 4}, it suffices to show
\begin{equation}\label{eq: reg 1}
    |\theta_1^{\pm}(\xi_1) - \theta_1^{\pm}(\xi_2)| \lesssim |\xi_1 - \xi_2|^{1/2}.
\end{equation}
By differentiating the defining function for $\theta_1^{\pm}$, we see that
\begin{equation*}
    |\nabla \theta_1^{\pm}(\xi)| = \frac{|\gamma' \circ \theta_1^{\pm}(\xi)|}{|v^{\pm}(\xi)|} \sim 2^{\ell} \qquad \textrm{for $\xi \in \supp b_{k,\ell}^{J,\pm}$.}
\end{equation*}
Thus, the fundamental theorem of calculus implies\footnote{Here we must be a little careful in applying the fundamental theorem of calculus because the crucial condition $|u(\xi)| \sim 2^{-2\ell}$ does not hold on a convex set. If $|\xi_1 - \xi_2| \ll 2^{-2\ell}$, then this presents no problem. However, if $|\xi_1 - \xi_2| \gtrsim 2^{-2 \ell}$, then to apply the the fundamental theorem of calculus we construct a continuous, piecewise smooth curve connecting $\xi_1$ to $\xi_2$ which consists of two linear segments of length $O(2^{-2\ell})$ and a curve of length $O(|\xi_1- \xi_2|)$ which traverses the level set $\{\xi \in B(0,10) : u(\xi) = 2^{-2\ell}\}$.}
\begin{equation}\label{eq: reg 2}
     |\theta_1^{\pm}(\xi_1) - \theta_1^{\pm}(\xi_2)| \lesssim 2^{\ell}|\xi_1 - \xi_2|. 
\end{equation}
On the other hand, by Lemma~\ref{lem: root},
\begin{align}
\nonumber
    |\theta_1^\pm(\xi_1) - \theta_1^\pm(\xi_2)| &\leq |\theta_1^\pm(\xi_1) - \theta_2(\xi_1)| + |\theta_2(\xi_1) - \theta_2(\xi_2)|+ |\theta_2(\xi_2) - \theta_1^\pm(\xi_2)| \\
    \label{eq: reg 3}
    &\lesssim |\xi_1-\xi_2| + 2^{-\ell}.
\end{align}

Combining \eqref{eq: reg 2} and \eqref{eq: reg 3}, we deduce that 
\begin{equation*}
   |\theta_1^{\pm}(\xi_1) - \theta_1^{\pm}(\xi_2)| \lesssim \min\big\{2^{-\ell}, 2^{\ell}|\xi_1 - \xi_2| \big\}  + |\xi_1 - \xi_2|
\end{equation*}
which is precisely the first inequality in \eqref{eq: reg 1}. Furthermore,
\begin{equation*}
 \min\big\{2^{-\ell}, 2^{\ell}|\xi_1 - \xi_2| \big\} = \min\big\{2^{-\ell}|\xi_1 - \xi_2|^{-1/2}, 2^{\ell}|\xi_1 - \xi_2|^{1/2} \big\}  |\xi_1 - \xi_2|^{1/2} \leq |\xi_1 - \xi_2|^{1/2},  
\end{equation*}
and, since $|\xi_j| \lesssim 1$ for $\xi_j \in \supp b_{k,\ell}^{J,\pm}$, the second inequality in \eqref{eq: reg 1} immediately follows.
\end{proof}

\subsubsection*{Transversality hypothesis} We now turn to verify the transversality hypothesis from \eqref{eq: trans}. This involves estimating expressions of the form
\begin{equation*}
   \bigg| \det
    \begin{pmatrix}
        \nabla Q_1(\xi_1) &  \nabla Q_2(\xi_2) & \nabla Q_3(\xi_3) & \nabla u(\xi_3) \\
        1 & 1 & 1 & 0
    \end{pmatrix} 
    \bigg|
\end{equation*}
where each $Q_j$ is either of the functions $q_2$ or $q_1^{\pm}$. The columns where $Q_j = q_2$ are slightly more complicated since the formula for $\nabla q_2(\xi)$ in \eqref{eq: trilinear red 4} involves multiple terms. However, we can always treat the second term as an error and effectively ignore it. Indeed, if $Q_j = q_2$, then we must have $\ell_j = k/3$ and so we consider $\xi \in \supp b_{k,k/3}^J$. In this case, $|u(\xi)| \lesssim 2^{-2k/3 + 4\varepsilon k/3}$ and, by differentiating the defining equation for $\theta_2$, we also have $|\nabla \theta_2(\xi)| \lesssim 1$. Since $k$ is large, we can therefore think of $\nabla q_2(\xi)$ as a tiny perturbation of $\gamma \circ \theta_2(\xi)$ on $\supp b_{k,k/3}^J$. On the other hand, for the final column we have
\begin{equation}\label{eq: trilinear red 5}
     \nabla u(\xi) = \gamma' \circ \theta_2(\xi). 
\end{equation}

In view of the support conditions \eqref{eq: trilinear red 2a} and \eqref{eq: trilinear red 2b} and the derivative formul\ae\ \eqref{eq: trilinear red 4} and \eqref{eq: trilinear red 5}, the transversality hypothesis would follow from the bound
\begin{equation}\label{eq: trilinear red 7}
  \left| \det \begin{pmatrix}
        \gamma(s_1) & \gamma(s_2) & \gamma(s_3) & \gamma'(s_3) \\
        1 & 1 & 1 & 0
    \end{pmatrix} \right|
    \gtrsim |V(s_1, s_2, s_3)||s_1-s_3||s_2-s_3|
\end{equation}
for all $s_1, s_2, s_3 \in I$, where here and below
\begin{equation*}
    V(t_1, \dots, t_m) := \prod_{1 \leq i < j \leq m} (t_i - t_j)
\end{equation*}
denotes the Vandermonde determinant in the variables $t_1, \dots, t_m \in \R$. In order to make this reduction, we use the bound $|\theta_2(\xi) - \theta_1^{\pm}(\xi)| \lesssim 2^{-\ell} < \delta^{8}$ from Lemma~\ref{lem: root}. For $\ell_3 \in \Lambda(k)$, we can think of $\theta_2(\xi)$ and $\theta_1^{\pm}(\xi)$ as approximately equal; this allows us to reduce to a situation involving only three variables $s_1$, $s_2$, $s_3$. Here we use the fact that the right-hand side of \eqref{eq: trilinear red 7} is bounded below by (a constant multiple of) $\delta^5$ for $s_i \in I$ with $\dist(s_i, J_i) < 2\delta$ for $ 1\leq i \leq 3$ and $ \mathbf{J}=(J_1,J_2,J_3) \in \mathfrak{J}^{3,\mathrm{sep}}(\delta)$. 

 By repeated application of the fundamental theorem of calculus, we may express the left-hand determinant in \eqref{eq: trilinear red 7} as
\begin{equation*}
    \int_{s_1}^{s_2}
    \int_{s_2}^{s_3}
    \int_{t_2}^{t_3}
    \int_{t_3}^{s_3}
    \int_{u_3}^{u_4}
    \det \begin{pmatrix}
        \gamma(s_1) & \gamma'(t_2) & \gamma''(u_3) & \gamma'''(v_4) \\
        1 & 0 & 0 & 0
    \end{pmatrix} 
    \,\ud v_4 \ud u_4 \ud u_3 \ud t_3 \ud t_2. 
\end{equation*}
By continuity and the reductions in \S\ref{subsec: pert moment}, the inner determinant is single-signed and bounded below in absolute value by some constant.

By the observations of the previous paragraph, the left-hand side of \eqref{eq: trilinear red 7} is comparable to the same expression but with $\gamma$ replaced by the moment curve $\gamma_{\circ}(s) := (s, s^2/2, s^3/6)$. Consequently, it suffices to prove \eqref{eq: trilinear red 7} for this particular curve. However, in this case the left-hand side of \eqref{eq: trilinear red 7} corresponds to (the absolute value of a scalar multiple of) $\partial_t V(s_1, s_2, s_3, s_3 + t)|_{t=0}$. A simple calculus exercise shows this agrees with the expression appearing in the right-hand side of \eqref{eq: trilinear red 7}, as required. For similar arguments, see~\cite{GGPRY2021, KLO2022}.\medskip




\section{\texorpdfstring{$L^{12}$}{4}-local smoothing}\label{sec: L12 loc smoothing}

In this section we upgrade the $L^p$-local smoothing estimates of~\cite{PS2007} for $p \geq 12$ by exploiting the localisation of the spatio-temporal Fourier transform of $m[a_{k,\ell}](D;t)f$ with respect to the $2$-dimensional cone $\Gamma_2$ in $(\xi, \tau)$-space from the introduction. The arguments are similar to those of \cite{KLO2023} and \cite{BGHS-Sobolev}. Crucially, we apply a decoupling inequality from~\cite{BGHS-Sobolev}, which is a conic variant of the celebrated decoupling inequality for non-degenerate curves~\cite{BDG2016, GLYZ}. After this step, the remainder of the argument is similar to that of~\cite{PS2007}.\footnote{Decoupling is also used in~\cite{PS2007}, but only with respect to a cone in $\xi$-space, leading to non-sharp regularity estimates.}




\subsection{Decomposition along the curve} 

Fix $\zeta \in C^{\infty}(\R)$ with $\supp \zeta \subseteq I$ such that $\sum_{\mu \in \Z} \zeta(\,\cdot\, - \mu) \equiv 1$. 
For $k \in \N$ and $\ell \in \overline{\Lambda}(k)$, we write
\begin{equation*}
    a_{k,\ell}(\xi;t;s) =  \sum_{\mu \in \Z}  a_{k,\ell}(\xi;t;s)\zeta(2^{\ell}\theta_2(\xi) - \mu).
\end{equation*}
Using stationary phase arguments, as in the proof of Lemma~\ref{lem: stationary phase}, we may localise $s$ to lie in a neighbourhood of $\theta_2(\xi)$. Let $\rho>0$ be a small `fine tuning' constant chosen to satisfy the requirements of the forthcoming argument; for instance, we may take $\rho := 10^{-6}$. We then define
\begin{align*}
    a_{k,\ell}^{\mu}(\xi;t;s) &:= 
       a_{k,\ell}(\xi;t;s)\zeta(2^{\ell}\theta_2(\xi) - \mu)\eta\big(\rho 2^{\ell}(s-\theta_2(\xi))\big) \quad \textrm{for $\ell \in \Lambda(k)$,} \\ \nonumber
    a_{k,k/3}^{\mu}(\xi;t;s) &:= a_{k,\ell}(\xi;t;s)\zeta(2^{k/3}\theta_2(\xi) - \mu)\eta\big(\rho 2^{k/3-\varepsilon k}(s-\theta_2(\xi))\big)
\end{align*}
for all $\mu \in \Z$.

\begin{lemma}\label{J=3 s loc lem} Let $2 \leq p < \infty$.
For all $k \in \N$ and $\ell \in \overline{\Lambda}(k)$, we have
    \begin{equation*}\label{eq: J=3 s loc lem}
        \Big\|m\Big[a_{k,\ell} - \sum_{\mu \in \Z} a_{k,\ell}^{\mu}\Big](D; \,\cdot\,) f \Big\|_{L^p(\R^{3+1})}  \lesssim_{N,\varepsilon,p} 2^{-kN} \| f \|_{L^p(\R^3)} \qquad \textrm{for all $N \in \N$.}
    \end{equation*}
\end{lemma}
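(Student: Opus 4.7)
The plan is as follows. First, observe that since $\sum_{\mu \in \Z} \zeta(\,\cdot\, - \mu) \equiv 1$, the difference symbol collapses to
\begin{equation*}
    b_{k,\ell}(\xi;t;s) := a_{k,\ell}(\xi;t;s) - \sum_{\mu \in \Z} a_{k,\ell}^{\mu}(\xi;t;s) = a_{k,\ell}(\xi;t;s) \cdot \big[1 - \eta(\rho 2^{\ell^*}(s - \theta_2(\xi)))\big],
\end{equation*}
where $\ell^* := \ell$ if $\ell \in \Lambda(k)$ and $\ell^* := k/3 - \varepsilon k$ if $\ell = k/3$. The support of $b_{k,\ell}$ satisfies $|s - \theta_2(\xi)| \geq \rho^{-1} 2^{-\ell^*}$; for small $\rho$ this is quantitatively bounded away from the critical points $\theta_1^\pm(\xi)$ of $s \mapsto \phi(\xi, s) := \inn{\gamma(s)}{\xi}$, which lie within $O(2^{-\ell})$ of $\theta_2(\xi)$ by Lemma~\ref{lem: root}.

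Next I would establish a lower bound on the phase derivative. Since $\partial_s\phi(\xi, \theta_2(\xi)) = u(\xi)$, $\partial_s^2\phi(\xi, \theta_2(\xi)) = 0$, and $|\inn{\gamma'''(\theta_2(\xi))}{\xi}| \sim 2^k$ by \eqref{eq: 3rd order}, Taylor expansion about $\theta_2(\xi)$ gives
\begin{equation*}
    \partial_s\phi(\xi, s) = u(\xi) + \tfrac{1}{2}(s - \theta_2(\xi))^2 \inn{\gamma'''(\theta_2(\xi))}{\xi} + O(\delta_0 \cdot 2^k |s-\theta_2(\xi)|^3),
\end{equation*}
where the cubic remainder enjoys a $\delta_0$ gain since $\gamma \in \fG(\delta_0)$ forces $|\gamma^{(4)}(s)| \leq \delta_0$. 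On $\supp b_{k,\ell}$ one has $|u(\xi)| \lesssim 2^{k - 2\ell^*}$ (using $|u(\xi)| \sim 2^{k-2\ell}$ for $\ell \in \Lambda(k)$, and $|u(\xi)| \lesssim 2^{k/3 + 4\varepsilon k/3} \ll 2^{k - 2\ell^*} = 2^{k/3 + 2\varepsilon k}$ for $\ell = k/3$), while $(s-\theta_2(\xi))^2 \cdot 2^k \gtrsim \rho^{-2} \cdot 2^{k - 2\ell^*}$. Taking $\rho$ and $\delta_0$ sufficiently small forces the quadratic term to dominate, yielding $|\partial_s\phi(\xi,s)| \sim 2^k(s-\theta_2(\xi))^2 \gtrsim 2^{k - 2\ell^*}$ on $\supp b_{k,\ell}$.

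Iterated integration by parts in $s$ via $L^\top = -\partial_s \circ (it\partial_s\phi)^{-1}$ then produces the gain. Each application either differentiates the amplitude, contributing $|\partial_s b_{k,\ell}|/|t\partial_s\phi| \lesssim 2^{\ell^*}/2^{k-2\ell^*} = 2^{3\ell^* - k}$, or differentiates $(\partial_s\phi)^{-1}$, producing $|\partial_s^2\phi|/|\partial_s\phi|^2 \lesssim 2^k|s-\theta_2(\xi)|/(2^k(s-\theta_2(\xi))^2)^2 \lesssim 2^{3\ell^* - k}$, with analogous bounds for higher-order derivatives of $(\partial_s\phi)^{-1}$. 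Since $\ell^* \leq (1-2\varepsilon)k/3$ in both regimes, the gain per iteration is at most $2^{-2\varepsilon k}$, so after $N$ iterations $|m[b_{k,\ell}](\xi;t)| \lesssim_N 2^{-2\varepsilon k N}$ uniformly for $t \in [1,2]$. A parallel analysis with $\xi$-differentiations (which bring down $O(1)$ from the phase, absorbed by additional integrations by parts, or $O(2^{-k})$ from the amplitude) yields $|\partial_\xi^\alpha m[b_{k,\ell}](\xi;t)| \lesssim_{N,\alpha} 2^{-2\varepsilon k N} \cdot 2^{-k|\alpha|}$.

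Finally, since $m[b_{k,\ell}](\,\cdot\,;t)$ is Fourier-supported in $\{|\xi| \lesssim 2^k\}$ with these symbol bounds, routine integration by parts in $\xi$ shows that the associated convolution kernel $K_t$ satisfies $\|K_t\|_{L^1(\R^3)} \lesssim_N 2^{-2\varepsilon k N}$ uniformly in $t$; Young's inequality and integration over $t \in \supp \rho$ then deliver the lemma upon choosing the iteration count large depending on the given $N$. The main obstacle is the combinatorial bookkeeping of iterated integration by parts, specifically verifying that all derivatives $\partial_s^j (\partial_s\phi)^{-1}$ of arbitrary order are controlled by $\lesssim 2^{(3\ell^*-k)j}\cdot 2^{2\ell^*-k}$; this I expect to follow routinely from the Taylor structure above together with the smoothness of $\gamma \in \fG(\delta_0)$, but the inductive verification requires care.
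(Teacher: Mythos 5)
Your proposal is correct and follows essentially the same route as the paper, which simply observes that on the support of the difference symbol one has $|s-\theta_2(\xi)| \gg 2^{-\ell}$, so that $s$ is quantitatively far from the critical points $\theta_1^\pm(\xi)$ by Lemma~\ref{lem: root}, and then invokes non-stationary phase exactly as in the error analysis of Lemma~\ref{lem: stationary phase}~ii) and \cite[Lemma 8.1]{BGHS-helical}. One minor overclaim worth flagging: the symbol bound $|\partial_\xi^\alpha m[b_{k,\ell}](\xi;t)| \lesssim 2^{-2\varepsilon kN}2^{-k|\alpha|}$ is stronger than the argument actually gives, since a $\xi$-derivative falling on $e^{-it\phi}$ produces $-it\gamma(s) = O(1)$ rather than $O(2^{-k})$, and the compensating extra $s$-integrations by parts only gain additional factors of $2^{-2\varepsilon k}$ per derivative; however, the resulting weaker bound $\lesssim_{N,\alpha} 2^{-2\varepsilon kN}$ still yields $\|K_t\|_{L^1(\R^3)} \lesssim 2^{O(k)}2^{-2\varepsilon kN}$, which suffices upon taking the iteration count $N$ large enough in terms of the desired decay rate.
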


By Lemma~\ref{lem: root}, if $|s - \theta_2(\xi)| \gg 2^{-\ell}$, then $s$ is far from any roots $\theta_1^{\pm}(\xi)$ of the phase function. Hence Lemma~\ref{eq: J=3 s loc lem} follows from non-stationary phase, as in the analysis of the error term in Lemma~\ref{lem: stationary phase} ii). Moreover, the proof is very similar to that of~\cite[Lemma 8.1]{BGHS-helical} and we therefore omit the precise details.\footnote{The argument is in fact entirely the same as the proof of the case $\floor{k/3}_\varepsilon \leq \ell \leq \floor{k/3}$ from~\cite[Lemma 8.1]{BGHS-helical}.}

The support properties of the symbols $a_{k,\ell}^\mu$ are best understood in terms of the \textit{Frenet frame}. Recall, given a smooth non-degenerate curve $\gamma:I \to \R^d$, the Frenet frame is the orthonormal basis $\{\be_1(s), \dots, \be_d(s)\}$ resulting from applying the Gram--Schmidt process to  the vectors $\{ \gamma'(s), \dots, \gamma^{(d)}(s)\}$. With this setup, given $0 < r \leq 1$ and $s \in I$, recall the definition of the $(1,r)$-\textit{Frenet boxes} $\pi_{1}(s;\,r)$ introduced in~\cite{BGHS-helical}; namely,
\begin{equation*}
    \pi_1(s;\,r):= \big\{ \xi \in \widehat{\R}^3: |\inn{\be_j(s)}{\xi}| \lesssim r^{3-j} \,\, \textrm{for $j=1,\,2$}, \quad 
    |\inn{\be_{3}(s)}{\xi}| \sim 1\big\}. 
\end{equation*}
We  have the following support property.

\begin{lemma}\label{J=3 3d supp lem}
Let $k \in \N$, $\ell \in \overline{\Lambda}(k)$ and $\mu\in \Z$. Then
\begin{equation*}
   \xisupp a_{k,\ell}^{\mu} \subseteq 2^k \cdot \pi_1(s_{\mu}; 2^{-\ell}). 
\end{equation*}
\end{lemma}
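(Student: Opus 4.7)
My plan is a direct verification of the three defining inequalities of the Frenet box $\pi_1(s_\mu;2^{-\ell})$, where Taylor expansion transfers inner products from the nearby base point $\sigma := \theta_2(\xi)$ (at which the defining identity of $\theta_2$ pins down the behaviour of $\xi$ in two of the three Frenet directions) to the evaluation point $s_\mu$. Fix the point $s_\mu := 2^{-\ell}\mu$ (with the obvious modification $s_\mu := 2^{-k/3}\mu$ when $\ell = k/3$). Unpacking the definition of $a_{k,\ell}^{\mu}$ together with the support conditions on $a_{k,\ell}$, I would first record that for any $\xi \in \xisupp a_{k,\ell}^{\mu}$:
\begin{enumerate}[(i)]
    \item $|\xi|\sim 2^k$;
    \item $|\theta_2(\xi)-s_\mu|\lesssim 2^{-\ell}$, coming from the cutoff $\zeta(2^\ell\theta_2(\xi)-\mu)$ (and from the cutoff $\eta(\rho 2^{k/3-\varepsilon k}(s-\theta_2(\xi)))$ in the case $\ell=k/3$);
    \item $|u(\xi)|\sim 2^{k-2\ell}$, coming from the factor $\beta(2^{-k+2\ell}u(\xi))$ in the definition of $a_{k,\ell}$.
\end{enumerate}

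Next, the key input is the behaviour of $\xi$ in the Frenet directions at $\sigma=\theta_2(\xi)$. By the definition of $\theta_2$ and of $u$,
\[
    \inn{\gamma'(\sigma)}{\xi} = u(\xi), \qquad \inn{\gamma''(\sigma)}{\xi}=0.
\]
Since $\be_1(\sigma)$ and $\be_2(\sigma)$ are bounded linear combinations of $\gamma'(\sigma)$ and $\gamma''(\sigma)$ via Gram--Schmidt, these two identities give
\[
    |\inn{\be_1(\sigma)}{\xi}| \lesssim |u(\xi)|\sim 2^{k-2\ell}, \qquad |\inn{\be_2(\sigma)}{\xi}| \lesssim |u(\xi)|\sim 2^{k-2\ell}.
\]
I then Taylor-expand around $\sigma$, using $|s_\mu-\sigma|\lesssim 2^{-\ell}$ from (ii):
\[
    \inn{\be_j(s_\mu)}{\xi} = \inn{\be_j(\sigma)}{\xi} + (s_\mu-\sigma)\inn{\be_j'(\sigma)}{\xi} + O\bigl((s_\mu-\sigma)^2|\xi|\bigr).
\]
The Frenet--Serret formulas give $\be_1'\in \mathrm{span}\{\be_1,\be_2\}$ and $\be_2'\in\mathrm{span}\{\be_1,\be_2,\be_3\}$ with bounded coefficients, so I can upgrade these terms. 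For $j=1$ the derivative term is $O(2^{k-2\ell})$ and the quadratic error is $O(2^{k-2\ell})$, giving $|\inn{\be_1(s_\mu)}{\xi}|\lesssim 2^{k-2\ell}$; for $j=2$ the derivative term contributes $O(2^{-\ell})\cdot O(2^k)=O(2^{k-\ell})$, giving $|\inn{\be_2(s_\mu)}{\xi}|\lesssim 2^{k-\ell}$. Finally, the bound $|\inn{\be_3(s_\mu)}{\xi}|\sim 2^k$ follows from Pythagoras, since $|\xi|^2=\sum_{j=1}^{3}|\inn{\be_j(s_\mu)}{\xi}|^2\sim 2^{2k}$ and the first two summands are $o(2^{2k})$. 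Together, these three bounds place $\xi$ in $2^k\cdot \pi_1(s_\mu;2^{-\ell})$.

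The main potential obstacle is the exceptional scale $\ell=k/3$, where (ii) and (iii) only give $|s_\mu-\theta_2(\xi)|\lesssim 2^{-k/3+\varepsilon k}$ and $|u(\xi)|\lesssim 2^{k/3+O(\varepsilon k)}$. The resulting $2^{O(\varepsilon k)}$ losses propagate through the Taylor expansion, but since $\varepsilon$ is fixed and can be taken arbitrarily small (and the Frenet box inequalities for $j=1,2$ are understood with admissible implicit constants in the paper's convention), they are absorbed and the conclusion is unchanged. Otherwise the argument for $\ell=k/3$ is identical to the generic $\ell\in\Lambda(k)$ case described above.
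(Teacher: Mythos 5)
Your argument is the natural one and matches the approach the paper implicitly delegates to \cite[Lemma 8.2(a)]{BGHS-helical}: Taylor expand the Frenet inner products around $\sigma=\theta_2(\xi)$, where the defining identities give $\inn{\gamma'(\sigma)}{\xi}=u(\xi)$ and $\inn{\gamma''(\sigma)}{\xi}=0$, and then use Frenet--Serret to control the derivative terms. The mechanics for $\ell \in \Lambda(k)$ are fine (you over-estimate the $j=1$ derivative term as $O(2^{k-2\ell})$ when it is in fact $O(2^{k-3\ell})$, but that is harmless, and the Pythagoras step for $j=3$ works because $\ell > \log_2(\delta^{-8})$ ensures the $j=1,2$ contributions are genuinely subdominant).

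The one place where your write-up is too casual is the endpoint $\ell = k/3$. There the support of $a_{k,k/3}$ only gives $|u(\xi)| \lesssim 2^{k/3 + 4\varepsilon k/3}$, so the Taylor argument yields $|\inn{\be_1(s_\mu)}{\xi}| \lesssim 2^{k/3 + 4\varepsilon k/3}$, which overshoots the Frenet-box threshold $2^{k-2\ell}=2^{k/3}$ by a factor $2^{4\varepsilon k/3}$. This factor grows with $k$ and therefore \emph{cannot} be absorbed into a fixed implicit constant, so the inclusion as literally stated is only true for $\ell \in \Lambda(k)$; at $\ell=k/3$ the $\be_1$-direction requires the box to be inflated by $2^{O(\varepsilon k)}$. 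Note that the paper itself silently tolerates exactly this loss (compare Lemma~\ref{J=3 4d supp lem}, where the $4$-dimensional Frenet box is explicitly inflated to radius $2^{4\varepsilon k}2^{-\ell}$), and the loss is harmless for the downstream application to the sector decomposition, since the sector condition in Lemma~\ref{sec supp lem} only sees the $j=2,3$ directions and those bounds hold without loss. Your instinct to flag the endpoint was right; your resolution (``absorbed'') is stated too strongly, but the gap is an imprecision inherited from the paper rather than a flaw in your approach.
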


The proof is similar to that of~\cite[Lemma 8.2, (a)]{BGHS-helical}, so we omit the details.




\subsection{Spatio-temporal localisation}

The symbols are further localised with respect to the Fourier transform of the $t$-variable. In particular, define the homogeneous phase function $q_2(\xi) := \inn{\gamma\circ\theta_2(\xi)}{\xi}$  as in Lemma~\ref{lem: stationary phase} and let
\begin{equation*}
\chi_{k,\ell}(\xi, \tau) := \eta\big(2^{-(k-3\ell)-4\varepsilon k} (\tau + q_2(\xi))\big) \qquad \textrm{for $\ell \in \overline{\Lambda}(k)$.}
\end{equation*}
We introduce the localised multipliers $m_{k,\ell}^{\mu}$, defined by
\begin{equation*}
    \mathcal{F}_t \big[m_{k,\ell}^{\mu}(\xi;\,\cdot\,)\big](\tau):=    \chi_{k,\ell}(\xi, \tau) \, \mathcal{F}_t\big[ m[a_{k,\ell}^{\mu}](\xi; \,\cdot\,) \big](\tau).
\end{equation*}
 Here $\mathcal{F}_t$ denotes the Fourier transform acting in the $t$ variable. A stationary phase argument allows us to pass from $m[a_{k,\ell}^{\mu}]$ to $m_{k,\ell}^{\mu}$.

\begin{lemma}\label{J=3 spatio temp loc lem}
Let $2 \leq p < \infty$.
For all $k \in \N$, $\ell \in \overline{\Lambda}(k)$ and $\mu \in \Z$, we have
    \begin{equation*}
        \big\|\big(m[a_{k,\ell}^{\mu}] - m_{k,\ell}^{\mu}\big)(D;  \,\cdot\,) f \big\|_{L^p(\R^{3+1})}  \lesssim_{N,\varepsilon} 2^{-kN} \| f \|_{L^p(\R^3)} \qquad \textrm{for all $N \in \N$.}
    \end{equation*}
\end{lemma}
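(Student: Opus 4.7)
The proof is a non-stationary phase argument in the $t$-variable. By Fourier inversion in $\tau$,
\begin{equation*}
(m[a_{k,\ell}^\mu] - m_{k,\ell}^\mu)(\xi;t) = \int_{\R}(1 - \chi_{k,\ell}(\xi,\tau))\, \mathcal{F}_{t'}[m[a_{k,\ell}^\mu](\xi;\cdot)](\tau)\, e^{it\tau}\, \frac{\ud \tau}{2\pi},
\end{equation*}
so the task reduces to proving rapid decay of $\mathcal{F}_{t'}[m[a_{k,\ell}^\mu](\xi;\cdot)](\tau)$ on the support of $1-\chi_{k,\ell}$, i.e.\ where $|\tau + q_2(\xi)| \gtrsim 2^{k - 3\ell + 4\varepsilon k}$. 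On the support of $a_{k,\ell}^\mu(\xi;\cdot;\cdot)$, the cutoffs $\eta(\rho 2^{\ell}(s - \theta_2(\xi)))$ (or its $\ell = k/3$ analogue) and $\beta(2^{-k + 2\ell}u(\xi))$ give $|s - \theta_2(\xi)| \lesssim 2^{-\ell}$ (respectively $\lesssim 2^{-k/3 + \varepsilon k}$) and $|u(\xi)| \lesssim 2^{k - 2\ell + O(\varepsilon k)}$. Taylor expansion of $\langle \gamma(s), \xi\rangle$ around $\theta_2(\xi)$, together with the defining identity $\langle \gamma''\circ\theta_2(\xi), \xi\rangle = 0$, therefore yields
\begin{equation*}
\langle \gamma(s), \xi\rangle - q_2(\xi) = u(\xi)(s - \theta_2(\xi)) + O\bigl(|\xi|\,|s-\theta_2(\xi)|^3\bigr) = O(2^{k - 3\ell + C\varepsilon k})
\end{equation*}
for some small absolute constant $C < 4$. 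Hence, on the region $|\tau + q_2(\xi)| \gtrsim 2^{k - 3\ell + 4\varepsilon k}$, the $t'$-derivative of the phase $(t',s) \mapsto -t'(\langle \gamma(s),\xi\rangle + \tau)$ has absolute value comparable to $|\tau + q_2(\xi)|$.

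The amplitude $a_{k,\ell}^\mu(\xi;t';s)\psi_I(s)\rho(t')$ has $t'$-derivatives of all orders uniformly bounded (in $k$, $\ell$, $\mu$) via the symbol condition \eqref{eq: LS symbol} and the smoothness of $\rho$, so repeated integration by parts in $t'$ yields
\begin{equation*}
\bigl|\mathcal{F}_{t'}[m[a_{k,\ell}^\mu](\xi;\cdot)](\tau)\bigr| \lesssim_N |\tau + q_2(\xi)|^{-N} \qquad \textrm{whenever $|\tau + q_2(\xi)| \gtrsim 2^{k - 3\ell + 4\varepsilon k}$.}
\end{equation*}
Inserting this into the $\tau$-integral above and using $k - 3\ell + 4\varepsilon k \geq 4\varepsilon k$ (valid since $\ell \leq k/3$), one obtains, for every $N \in \N$, the pointwise symbol estimate $|(m[a_{k,\ell}^\mu] - m_{k,\ell}^\mu)(\xi;t)| \lesssim_{N,\varepsilon} 2^{-kN}$. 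An identical argument, noting that $\xi$-derivatives at worst introduce polynomial factors in $2^k$ (from differentiating the various localising cutoffs, $q_2$, and the phase factor $e^{-it'\langle \gamma(s),\xi\rangle}$, the last of which contributes only bounded factors $-it'\gamma(s)$), produces the corresponding rapidly decreasing bounds on $\partial_\xi^\alpha(m[a_{k,\ell}^\mu] - m_{k,\ell}^\mu)$ for all $\alpha \in \N_0^3$.

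These pointwise derivative bounds convert, via the standard $(1 - \Delta_\xi)^N$-integration-by-parts estimate, to an $L^1(\R^3)$ bound on the spatial inverse Fourier transform of the symbol. Young's inequality in the spatial variable then gives the claim for each fixed $t$, uniformly in $p \in [2,\infty)$. Integration over $t$ (whose effective support is essentially $\supp\rho$, with tails handled by a separate integration by parts in $\tau$) produces the desired $L^p(\R^{3+1})$ bound. There is no serious obstacle; the argument is entirely parallel to the treatment of the error terms in the proof of Lemma~\ref{lem: stationary phase} and to analogous estimates in \cite[Lemma 8.1]{BGHS-helical}, \cite{KLO2023} and \cite{BGHS-Sobolev}. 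The only point requiring care is the identification of the correct localisation threshold $2^{k - 3\ell + 4\varepsilon k}$, which is dictated precisely by the Taylor expansion and the vanishing $\langle \gamma''\circ\theta_2(\xi), \xi\rangle = 0$.
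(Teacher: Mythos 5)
Your proof is correct, and it follows what the paper indicates is the intended argument (the paper omits the details, pointing only to the analogous Lemma 8.3 of [BGHS-helical] and describing it as a ``fairly straightforward integration-by-parts argument''). The one step that genuinely requires care — identifying why the threshold $2^{(k-3\ell)+4\varepsilon k}$ is the right one — you handle correctly: the Taylor expansion around $\theta_2(\xi)$, using $\langle\gamma''\circ\theta_2(\xi),\xi\rangle = 0$, together with the support bounds $|u(\xi)|\lesssim 2^{k-2\ell+O(\varepsilon k)}$ and $|s-\theta_2(\xi)|\lesssim 2^{-\ell}$ (resp.\ $2^{-k/3+\varepsilon k}$) yields $|\langle\gamma(s),\xi\rangle - q_2(\xi)| \lesssim 2^{k-3\ell+3\varepsilon k}$, which sits strictly below the cutoff threshold, so that repeated $t'$-integration by parts gives $|\mathcal{F}_{t'}[m[a_{k,\ell}^{\mu}](\xi;\cdot)](\tau)| \lesssim_M |\tau + q_2(\xi)|^{-M}$ on $\supp(1-\chi_{k,\ell})$, and the conclusion follows since $k-3\ell+4\varepsilon k\geq 4\varepsilon k$. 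The remaining steps — $\xi$-derivative bounds via the same argument (noting each $\xi$-derivative costs at most a bounded factor, since $|\nabla\theta_2|\sim 2^{-k}$ and $|\nabla u|\lesssim 1$ while the cutoff scales cancel these), conversion to an $L^1$ kernel estimate, Young's inequality for each fixed $t$, and integration in $t$ with the non-compact tails controlled by $\tau$-integration by parts — are all standard and executed correctly.
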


The proof, which is based on a fairly straightforward integration-by-parts argument, is similar to that of~\cite[Lemma 8.3]{BGHS-helical} and we omit the details. 

To understand the support properties of the multipliers $m_{k,\ell}^{\mu}$, we introduce the primitive curve
\begin{equation*}
   \bar{\gamma} \colon I \to \R^4, \qquad \bar{\gamma} \colon s \mapsto \begin{bmatrix}
    \int_0^s \gamma \\
    s
    \end{bmatrix}.
\end{equation*}
Here $\int_0^s \gamma$ denotes the vector in $\R^3$ with $i$th component $\int_0^s \gamma_i$ for $1 \leq i \leq 3$. Note that $\bar{\gamma}$ is a non-degenerate curve in $\R^4$ and, in particular, \begin{equation*}
    |\det
    \begin{pmatrix}
        \bar{\gamma}^{(1)}(s) & \cdots & \bar{\gamma}^{(4)}(s)
    \end{pmatrix}| = 
    |\det
    \begin{pmatrix}
        \gamma^{(1)}(s) & \cdots & \gamma^{(3)}(s)
    \end{pmatrix}
    | \qquad \textrm{for all $s \in I$.}
\end{equation*}
Let $(\bar{\be}_j(s))_{j=1}^4$ denote the Frenet frame associated to $\bar{\gamma}$ and consider the $(2,r)$-Frenet boxes for $\bar \gamma$
\begin{equation*}
    \pi_{2,\bar{\gamma}}(s;\,r) := \big\{\Xi = (\xi, \tau) \in \widehat{\R}^3 \times \widehat{\R} : |\inn{\bar{\be}_j(s)}{\Xi}| \lesssim r^{4 - j} \textrm{ for $1 \leq j \leq 3$, } |\inn{\bar{\be}_4(s)}{\Xi}| \sim 1 \big\},
\end{equation*}
as introduced in~\cite{BGHS-helical}. 

\begin{lemma}\label{J=3 4d supp lem} For all $k \in \N$, $\ell \in \overline{\Lambda}(k)$ with $\ell \geq \ceil{4 \varepsilon k}$ and $\mu \in \Z$, we have
    \begin{equation*}
        \supp \mathcal{F}_t \big[m_{k,\ell}^{\mu}\big] \subseteq 2^k \cdot \pi_{2,\bar{\gamma}}(s_{\mu}; 2^{4\varepsilon k}2^{-\ell}),
    \end{equation*} 
     where $s_{\mu}:=2^{-\ell }\mu$ and $\mathcal{F}_t$ denotes the Fourier transform in the $t$-variable.
\end{lemma}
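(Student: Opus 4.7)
The plan is to combine the two ingredients defining the support of $\mathcal{F}_t[m_{k,\ell}^\mu]$: the $\xi$-support condition $\xi \in 2^k \cdot \pi_1(s_\mu; 2^{-\ell})$ inherited from Lemma~\ref{J=3 3d supp lem}, and the localisation $|\tau + q_2(\xi)| \lesssim 2^{k-3\ell+4\varepsilon k}$ imposed by $\chi_{k,\ell}$. The first step is to replace $q_2(\xi)$ by the linear quantity $\langle \gamma(s_\mu), \xi\rangle$, which is more naturally compared with the Frenet frame of $\bar\gamma$. Since the defining equation for $\theta_2$ ensures $\langle \gamma''(\theta_2(\xi)), \xi\rangle = 0$ and $\langle \gamma'(\theta_2(\xi)), \xi\rangle = u(\xi)$, Taylor expansion of $\gamma$ around $\theta_2(\xi)$ yields
\begin{equation*}
q_2(\xi) - \langle \gamma(s_\mu), \xi\rangle = u(\xi)(\theta_2(\xi) - s_\mu) + O\big(2^k|\theta_2(\xi) - s_\mu|^3\big).
\end{equation*}
On $\supp a_{k,\ell}^\mu$ the cutoff $\zeta(2^\ell\theta_2(\xi) - \mu)$ (respectively $\zeta(2^{k/3}\theta_2(\xi) - \mu)$ when $\ell = k/3$) forces $|\theta_2(\xi) - s_\mu| \lesssim 2^{-\ell}$, and one has $|u(\xi)| \lesssim 2^{k-2\ell+O(\varepsilon k)}$; both error terms are therefore $\lesssim 2^{k-3\ell+O(\varepsilon k)}$. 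Combined with the $\chi_{k,\ell}$ localisation, this gives
\begin{equation}\label{eq: tau gamma plan}
|\tau + \langle \gamma(s_\mu), \xi\rangle| \lesssim 2^{k-3\ell+4\varepsilon k}.
\end{equation}

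Next, the Frenet frame of $\bar\gamma$ at $s_\mu$ is made explicit using $\bar\gamma'(s) = (\gamma(s), 1)$ and $\bar\gamma^{(j)}(s) = (\gamma^{(j-1)}(s), 0)$ for $j \geq 2$. The span of $\{\bar\gamma^{(i)}(s_\mu)\}_{i=1}^j$ coincides with $\mathrm{span}\{(\gamma(s_\mu), 1), (\be_1(s_\mu), 0), \ldots, (\be_{j-1}(s_\mu), 0)\}$, so for $1 \leq j \leq 3$ one may write $\bar{\be}_j(s_\mu) = \alpha_j(\gamma(s_\mu), 1) + \sum_{i=1}^{j-1}\beta_{j,i}(\be_i(s_\mu), 0)$ with $O(1)$ coefficients. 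The last basis vector $\bar{\be}_4(s_\mu)$ is the unit normal to this 3-dimensional subspace; orthogonality to $(\gamma(s_\mu), 1)$, $(\gamma'(s_\mu), 0)$ and $(\gamma''(s_\mu), 0)$ pins it down (up to sign) to $Z^{-1}(\be_3(s_\mu), -\langle \be_3(s_\mu), \gamma(s_\mu)\rangle)$, where $Z := \sqrt{1 + \langle \be_3(s_\mu), \gamma(s_\mu)\rangle^2} \sim 1$.

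For $1 \leq j \leq 3$, substituting the representations above and combining \eqref{eq: tau gamma plan} with the Frenet box bounds $|\langle \be_i(s_\mu), \xi\rangle| \lesssim 2^{k-(3-i)\ell}$ for $i = 1, 2$ from Lemma~\ref{J=3 3d supp lem} yields
\begin{equation*}
|\langle \bar{\be}_j(s_\mu), (\xi, \tau)\rangle| \lesssim 2^{k-3\ell+4\varepsilon k} + \sum_{i=1}^{j-1}2^{k-(3-i)\ell} \lesssim 2^k(2^{4\varepsilon k}2^{-\ell})^{4-j},
\end{equation*}
which is precisely the required tangential bound. The main obstacle is the $j = 4$ direction, where both an upper bound \emph{and} a lower bound of order $2^k$ must be produced. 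Substituting $\tau = -\langle \gamma(s_\mu), \xi\rangle + O(2^{k-3\ell+4\varepsilon k})$ and decomposing $\xi = \sum_{i=1}^3 c_i \be_i(s_\mu)$ with $|c_3| \sim 2^k$ and $|c_i| \lesssim 2^{k-(3-i)\ell}$ for $i = 1, 2$, a direct calculation produces
\begin{equation*}
Z\langle \bar{\be}_4(s_\mu), (\xi, \tau)\rangle = c_3\big(1 + \langle \be_3(s_\mu), \gamma(s_\mu)\rangle^2\big) + O(2^{k-\ell}) = c_3 Z^2 + O(2^{k-\ell}),
\end{equation*}
where the algebraic collapse into $c_3 Z^2$ arises from expanding $\langle \gamma(s_\mu), \xi\rangle$ in the Frenet basis and isolating the $c_3\langle \be_3(s_\mu), \gamma(s_\mu)\rangle^2$ summand. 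The hypothesis $\ell \geq \lceil 4\varepsilon k\rceil$ ensures $2^{k-\ell} \ll |c_3 Z^2|$ for $k$ sufficiently large, so $|\langle \bar{\be}_4(s_\mu), (\xi, \tau)\rangle| \sim 2^k$ and the proof is complete.
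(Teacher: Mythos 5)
Your proof is correct in substance and gives a self-contained geometric argument, whereas the paper deals with this lemma by deferring to an analogue in a companion work and sketching only that it follows from integration by parts. Since $m_{k,\ell}^{\mu}$ is \emph{defined} via the cutoff $\chi_{k,\ell}$, the $\tau$-support really is immediate from the definitions, and the heart of the matter is precisely the Frenet frame calculation you carry out: your Taylor expansion of $q_2(\xi) - \langle\gamma(s_\mu),\xi\rangle$ correctly uses $\langle\gamma''\circ\theta_2,\xi\rangle = 0$ and $\langle\gamma'\circ\theta_2,\xi\rangle = u(\xi)$ so that the leading error is $u(\xi)(\theta_2(\xi)-s_\mu) = O(2^{k-3\ell+O(\varepsilon k)})$; your identification of $\bar\be_4(s_\mu)$ (up to normalisation) with $(\be_3(s_\mu), -\langle\be_3(s_\mu),\gamma(s_\mu)\rangle)$ and the ``collapse'' $c_3 - g_3\tau = c_3(1+g_3^2) + g_3(c_1g_1 + c_2g_2) + O(2^{k-3\ell+4\varepsilon k})$ are both correct; and the tangential bounds for $j=1,2,3$ match the $(2,r)$-Frenet box thresholds comfortably. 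This is a clean, explicit alternative to the integration-by-parts route.

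One small inaccuracy worth flagging: you attribute the hypothesis $\ell \geq \lceil 4\varepsilon k\rceil$ to the need for $2^{k-\ell} \ll |c_3 Z^2|$. That smallness actually follows already from $\ell > \lceil\log_2(\delta^{-8})\rceil$ (built into $\Lambda(k)$, with $\delta$ chosen small), not from $\ell \geq 4\varepsilon k$. The genuine role of $\ell \geq \lceil 4\varepsilon k\rceil$ is to guarantee $r := 2^{4\varepsilon k}2^{-\ell} \leq 1$, without which the $(2,r)$-Frenet box $\pi_{2,\bar\gamma}(s_\mu; r)$ is not even well-defined (the paper requires $0 < r \leq 1$). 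The misattribution does not affect the correctness of the argument, but you should state the justification for the $j = 4$ lower bound accurately.
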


The proof, which is based on a fairly straightforward integration-by-parts argument, is similar to that of~\cite[Lemma 8.4]{BGHS-helical} and we omit the details.




\subsection{A decoupling inequality}

With the above observations, we can immediately apply the decoupling inequalities in~\cite[Theorem 4.4]{BGHS-Sobolev} associated to the primitive curve $\bar \gamma$ to isolate the contributions from the individual $m_{k,\ell}^{\mu}(D; \,\cdot\,)$.

\begin{proposition}\label{J=3 decoupling}
Let $p \geq 12$. For all $k \in \N$, $\ell  \in \overline{\Lambda}(k)$, we have
\begin{equation*}
    \Big\|\sum_{\mu \in \Z} m_{k,\ell}^{\mu}(D;\, \cdot \,) f \Big\|_{L^p(\R^{3+1})} \lesssim_{\varepsilon}  2^{O(\varepsilon k)} 2^{\ell (1-\frac{7}{p})} \Big( \sum_{\mu \in \Z} \|m_{k,\ell}^{\mu}(D; \,\cdot\,) f\|_{L^p(\R^{3+1})}^p \Big)^{1/p}.
\end{equation*}
\end{proposition}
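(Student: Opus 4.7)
The plan is to apply the conic decoupling inequality for the primitive curve $\bar\gamma$ in a direct fashion. By Lemma \ref{J=3 4d supp lem}, for each $\mu \in \Z$ with $s_\mu = 2^{-\ell}\mu \in I$, the spatio-temporal Fourier transform of $m_{k,\ell}^\mu(D;\,\cdot\,)f$ is supported in the Frenet box $2^k \cdot \pi_{2,\bar\gamma}(s_\mu; 2^{4\varepsilon k} 2^{-\ell})$. The basepoints $s_\mu$ are $2^{-\ell}$-separated in $I$, so the resulting family of Frenet boxes is essentially disjoint, putting us precisely in the setup of a $\bar\gamma$-adapted decoupling inequality.

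With this framework fixed, the first step is to invoke \cite[Theorem 4.4]{BGHS-Sobolev}, which is an $\ell^p$-decoupling inequality for non-degenerate curves in $\R^4$ (applied to $\bar\gamma$, which is non-degenerate as recorded in the excerpt) with respect to its $(2,r)$-Frenet boxes at the appropriate scale. For $p \geq 12$, this theorem delivers a decoupling constant of order $2^{\varepsilon k}\,2^{\ell(1 - 7/p)}$, which matches the exponent in the proposition. The $2^k$ homogeneous dilation of the Frenet boxes appearing in Lemma \ref{J=3 4d supp lem} is handled by the scaling properties of the decoupling inequality, since the constant depends on the relative geometry of the caps rather than on their absolute size.

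The second step handles the slightly inflated scale $2^{4\varepsilon k} 2^{-\ell}$ appearing in Lemma \ref{J=3 4d supp lem}: each inflated Frenet box is covered by $O\big(2^{O(\varepsilon k)}\big)$ translates of boxes at the idealised scale $2^{-\ell}$, producing only an additional $2^{O(\varepsilon k)}$ factor in the final estimate. The main (and essentially only) subtlety is to verify that the support conditions collected in Lemmas \ref{J=3 s loc lem}, \ref{J=3 3d supp lem} and \ref{J=3 4d supp lem} place us in the exact setup of the decoupling theorem of \cite{BGHS-Sobolev}, so that the critical exponent $1 - 7/p$ for $p \geq 12$ applies. Once this identification is made, no further harmonic-analytic input is required: the argument is essentially a black-box invocation of the conic decoupling inequality for the primitive curve $\bar\gamma$, combined with a routine rescaling and cap-covering argument.
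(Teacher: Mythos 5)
Your proposal captures the core of the argument --- apply the conic $\ell^p$-decoupling inequality of \cite[Theorem 4.4]{BGHS-Sobolev} to the primitive curve $\bar\gamma$, using the support information from Lemma~\ref{J=3 4d supp lem} --- and this is indeed the route the paper takes. However, two of the surrounding details go astray.

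First, your claim that the Frenet boxes $2^k\cdot\pi_{2,\bar\gamma}(s_\mu;2^{4\varepsilon k}2^{-\ell})$ are ``essentially disjoint'' because the basepoints $s_\mu$ are $2^{-\ell}$-separated is incorrect: the boxes live at scale $2^{4\varepsilon k}2^{-\ell}$, which substantially exceeds the $2^{-\ell}$ spacing of the basepoints, so consecutive boxes overlap heavily. Your proposed fix --- covering each inflated box by $O(2^{O(\varepsilon k)})$ translates at the idealised scale $2^{-\ell}$ --- does not repair this, and in fact introduces a new difficulty: the resulting sub-boxes would not form a single Frenet-box decomposition of $\bar\gamma$ (they are translates, not Frenet boxes anchored on the curve), so the theorem of~\cite{BGHS-Sobolev} would not apply to them. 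The issue is not that the individual boxes are too large, but that the \emph{family} indexed by $\mu$ is over-populated relative to the box scale. The paper's fix is to pigeonhole the indices $\mu$ (with $|\mu|\leq 2^\ell$) into $O(2^{4\varepsilon k})$ subfamilies, each with basepoints spaced $\gtrsim 2^{4\varepsilon k}2^{-\ell}$ apart; each subfamily then constitutes a genuine $(2,2^{4\varepsilon k}2^{-\ell})$-Frenet box decomposition in the sense of~\cite[\S 4]{BGHS-Sobolev}. Decoupling is applied on each subfamily separately, and the subfamilies are recombined by the triangle inequality, incurring the acceptable $2^{O(\varepsilon k)}$ loss.

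Second, you do not address the range $\ell\leq\lceil 4\varepsilon k\rceil$. Lemma~\ref{J=3 4d supp lem} is only stated for $\ell\geq\lceil 4\varepsilon k\rceil$, so for smaller $\ell$ there is no spatio-temporal support information and the decoupling argument cannot be invoked. The paper handles this range trivially: there are at most $O(2^\ell)=O(2^{O(\varepsilon k)})$ contributing values of $\mu$, and the triangle inequality together with H\"older in $\mu$ gives the stated bound directly (with the harmless observation that $2^{\ell(1-7/p)}\geq 1$ when $p\geq 12$).
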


\begin{proof} If $\ell \in \overline{\Lambda}(k)$ satisfies $\ell \leq \ceil{4 \varepsilon k}$, then we bound the left-hand side trivially using the triangle and the Cauchy--Schwarz inequalities. For the case $\ell > \ceil{4 \varepsilon k}$, we partition the family of sets $\pi_{2,\bar \gamma}(s_\mu; 2^{4 \varepsilon k} 2^{-\ell})$ for $|\mu| \leq 2^{\ell}$ into $O(2^{4 \varepsilon k})$ subfamilies, each forming a $(2, 2^{4 \varepsilon k} 2^{-\ell})$-Frenet box decomposition in the language of~\cite[\S4]{BGHS-Sobolev}. In view of Lemma \ref{J=3 4d supp lem} and after a simple rescaling, the result now follows from~\cite[Theorem 4.4]{BGHS-Sobolev} applied with $d=3$, $n=4$ to the primitive curve $\bar \gamma$. 
\end{proof}




\subsection{Localising the input function}
The Fourier multipliers $m_{k,\ell}^{\mu}(D; t)$ induce a localisation on the input function $f$. We recall the setup from~\cite[\S8.6]{BGHS-helical}. Given $\ell  \in \overline{\Lambda}(k)$ and $m \in \Z$ define
\begin{equation*}
\Delta_{k, \ell}(m) := \big\{ \xi \in \widehat{\R}^3 : |\xi_2 - \xi_3 2^{-\ell}m| \leq C 2^{-\ell} \xi_3 \textrm{ and } C^{-1} 2^k \leq \xi_3 \leq C 2^k \big\},
\end{equation*}
where $C \geq 1$ is an absolute constant, chosen sufficiently large so that the following lemma holds. 

\begin{lemma}[{\cite[Lemma 8.8]{BGHS-helical}}]\label{sec supp lem} 
If $\mu \in \Z$, then there exists some $ m(\mu) \in \Z$ such that
\begin{equation*}
    2^k \cdot \pi_1(s_{\mu}; 2^{-\ell}) \subseteq \Delta_{k, \ell}\big(m(\mu)\big).
\end{equation*}
Furthermore, for each fixed $k$ and $\ell$, given $m \in \Z$ there are only $O(1)$ values of $\mu \in \Z$ such that $m = m(\mu)$.
\end{lemma}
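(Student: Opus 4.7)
The plan is to decompose $\xi \in 2^k \cdot \pi_1(s_\mu; 2^{-\ell})$ in the Frenet basis $\{\be_1(s_\mu), \be_2(s_\mu), \be_3(s_\mu)\}$ at $s_\mu := 2^{-\ell}\mu$ and exploit the fact that $\gamma \in \fG(\delta_0)$ is a tiny perturbation of the moment curve $\gamma_\circ$. An explicit computation for $\gamma_\circ$ gives
\[
\be_3(s) = N(s)^{-1}(s^2/2, -s, 1), \qquad N(s) := \sqrt{1+s^2+s^4/4},
\]
with $N(s) \in [1, 3/2]$ for $s \in I$, while $\be_1(s)$ lies along $(1, s, s^2/2)$ and $\be_2(s)$ is fixed by orthonormality. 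For the perturbed curve $\gamma$ the same formul\ae\ hold up to an error of size $O(\delta_0)$.

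I would then write
\[
\xi = \alpha_1 \be_1(s_\mu) + \alpha_2 \be_2(s_\mu) + \alpha_3 \be_3(s_\mu),
\]
where the defining inequalities of $\pi_1$ yield $|\alpha_1| \lesssim 2^{k-2\ell}$, $|\alpha_2| \lesssim 2^{k-\ell}$ and $|\alpha_3| \sim 2^k$, with the sign of $\alpha_3$ positive in view of the convention $\xi_3 > 0$. Reading off the second and third standard coordinates gives
\[
\xi_3 = \alpha_3/N(s_\mu) + O(2^{k-\ell}), \qquad \xi_2 = -s_\mu \alpha_3/N(s_\mu) + O(2^{k-\ell}),
\]
where the error terms absorb the $\alpha_1, \alpha_2$ contributions (each bounded by $O(2^{k-\ell})$ since the relevant standard-basis components of $\be_1, \be_2$ are $O(1)$) together with the $O(\delta_0)$ perturbation of the Frenet frame. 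In particular $\xi_3 \sim 2^k$, which verifies the size condition in the definition of $\Delta_{k,\ell}(m)$.

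Setting $m(\mu) := -\mu \in \Z$, the two identities above combine to give
\[
\xi_2 - 2^{-\ell} m(\mu)\, \xi_3 \;=\; \xi_2 + s_\mu \xi_3 \;=\; O(2^{k-\ell}) \;=\; O(2^{-\ell} \xi_3),
\]
which establishes the first condition, provided the absolute constant $C$ in the definition of $\Delta_{k,\ell}(m)$ is chosen sufficiently large (depending only on $\gamma$). For the multiplicity claim, the map $\mu \mapsto m(\mu) = -\mu$ is a bijection on $\Z$, so each $m$ is the image of exactly one $\mu$. No substantial obstacle arises: the proof reduces to carefully bookkeeping the Frenet frame perturbation, which is routine since $\delta_0 = 10^{-10}$ is a fixed small constant.
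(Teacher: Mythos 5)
Your overall strategy---decompose $\xi$ in the Frenet frame at $s_\mu$, observe that the $\be_1,\be_2$ contributions have size $O(2^{k-\ell})$ while the dominant $\be_3$ contribution points in a direction determined by $s_\mu$, and then round to a sector---is the right picture. However, there is a genuine gap in the way you handle the $\gamma\neq\gamma_\circ$ perturbation, and it invalidates your choice $m(\mu):=-\mu$.

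The issue is quantitative. Replacing the Frenet frame of $\gamma$ by the explicit formul\ae\ for $\gamma_\circ$ changes each $\be_j(s_\mu)$ by $O(\delta_0)$. When you read off $\xi_2$ and $\xi_3$, this contributes an error of size $O(\delta_0|\xi|)\sim\delta_0 2^k$, \emph{not} $O(2^{k-\ell})$. You claim these two error sources can be lumped together, but $\delta_0 2^k\gg 2^{k-\ell}$ whenever $2^{-\ell}<\delta_0$, i.e.\ whenever $\ell\gtrsim\log_2\delta_0^{-1}$; since $\ell$ ranges up to $\sim k/3$, this is exactly the regime of interest. Concretely, with $m(\mu)=-\mu$ one gets $\xi_2+s_\mu\xi_3=\alpha_3\big((\be_3(s_\mu))_2+s_\mu(\be_3(s_\mu))_3\big)+O(2^{k-\ell})$, and the parenthetical quantity vanishes only for the moment curve; for $\gamma\in\fG(\delta_0)$ it is merely $O(\delta_0)$, giving $|\xi_2+s_\mu\xi_3|\lesssim\delta_0 2^k+2^{k-\ell}$, which can far exceed $C2^{-\ell}\xi_3$. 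The remark that ``$\delta_0=10^{-10}$ is a fixed small constant'' does not help, precisely because $2^{-\ell}$ can be smaller still.

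The repair is to define $m(\mu)$ intrinsically from $\gamma$ rather than from $\gamma_\circ$. Set $\rho(s):=\inn{\be_3(s)}{\vec{e}_2}/\inn{\be_3(s)}{\vec{e}_3}$, which is well defined and smooth since $\inn{\be_3(s)}{\vec{e}_3}\sim 1$ (here one does use $\gamma\in\fG(\delta_0)$, but only to conclude $\rho(s)=-s+O(\delta_0)$ and $\rho'(s)=-1+O(\delta_0)$). Then for $\xi\in 2^k\cdot\pi_1(s_\mu;2^{-\ell})$ one has, with no perturbation loss,
\begin{equation*}
\xi_2-\rho(s_\mu)\,\xi_3=\sum_{j=1}^{2}\alpha_j\Big(\inn{\be_j(s_\mu)}{\vec{e}_2}-\rho(s_\mu)\inn{\be_j(s_\mu)}{\vec{e}_3}\Big)=O(2^{k-\ell}),
\end{equation*}
and $\xi_3\sim 2^k$. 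Defining $m(\mu)$ to be the nearest integer to $2^\ell\rho(s_\mu)$ then yields $|\xi_2-2^{-\ell}m(\mu)\xi_3|\lesssim 2^{k-\ell}\lesssim 2^{-\ell}\xi_3$, as required, upon choosing $C$ large. The multiplicity statement is no longer the trivial bijectivity of $\mu\mapsto-\mu$, but follows because $\rho'\approx-1$ forces $2^\ell\rho(s_\mu)$ to decrease by approximately $1$ for each unit increment of $\mu$, so the rounding $\mu\mapsto m(\mu)$ is at most $O(1)$-to-$1$.
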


For each $\mu \in \Z$ define the smooth cutoff function 
\begin{equation*}
    \chi_{k,\ell}^{\mu}(\xi_2, \xi_3) := \eta\big(C^{-1}(2^{\ell}\xi_2/\xi_3 - m(\mu))\big) \, \big( \eta(C^{-1}2^{-k}\xi_3) - \eta(2C^{-1}2^{-k} \xi_3)\big).
\end{equation*}
If $(\xi_1, \xi_2, \xi_3) \in \xisupp a_{k,\ell}^{\mu}$, then Lemmas \ref{J=3 3d supp lem} and \ref{sec supp lem} imply $\chi_{k,\ell}^{\mu}(\xi_2, \xi_3) = 1$. Thus, if we define the corresponding frequency projection
\begin{equation*}
    f^{\mu}_{k,\ell} := \chi_{k,\ell}^{\mu}(D)f,
\end{equation*}
it follows that
\begin{equation}\label{eq:localising input}
    m[a_{k,\ell}^{\mu}](D;\,\cdot\,)f = m[a_{k,\ell}^{\mu}](D;\,\cdot\,)f^{\mu}_{k,\ell}.
\end{equation}

\begin{lemma}\label{lemma:sum sectors}
For all $k \in \N$ and $\ell  \in \overline{\Lambda}(k)$, we have
    \begin{equation*}
        \Big( \sum_{\mu \in \Z} \|f_{k,\ell}^{\mu}\|_{L^{p}(\R^{3})}^{p} \Big)^{1/p} \lesssim \| f \|_{L^p(\R^3)}.
    \end{equation*}
\end{lemma}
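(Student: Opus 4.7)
The approach is to reduce the three-dimensional claim to a two-dimensional vector-valued Littlewood--Paley type estimate, and then establish the latter by combining Khintchine's inequality with uniform bounds for randomised Fourier multipliers. Since the symbol $\chi^\mu_{k,\ell}(\xi_2, \xi_3)$ depends only on the variables $(\xi_2, \xi_3)$, the corresponding Fourier multiplier $\chi^\mu_{k,\ell}(D)$ acts trivially in the $x_1$-direction. Applying Fubini in $x_1$ therefore reduces matters (in the relevant range $p \geq 2$) to the two-dimensional statement
\begin{equation*}
    \sum_{\mu \in \Z} \|\chi^\mu_{k,\ell}(D) g\|_{L^p(\R^2)}^p \lesssim \|g\|_{L^p(\R^2)}^p \qquad \textrm{for all $g \in L^p(\R^2)$,}
\end{equation*}
where $\chi^\mu_{k,\ell}(D)$ is now understood as a two-dimensional multiplier acting in the variables $(x_2, x_3)$.

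Next, since $p \geq 2$, the elementary sequence inequality $\|a\|_{\ell^p} \leq \|a\|_{\ell^2}$ yields the pointwise bound $\big(\sum_\mu |a_\mu|^p\big)^{1/p} \leq \big(\sum_\mu |a_\mu|^2\big)^{1/2}$. Applying this with $a_\mu = \chi^\mu_{k,\ell}(D)g$ and taking $L^p$-norms further reduces the task to the square function estimate
\begin{equation*}
    \Big\| \Big( \sum_\mu |\chi^\mu_{k,\ell}(D) g|^2 \Big)^{1/2} \Big\|_{L^p(\R^2)} \lesssim \|g\|_{L^p(\R^2)}.
\end{equation*}
By Khintchine's inequality, this is in turn equivalent to the uniform-in-signs Fourier multiplier bound
\begin{equation*}
    \Big\| \sum_{\mu \in \Z} \varepsilon_\mu \chi^\mu_{k,\ell}(D) g \Big\|_{L^p(\R^2)} \lesssim \|g\|_{L^p(\R^2)} \qquad \textrm{for all $\varepsilon \in \{-1,+1\}^{\Z}$.}
\end{equation*}

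The main obstacle is the uniform multiplier bound in the previous display. Rescaling in $\xi_3$ to reduce to $k = 0$, the multiplier $M_\varepsilon := \sum_\mu \varepsilon_\mu \chi^\mu_{0,\ell}$ is supported in the annulus $|\xi_3| \sim 1$ and has the tensor product structure $M_\varepsilon(\xi_2, \xi_3) = \Phi_\varepsilon(\xi_2/\xi_3)\, \psi(\xi_3)$, where $\Phi_\varepsilon$ is a sum of smooth equi-spaced bumps of width $2^{-\ell}$ in the angular variable $u := \xi_2/\xi_3$, and $\psi$ is a fixed cut-off adapted to $|\xi_3| \sim 1$. A direct application of the H\"ormander--Mikhlin theorem in $(\xi_2, \xi_3)$ is insufficient, as the relevant derivatives of $\Phi_\varepsilon$ scale like $2^\ell$. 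Instead, one should exploit the plank structure of the family $\{\chi^\mu_{0,\ell}\}$: for each fixed $\xi_3 \sim 1$, the supports of $\chi^\mu_{0,\ell}(\,\cdot\,,\xi_3)$ form a pairwise disjoint collection of equi-length intervals in $\xi_2$, so that the one-dimensional Rubio de Francia square function theorem for disjoint intervals (with constants uniform in $\xi_3$ by scaling) supplies the relevant bound in the angular variable. Combining this with the standard $L^p$-boundedness of the Littlewood--Paley projection $\psi(D_3)$ in $x_3$ should yield the required uniform bound on $M_\varepsilon(D)$.
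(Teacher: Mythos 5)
Your reductions up to the final step are fine: passing to the two-dimensional multiplier via Fubini, using $\ell^p \hookrightarrow \ell^2$ for $p \geq 2$ to dominate the $\ell^p$-sum by the square function, and invoking Khintchine to reduce to a uniform-in-signs multiplier bound. However, these reductions replace the lemma by a strictly stronger assertion (a square function estimate), and the final step is where the argument breaks. You propose to bound $M_\varepsilon = \sum_\mu \varepsilon_\mu \chi^\mu_{0,\ell}$ by applying the one-dimensional Rubio de Francia theorem ``for each fixed $\xi_3$.'' This is not a valid reduction. The multiplier $\chi^\mu_{0,\ell}(\xi_2, \xi_3) = \eta\big(C^{-1}(\xi_2/\xi_3 - m(\mu))\big)\,\psi(\xi_3)$ is a genuine two-dimensional angular sector cut-off: the $\xi_2$-interval it singles out tilts with $\xi_3$, so it is not a tensor product of a 1D interval multiplier in $\xi_2$ with a Littlewood--Paley piece in $\xi_3$. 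Applying a 1D multiplier theorem frequency-slice by frequency-slice in $\xi_3$ does not give a bound on the associated 2D operator on $L^p(\R^2)$ for $p \neq 2$, because the Fourier transform does not diagonalise the $\xi_3$-dependence. Neither the classical Rubio de Francia theorem nor its rectangular extensions (Journé) apply to decompositions into sectors; square function estimates for equi-angular sectors truncated to an annulus (Córdoba-type) are a separate and more delicate matter and are in any case not what is invoked.

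The paper's own proof avoids this entirely and only proves what is needed: the $\ell^p(L^p)$ bound is obtained by interpolating the two endpoints $p = 2$ and $p = \infty$. For $p = 2$ one uses Plancherel together with the finite overlap of the sectors $\Delta_{k,\ell}(m(\mu))$ supplied by Lemma~\ref{sec supp lem}. For $p = \infty$ one shows $\sup_\mu \| \mathcal{F}_{\xi_2,\xi_3}^{-1}[\chi_{k,\ell}^\mu] \|_{L^1(\R^2)} \lesssim 1$, which reduces via the anisotropic scaling $\chi_{k,\ell}^{\mu}(\xi_2, \xi_3) = \chi_{0,0}^{\mu}(2^{\ell-k}\xi_2, 2^{-k}\xi_3)$ (which preserves the kernel's $L^1$ norm) to the case $k = \ell = 0$. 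Mixed-norm (vector-valued) interpolation between $\ell^2(L^2)$ and $\ell^\infty(L^\infty)$ then gives the $\ell^p(L^p)$ bound for all $2 \leq p \leq \infty$. This is both simpler and exactly matched to the strength of the statement; the square function route would, even if it could be completed, prove more than is required.
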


\begin{proof}
    The case $p=2$ follows from Plancherel's theorem via Lemma \ref{sec supp lem} and the finite overlapping of the sectors $\Delta_{k,\ell}(m(\mu))$. For $p=\infty$, it is easy to see that $\sup_\mu \| \mathcal{F}_{\xi_2,\xi_3}^{-1} [\chi_{k,\ell}^\mu] \|_{L^1(\R^2)} \lesssim 1$; indeed this is immediate for $k = \ell = 0$ and the general case follows since $\chi_{k,\ell}^{\mu}(\xi_2, \xi_3) = \chi_{0,0}^{\mu}(2^{\ell-k}\xi_2, 2^{-k}\xi_3)$. Interpolating these two cases, using mixed-norm interpolation (see, for instance,~\cite[\S 5.6]{BL1976}) concludes the proof.
\end{proof}




\subsection{Local smoothing for the \texorpdfstring{$m[a_{k,\ell}^{\mu}](D,t)$}{3}}

We recall the following result, which follows from~\cite[Theorem 4.1]{PS2007} when combined with the main result from~\cite{BD2015}.

\begin{theorem}[{\cite[Theorem 4.1]{PS2007}}]\label{PS LS J=2}
  Let $\gamma \colon I \to \R^3$ be a smooth curve and suppose that $a \in C^{\infty}(\widehat{\R}^3\setminus \{0\} \times \R \times \R)$ satisfies the symbol conditions
\begin{equation*}
    |\partial_{\xi}^{\alpha}\partial_t^i \partial_s^j a(\xi;t;s)| \lesssim_{\alpha, i, j} |\xi|^{-|\alpha|} \qquad \textrm{for all $\alpha \in \N_0^3$ and $i$, $j \in \N_0$}
\end{equation*}
and that
\begin{equation*}
    |\inn{\gamma'(s)}{\xi}| + |\inn{\gamma''(s)}{\xi}| \gtrsim |\xi| \qquad \text{ for all $\,\, (\xi;s) \in \xisupp a \times I$}.
\end{equation*}
Let $ p \geq 6$, $\varepsilon>0$ and $k \geq 1$. If $a_k$ is defined as in \eqref{symbol dec}, then
\begin{equation*}
    \Big(\int_1^2\|m[a_k](D;t)f\|_{L^p(\R^3)}^p\,\ud t\Big)^{1/p} \lesssim_{\varepsilon, p} 2^{-2k/p + \varepsilon k}\|f\|_{L^p(\R^3)}.
\end{equation*}
\end{theorem}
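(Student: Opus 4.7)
The plan relies on stationary phase to reduce the problem to a cone-multiplier local smoothing estimate, to which the $\ell^2$-decoupling of Bourgain--Demeter~\cite{BD2015} applies with the sharp exponent.

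First, a partition of unity in $s$ reduces to the case where $a(\xi;t;s)$ is localised to a small window $J \subseteq I$. The hypothesis $|\inn{\gamma'(s)}{\xi}| + |\inn{\gamma''(s)}{\xi}| \gtrsim |\xi|$ forces, on each localised piece, either $|\inn{\gamma'(s)}{\xi}| \gtrsim |\xi|$ uniformly on $J$ -- in which case repeated integration by parts in $s$ yields a rapidly decreasing error and the estimate is trivial -- or a unique non-degenerate critical point $s_0 = s_0(\xi) \in J$ with $|\partial_s^2 \inn{\gamma(s_0)}{\xi}| \sim |\xi|$. Stationary phase in the latter case gives
\begin{equation*}
    m[a_k](\xi;t) = 2^{-k/2} e^{-itq(\xi)} b_k(\xi;t) + \mathrm{RapDec}(2^k),
\end{equation*}
where $q(\xi) := \inn{\gamma \circ s_0(\xi)}{\xi}$ is homogeneous of degree one and $b_k$ is an order-zero symbol with uniform derivative bounds in $\xi$ and $t$.

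Next, differentiating $\inn{\gamma'(s_0(\xi))}{\xi} = 0$ and $q$ itself gives $\nabla q(\xi) = \gamma \circ s_0(\xi)$ and $\nabla^2 q(\xi) = -\inn{\gamma''(s_0)}{\xi}^{-1}\,\gamma'(s_0) \otimes \gamma'(s_0)$, a rank-one symmetric matrix. The graph $\Gamma_3 := \{(\xi, -q(\xi)) : |\xi| \sim 2^k\} \subset \widehat{\R}^4$ is accordingly a three-dimensional ruled hypersurface with two-dimensional rulings and a single curved transverse direction, along which the sharp $\ell^2$-decoupling exponent is $p=6$ (via the parabola decoupling of~\cite{BD2015}). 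Partitioning the $\xi$-support into plates $\theta$ of dimensions $\sim 2^k \times 2^{k/2} \times 2^{k/2}$ adapted to the curved direction of $\Gamma_3$ and writing $a_k = \sum_\theta a_k^\theta$, decoupling yields
\begin{equation*}
    \|m[a_k](D;\,\cdot\,)f\|_{L^6(\R^3 \times [1,2])} \lesssim_\varepsilon 2^{\varepsilon k}\,\Big(\sum_\theta \|m[a_k^\theta](D;\,\cdot\,)f\|_{L^6(\R^3 \times [1,2])}^2\Big)^{1/2}.
\end{equation*}

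Finally, a plate-wise kernel estimate -- incorporating the $2^{-k/2}$ gain from stationary phase -- combined with $L^2$-orthogonality of the frequency-plate projections of $f$ closes the bound at the critical exponent $p = 6$, giving $2^{-k/3 + \varepsilon k}$, and interpolation with the trivial $L^\infty \to L^\infty$ estimate extends the result to the full range $p \geq 6$. The main technical obstacle is identifying the correct decoupling: because $\Gamma_3$ has rank-one second fundamental form, its sharp $\ell^2$-decoupling comes from the parabola decoupling applied to the curved direction transverse to the rulings, rather than from a wave-equation-style cone decoupling (which would only supply $p \geq 4$). Executing this geometric reduction rigorously is the technical heart of the Pramanik--Seeger argument~\cite{PS2007}, and the use of~\cite{BD2015} at the decoupling step produces the sharp $\varepsilon$-loss stated in the theorem.
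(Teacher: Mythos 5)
The paper does not prove this theorem: it is cited from Pramanik--Seeger~\cite{PS2007} as it stands, with the sharp $p\geq 6$ range following by upgrading the decoupling step there to Bourgain--Demeter~\cite{BD2015}. Your sketch is a plausible reconstruction of that argument and correctly identifies the essential geometry: after stationary phase, one is dealing with a Fourier integral operator whose phase $q(\xi)$ has a rank-one Hessian, so the associated cone $\Gamma_3$ in $\widehat{\R}^4$ has one curved direction and two flat ones, and the $\ell^2 L^6$-decoupling reduces to the one for the parabola.

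However, there is a concrete error in the plate decomposition. The unique curved direction of $\Gamma_3$ is $\gamma'\circ\theta_1(\xi)$ (the rank-one direction of $\nabla^2 q$), and the plates must be \emph{thin only in that direction}: dimensions $\sim 2^{k/2}\times 2^k\times 2^k$ (thin along $\gamma'\circ\theta_1$, full length along the other tangential direction and the radial direction). This corresponds to partitioning the arc of $\theta_1$-values into $\sim 2^{k/2}$ intervals of length $2^{-k/2}$, so there are $N\sim 2^{k/2}$ plates. Your plates of dimensions $\sim 2^k\times 2^{k/2}\times 2^{k/2}$ give $N\sim 2^k$ plates, which does not close the numerology: with the $2^{-k/2}$ stationary-phase gain, the H\"older step $(\sum_\theta\|f_\theta\|_6^2)^{1/2}\lesssim N^{1/3}(\sum_\theta\|f_\theta\|_6^6)^{1/6}$ would yield $2^{-k/2}\cdot N^{1/3}=2^{-k/6}$ at $p=6$ rather than the required $2^{-k/3}$. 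With $N\sim 2^{k/2}$ the same computation gives $2^{-k/2}\cdot 2^{k/6}=2^{-k/3}$, which is what the theorem asserts. Finally, your parenthetical remark that a ``wave-equation-style cone decoupling \dots would only supply $p\geq 4$'' has the inequality pointing the wrong way: a decoupling threshold of $4$ would be the \emph{stronger} statement; the correct point is that cone decoupling with threshold $4$ requires rank-two curvature, which $\Gamma_3$ lacks, so one is forced to use the weaker parabola-type threshold $p_c=6$, matching the range in the statement.
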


By combining Theorem \ref{PS LS J=2} with the rescaling from \S\ref{subsec: scaling} we obtain the following bound for our multipliers $m[a_{k,\ell}^{\mu}]$.  

\begin{proposition}\label{prop:LS J=2}
Let $p \geq 6$. For all $k \in \N$, $\ell  \in \overline{\Lambda}(k)$ and $\mu \in \Z$ we have
\begin{equation}\label{eq:LS J=2}
    \Big( \int_1^2\| m[a_{k,\ell}^{\mu}](D; t) f \|_{L^p(\R^3)}^p \, \ud t \Big)^{1/p} \lesssim_{\varepsilon} 2^{O(\varepsilon k)} 2^{-2(k-3\ell)/p -\ell} \| f \|_{L^p(\R^3)}.
\end{equation}
\end{proposition}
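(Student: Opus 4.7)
The plan is to apply the $(\sigma, \lambda)$-rescaling of \S\ref{subsec: scaling} with $\sigma := s_\mu = 2^{-\ell}\mu$ and $\lambda := 2^{-\ell}$ (with the understanding that $\lambda := 2^{-k/3 + \varepsilon k}$ in the edge case $\ell = k/3$). From the Frenet box containment $\xisupp a_{k,\ell}^\mu \subseteq 2^k \cdot \pi_1(s_\mu; 2^{-\ell})$ of Lemma~\ref{J=3 3d supp lem}, the coordinates of any $\eta \in \xisupp a_{k,\ell}^\mu$ in the Frenet frame at $s_\mu$ have respective scales $(2^{k-2\ell}, 2^{k-\ell}, 2^k)$. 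Since $[\gamma]_{\sigma,\lambda}^\top = D_\lambda [\gamma]_\sigma^\top$, the rescaled frequency $\xi = [\gamma]_{\sigma,\lambda}^\top \eta$ has all three components of size $2^{k-3\ell}$, and hence $(a_{k,\ell}^\mu)_{\sigma,\lambda}$ is supported in a ball of radius $2^{k'}$ with $k' := k - 3\ell$.

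The crucial point is that the rescaled symbol satisfies the \emph{strong} non-degeneracy hypothesis \eqref{eq: super LS non deg}. By \eqref{eq:inner prods scaled}, for $s'$ in the $s$-support of the rescaled symbol (so that $s := \sigma + \lambda s' = s_\mu + 2^{-\ell}s' \in I$) and $\eta := [\gamma]_{\sigma,\lambda}^{-\top}\xi \in \xisupp a_{k,\ell}^\mu$, one has $|\inn{\gamma_{\sigma,\lambda}'(s')}{\xi}| = \lambda\,|\inn{\gamma'(s)}{\eta}|$. The $s$-localisation built into $a_{k,\ell}^\mu$ forces $|s - \theta_2(\eta)| \lesssim \rho \, 2^{-\ell}$ for $\rho > 0$ small, and Taylor expansion around the critical point $\theta_2(\eta)$, combined with the cubic-term bound $|\inn{\gamma'''(s)}{\eta}| \lesssim 2^k$ from \eqref{eq: 3rd order}, yields $|\inn{\gamma'(s)}{\eta}| = |u(\eta)| + O(\rho^2 \, 2^{k-2\ell}) \sim 2^{k-2\ell}$. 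Consequently $|\inn{\gamma_{\sigma,\lambda}'(s')}{\xi}| \sim 2^{k-3\ell} \sim |\xi|$, verifying \eqref{eq: super LS non deg}.

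The symbol condition \eqref{eq: LS symbol} is preserved under the rescaling (as recorded at the end of \S\ref{subsec: scaling}) and the rescaled curve $\gamma_{\sigma, \lambda}$ remains non-degenerate. Applying Theorem~\ref{PS LS J=2} to the rescaled operator at frequency scale $2^{k'}$ therefore produces
\begin{equation*}
    \Big( \int_1^2 \big\| m_{\gamma_{\sigma,\lambda}}[(a_{k,\ell}^{\mu})_{\sigma,\lambda}](D; t) g \big\|_{L^p(\R^3)}^p \, \ud t \Big)^{1/p} \lesssim_{\varepsilon, p} 2^{-2k'/p + \varepsilon k'} \|g\|_{L^p(\R^3)}.
\end{equation*}
Unwinding the rescaling via the pointwise identity \eqref{eq:scaling mult identity} (after changing variables $y := [\gamma]_{\sigma,\lambda}^{-1}(x - t\gamma(\sigma))$ in the target and using $\|f_{\sigma,\lambda}\|_{L^p} = |\det [\gamma]_{\sigma,\lambda}|^{-1/p} \|f\|_{L^p}$, the determinant factors cancel and an overall factor of $\lambda$ remains in the $p=q$ case) then yields the desired bound $2^{O(\varepsilon k)}\, 2^{-\ell - 2(k-3\ell)/p}\|f\|_{L^p}$.

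The main obstacle is verifying the strong non-degeneracy hypothesis on the rescaled symbol; while routine for $\ell \in \Lambda(k)$, the edge case $\ell = k/3$ requires the slightly distorted dilation $\lambda = 2^{-k/3 + \varepsilon k}$ to accommodate the enlarged $s$-support appearing in Lemma~\ref{lem: stationary phase}, and this is the source of the $2^{O(\varepsilon k)}$ factor in the stated bound.
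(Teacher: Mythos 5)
For $\ell \in \Lambda(k)$ your argument coincides with the paper's: both rescale by $\sigma := s_\mu$, $\lambda := 2^{-\ell}$, verify the rescaled strong non-degeneracy \eqref{eq: super LS non deg} (the paper cites \eqref{eq: weak non-deg} together with \eqref{eq:inner prods scaled}, which amounts to the same Taylor expansion you carry out), apply Theorem~\ref{PS LS J=2} at frequency scale $2^{k-3\ell}$, and unwind via \eqref{eq:scaling mult identity}. This part is fine.

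The case $\ell = k/3$ is where your proposal has a genuine gap. You propose to run the same rescaling with a slightly larger $\lambda = 2^{-k/3 + \varepsilon k}$, but the strong non-degeneracy hypothesis \eqref{eq: super LS non deg} for the rescaled symbol cannot be verified in this regime: the support condition on $a_{k,k/3}^\mu$ enforces only the \emph{upper} bound $|u(\xi)| \lesssim 2^{k/3 + 4\varepsilon k/3}$, so $u$ may vanish. At a point $\eta$ with $u(\eta) = 0$ and $s = \theta_2(\eta)$, one has $\inn{\gamma'(s)}{\eta} = \inn{\gamma''(s)}{\eta} = 0$ simultaneously, and no choice of $\lambda$ makes $\lambda|\inn{\gamma'(s)}{\eta}| + \lambda^2|\inn{\gamma''(s)}{\eta}|$ bounded below by $|\xi|$. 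This is not a technicality: $\ell = k/3$ is precisely the maximally degenerate regime where the phase retains a genuine cubic degeneracy, which is exactly why Theorem~\ref{PS LS J=2} (tailored to two non-vanishing derivatives) cannot apply here after any anisotropic rescaling. Your step deriving $|\inn{\gamma'(s)}{\eta}| = |u(\eta)| + O(\rho^2 2^{k-2\ell}) \sim 2^{k-2\ell}$ silently uses the two-sided bound $|u(\eta)| \sim 2^{k-2\ell}$, which holds only for $\ell \in \Lambda(k)$, and your closing sentence asserts that the $\lambda = 2^{-k/3+\varepsilon k}$ dilation resolves the issue without actually checking it. The paper instead treats $\ell = k/3$ by a different mechanism: it estimates $\sup_t \|m[a_{k,k/3}^\mu](\,\cdot\,;t)\|_{M^2}$ and $\sup_t \|m[a_{k,k/3}^\mu](\,\cdot\,;t)\|_{M^\infty}$ directly by $2^{-k/3 + O(\varepsilon k)}$, exploiting the small $s$-support ($\sim 2^{-k/3 + \varepsilon k}$) of the symbol, and interpolates; this avoids Theorem~\ref{PS LS J=2} entirely for the endpoint case. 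You should replace your $\ell = k/3$ step with an argument of this type.
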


\begin{proof} The argument is essentially the same proof as that of~\cite[Proposition 5.1]{PS2007}. We distinguish two cases: \medskip

    \noindent \underline{Case: $\ell = k/3$}. The result follows from interpolation between the elementary estimates
    \begin{align*}
        \sup_{t \in \R} \| m[a_{k,k/3}^{\mu}](\,\cdot\,; t) \|_{M^2(\R^3)}  &\lesssim 2^{-k/3 +O(\varepsilon k)}, \\ \sup_{t \in \R}\| m[a_{k,k/3}^{\mu}](\,\cdot\,; t)  \|_{M^\infty(\R^3)}  &\lesssim 2^{-k/3 +O(\varepsilon k)}.
    \end{align*}
    Both these inequalities are consequences of the size of the $s$ support of $a_{k,k/3}^{\mu}$. The first is trivial. The second can be deduced, for instance, by adapting arguments from~\cite[\S 5.6]{BGHS-Sobolev}.
  \medskip

    \noindent \underline{Case: $\ell  \in \Lambda(k)$}. Fix $\ell \in \Lambda(k)$, $\mu \in \Z$ and set $\sigma := s_{\mu}$ and $\lambda := 2^{-\ell}$. With the notation from \S\ref{subsec: scaling}, we define 
    \begin{equation*}
   \tilde{\gamma}:= \gamma_{\sigma, \lambda}, \quad \tilde{a} := (a_{k,\ell}^{\mu} )_{\sigma, \lambda}, \quad \tilde{f}:= f_{\sigma, \lambda}, \quad \tilde{M} := ([\gamma]_{\sigma, \lambda})^{-\top}. 
    \end{equation*}
Thus, in view of \eqref{eq:scaling mult identity}, we have
      \begin{equation}\label{eq:scaling mult identity tilda}
        m_{\gamma}[a_{k,\ell}^{\mu}](D; t) f(x) = \lambda \big(m_{\tilde{\gamma}}[\tilde{a}](D; t) \tilde{f}\big)\big(\tilde{M}^{\top} x - t\gamma(\sigma) \big).
    \end{equation}

We claim $\tilde{\gamma}$ and $\tilde{a}$ satisfy the hypotheses of Theorem~\ref{PS LS J=2}.  By \eqref{eq: resc symb supp}, we have 
 \begin{equation}\label{eq: resc supp cond}
    |\xi| \sim 2^{k-3\ell} \qquad \textrm{for all $\xi \in \xisupp \tilde{a}$.}  
 \end{equation}
Combining this with \eqref{eq:inner prods scaled} and \eqref{eq: weak non-deg}, we have
    \begin{equation}\label{eq:2-non-degenerate rescaled}
        |\inn{\tilde{\gamma}'(s)}{\xi}| + |\inn{\tilde{\gamma}''(s)}{\xi}| \gtrsim |\xi| \qquad \text{for all $(\xi; t; s) \in \supp \, \tilde{a}$.}
    \end{equation} 
On the other hand, let $\tilde{\theta}_2$ and $\tilde{u}$ be the functions defined in \S\ref{subsec: microloc dec}, but now with respect to the curve $\tilde{\gamma}$. It follows that
\begin{equation*}
    \sigma + \lambda \tilde{\theta}_2(\xi) = \theta_2\circ\tilde{M} (\xi) \quad \textrm{and} \quad \tilde{u}(\xi) = \lambda u \circ \tilde{M}(\xi)
\end{equation*}
and so 
\begin{equation*}
    \tilde{a}(\xi; t; s) = (a_k)_{\sigma, \lambda}(\xi; t; s) \beta\big(2^{-k+ 3\ell}\tilde{u}(\xi)\big)\zeta(\tilde{\theta}_2(\xi) - \sigma - \mu)\eta\big(\rho(s-\tilde{\theta}_2(\xi))\big).
\end{equation*}
Using the fact that $|\inn{\tilde{\gamma}^{(3)}\circ \tilde{\theta}_2(\xi)}{\xi}| \sim 2^{k-3\ell} \sim |\xi|$ on the support of $\tilde{a}$, it is a straightforward exercise to show that 
\begin{equation*}
    |\partial_{\xi}^{\alpha} \tilde{\theta}_2(\xi)| \lesssim_{\alpha} |\xi|^{-|\alpha|} \qquad \textrm{and} \qquad 2^{-k+ 3\ell}|\partial_{\xi}^{\alpha} \tilde{u}(\xi)| \lesssim_{\alpha} |\xi|^{-|\alpha|}
\end{equation*}
hold for all $\xi \in \xisupp \tilde{a}$ and $\alpha \in \N_0^3$. The derivative bounds
    \begin{equation}\label{eq:symbol bounds rescaled}
        | \partial_{\xi}^{\alpha} \partial_t^i \partial_{s}^j \tilde{a}(\xi; t; s)| \lesssim_{\alpha, i, j} |\xi|^{-|\alpha|} \quad \text{ for all $\alpha \in \N_0^3$ and $i, j \in \N_0$}
    \end{equation}
then easily follow, noting that the derivatives of $(a_k)_{\sigma, \lambda}$ can be controlled following the discussion at the end of \S\ref{subsec: scaling}.

     As a consequence of \eqref{eq: resc supp cond}, we may write $\tilde{a} = \sum_{j=0}^{\infty} \tilde{a}_j$ where each $\tilde{a}_j$ is a localised symbol as defined in \eqref{symbol dec} and the only non-zero terms of the sum correspond to values of $j$ satisfying $2^j \sim 2^{k-3\ell}$.  In view of \eqref{eq:2-non-degenerate rescaled} and \eqref{eq:symbol bounds rescaled}, for $p \geq 6$  we can apply Theorem \ref{PS LS J=2} to obtain
    \begin{equation*}
    \Big( \int_1^2\| m_{\tilde{\gamma}}[\tilde{a}](D; t) \tilde{f} \|_{L^p(\R^3)}^p \, \ud t \Big)^{1/p} \lesssim_{\varepsilon} 2^{O(\varepsilon k)} 2^{-2(k-3\ell)/p} \| \tilde{f} \|_{L^p(\R^3)}.
\end{equation*}
This, together with \eqref{eq:scaling mult identity tilda} and an affine transformation in the spatial variables, gives the desired inequality \eqref{eq:LS J=2}.
\end{proof}




\subsection{Putting everything together} With the above ingredients, we can now conclude the proof of the $L^{12}$ local smoothing estimate.

\begin{proof}[Proof of Proposition \ref{prop:L12}]
By successively applying Lemma \ref{J=3 s loc lem}, Lemma \ref{J=3 spatio temp loc lem}, Proposition \ref{J=3 decoupling} and a second application of Lemma \ref{J=3 spatio temp loc lem}, we obtain
    \begin{multline*}
    \| m[a_{k,\ell}](D; \cdot) f \|_{L^{12} (\R^{3+1})} \lesssim_{\varepsilon, N} 2^{\varepsilon k/2} 2^{ 5\ell/12 } \Big( \sum_{\mu \in \Z} \|m[a_{k,\ell}^{\mu}](D; \,\cdot\,) f\|_{L^{12}(\R^{3+1})}^{12} \Big)^{1/12} \\ + 2^{-kN} \| f \|_{L^{12}(\R^3)}
    \end{multline*}
    for any $N > 0$. By the localisation \eqref{eq:localising input} and Proposition~\ref{prop:LS J=2} we have
    \begin{equation*}
        \Big( \sum_{\mu \in \Z} \|m[a_{k,\ell}^{\mu}](D; \,\cdot\,) f\|_{L^{12}(\R^{3+1})}^{12} \Big)^{1/12} \lesssim 2^{\varepsilon k / 2} 2^{-(k-3\ell)/6 - \ell} \Big( \sum_{\mu \in \Z} \|f_{k,\ell}^{\mu}\|_{L^{12}(\R^{3})}^{12} \Big)^{1/12}.
    \end{equation*}
    Combining the above observations with an application of Lemma \ref{lemma:sum sectors} concludes the proof.
\end{proof}




\section{The non-degenerate case}\label{sec: non-deg}

In the non-degenerate case $\ell = 0$ we appeal to the classical (linear) Stein--Tomas restriction estimate, rather than the trilinear theory from \S\ref{sec: trilinear}.

\begin{proposition}\label{prop: ST}
  Let $\gamma:I \to \R^3$ be a smooth curve and suppose that $a \in C^{\infty}(\widehat{\R}^3\setminus \{0\} \times \R \times \R)$ satisfies the symbol conditions
\begin{equation*}
    |\partial_{\xi}^{\alpha}\partial_t^i \partial_s^j a(\xi;t;s)| \lesssim_{\alpha, i, j} |\xi|^{-|\alpha|} \qquad \textrm{for all $\alpha \in \N_0^3$ and $i$, $j \in \N_0$}
\end{equation*}
and that
\begin{equation}\label{eq: non-deg J=2}
    |\inn{\gamma'(s)}{\xi}| + |\inn{\gamma''(s)}{\xi}| \gtrsim |\xi| \qquad \text{ for all $\,\, (\xi;s) \in \xisupp a \times I$}.
\end{equation}
Let $k \geq 1$. If $a_k$ is defined as in \eqref{symbol dec}, then
\begin{equation*}
    \Big(\int_1^2\|m[a_k](D;t)f\|_{L^6(\R^3)}^6\,\ud t\Big)^{1/6} \lesssim  2^{k/3}\|f\|_{L^2(\R^3)}.
\end{equation*}
\end{proposition}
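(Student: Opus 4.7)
Under the non-degeneracy assumption~\eqref{eq: non-deg J=2}, the second derivative $\partial_s^2\langle\gamma(s),\xi\rangle$ cannot vanish simultaneously with $\partial_s\langle\gamma(s),\xi\rangle$ on $\xisupp a \times I$, so van der Corput with second-order derivatives applies. Following the template of Lemma~\ref{lem: stationary phase}\,(ii) (the case $\ell \in \Lambda(k)$ already treats a variable-coefficient Morse-type phase), a stationary phase expansion in $s$ yields a decomposition
\begin{equation*}
m[a_k](\xi;t) = 2^{-k/2}\sum_{\pm} e^{-it q_1^{\pm}(\xi)} b_k^{\pm}(\xi;t) + \mathrm{RapDec}(2^k),
\end{equation*}
where $q_1^{\pm}(\xi) := \langle\gamma(\theta_1^{\pm}(\xi)),\xi\rangle$ and $b_k^{\pm}$ are smooth bounded amplitudes supported in $\{|\xi|\sim 2^k\}$. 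The proposition thus reduces to establishing the frequency-localised extension bound
\begin{equation*}
\Big(\int_1^2 \|U_k^{\pm}f(\,\cdot\,,t)\|_{L^6(\R^3)}^6\,\ud t\Big)^{1/6} \lesssim 2^{5k/6}\|f\|_{L^2(\R^3)}
\end{equation*}
for the Fourier integral operators $U_k^{\pm}f(x,t) := \int e^{i(\langle x,\xi\rangle - tq_1^{\pm}(\xi))} b_k^{\pm}(\xi;t)\hat{f}(\xi)\,\ud \xi$; combined with the $2^{-k/2}$ prefactor from stationary phase, this gives the desired $2^{k/3}$ constant.

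The next step is to observe that the spacetime Fourier transform of $U_k^{\pm}f$ is supported in an $O(1)$-neighbourhood of the hypersurface $\Sigma_k^{\pm}=\{(\xi,-q_1^{\pm}(\xi)):|\xi|\sim 2^k\}\subset\widehat{\R}^4$, and that $\Sigma_k^{\pm}$ is a developable hypersurface whose Gauss map image is the non-degenerate curve $\gamma(I)\subset\widehat{\R}^3$; indeed $\nabla q_1^{\pm}(\xi) = \gamma\circ\theta_1^{\pm}(\xi)$, so the shape operator has rank one and the level sets of $\theta_1^{\pm}$ supply a foliation of $\Sigma_k^{\pm}$ by $2$-planes. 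The Stein--Tomas-type restriction estimate for such a developable hypersurface reduces, via a Fubini argument along the $2$-plane rulings together with the rescaling from \S\ref{subsec: scaling} to normalise $|\xi|\sim 1$, to the classical $L^2$ restriction theorem for non-degenerate curves in $\R^3$. This supplies the required $L^2\to L^6$ extension estimate with the right power of $2^k$.

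The principal obstacle is the execution of the Stein--Tomas-type estimate for $\Sigma_k^{\pm}$: since its shape operator has rank only one, the classical Stein--Tomas theorem for hypersurfaces with non-vanishing Gaussian curvature does not apply directly, and one must exploit the $1$-dimensional curve structure of the Gauss map image through the rulings and carefully track the $2^k$-dependence when scaling back to unit frequency. Equivalently, a direct Knapp-type computation at the critical scale, based on the dual tube associated to a $\delta$-cap in the curve parameter $\theta_1^{\pm}$ together with the $2$-dimensional ruling directions, produces the sharp exponent $6$ and the constant $2^{5k/6}$ without any $\varepsilon$-loss.
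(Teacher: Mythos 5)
You have correctly identified the stationary-phase reduction and the key geometric feature: the spacetime Fourier support lies near a developable hypersurface in $\widehat{\R}^4$ whose Gauss map image is a curve, so the shape operator has rank one. This matches the paper's strategy. However, your execution of the resulting restriction estimate has genuine gaps. First, you only observe that the Hessian of $q_1^{\pm}$ has rank \emph{at most} one (since $\nabla q_1^{\pm}$ depends on $\xi$ only through $\theta_1^{\pm}(\xi)$); what is actually needed is that the unique nonzero eigenvalue $\kappa(\xi) = \langle \gamma'\circ\theta_1(\xi), \nabla\theta_1(\xi)\rangle$ is bounded away from zero, which the paper verifies by differentiating the defining equation for $\theta_1$ and using $|v(\xi)|\sim 1$. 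Without this quantitative curvature lower bound the Stein--Tomas-type estimate fails, so this verification is not optional.

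Second, the main analytic ingredient is not established. The paper invokes Greenleaf's generalisation of the Stein--Tomas theorem for hypersurfaces in $\R^n$ with $m$ non-vanishing principal curvatures (here $n = 4$, $m = 1$, giving $L^2 \to L^6$). Your proposed substitute, ``a Fubini argument along the $2$-plane rulings reducing to the classical $L^2$ restriction theorem for non-degenerate curves in $\R^3$,'' does not work as stated: a Fubini/Plancherel argument along rulings is clean only when the rulings are parallel (a cylinder), and here they twist as $\theta_1^{\pm}(\xi)$ varies; moreover, quotienting a $2$-dimensional ruling in $\widehat{\R}^4$ would leave a curve in $\widehat{\R}^2$, not $\widehat{\R}^3$, and $L^2$ restriction for non-degenerate curves in $\R^3$ (Drury) gives an exponent nowhere near $6$. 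Your ``Knapp-type computation'' alternative is also not a proof: a Knapp example gives a lower bound on the operator norm (i.e., sharpness of the exponent), not the upper bound you need. Finally, a cosmetic point: under \eqref{eq: non-deg J=2} and after the standard decomposition into pieces with small $\xi$- and $s$-support, the phase has a unique critical point and a single branch $\theta_1$, so the $\pm$ superscripts borrowed from Lemma \ref{lem: stationary phase}(ii) (where $u(\xi)<0$ forces two roots) should not appear here.
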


\begin{proof} 
Decomposing the symbol $a$ into sufficiently many pieces with small $\xi$ and $s$ support, the non-degeneracy condition \eqref{eq: non-deg J=2} can be strengthened to the following: there exists $B >1$ such that
\begin{equation}\label{eq: strengthened 1}
   B^{-1} |\xi| \leq |\inn{\gamma''(s)}{\xi}| \leq B |\xi| \quad \text{ for all $(\xi;s) \in \xisupp a \times I$}
\end{equation}
and there exists $s_* \in I$ such that
\begin{equation}\label{eq: strengthened 2}
    |\inn{\gamma'(s_*)}{\xi}| \leq 10^{-10} B |\xi| \quad \text{ for all $\xi \in \xisupp a$};
\end{equation}
see, for instance,~\cite[\S 4]{PS2007} or~\cite[Chapter 2]{GovindanSheri2023} for details of this type of reduction, which relies on the fact that the oscillatory integral $m[a_k]$ is rapidly decreasing if the phase function $s \mapsto \inn{\gamma(s)}{\xi}$ has no critical points. Under conditions \eqref{eq: strengthened 1} and \eqref{eq: strengthened 2}, there exists a unique smooth mapping $\theta_1 \colon \xisupp a \to I$ such that
\begin{equation}\label{eq: theta1 defining}
    \inn{\gamma' \circ \theta_1(\xi)}{\xi}=0 \quad \text{ for all $\xi \in \xisupp a$}.
\end{equation}
Let $q_1(\xi):=\inn{\gamma \circ \theta_1 (\xi)}{\xi}$. Arguing as in the proof of Lemma \ref{lem: stationary phase}, we may use \eqref{eq: strengthened 1} and van der Corput's lemma with second-order derivatives to  write
\begin{equation*}
    m[a_{k}](\xi;t) = 2^{-k/2} e^{-itq_1(\xi)} b_{k}(2^{-k}\xi;t) (\xi;t)
\end{equation*}
where $b_{k} \in C^{\infty}(\R^{3+1})$  is supported in $B(0,10)$ and satisfies 
\begin{equation*}
    |\partial_t^N b_{k}(2^{-k} \xi;t)| \lesssim_N 1 \qquad \textrm{for all $(\xi; t) \in \R^{3+1}$ and all $N \in \N_0$.}
\end{equation*}
Following the reductions of \S\ref{subsec: trilinear rest red}, we consider an operator $T$ of the form 
\begin{equation*}
    Tg(x,t) :=  \int_{B^3(0,1)} e^{i(\inn{x}{\xi} - tq_1(\xi))} b(\xi) g(\xi)\,\ud \xi
\end{equation*}
for $b \in C^{\infty}(\widehat{\R}^3)$ bounded in absolute value by $1$ which, by \eqref{eq: strengthened 1}, satisfies
\begin{equation*}
\supp b \subseteq \{ \xi \in B^3(0,1) : |v (\xi)| \sim 1\} \qquad \textrm{where} \qquad v(\xi) := \inn{\gamma''\circ \theta_1(\xi)}{\xi}.
\end{equation*}
In particular, to prove the lemma, with the above setup it suffices to show
\begin{equation}\label{eq: Stein--Tomas 1}
    \|T g\|_{L^6(B^{3+1}(0, 2^k))} \lesssim \|g\|_{L^2(\R^3)}.
\end{equation}

The inequality \eqref{eq: Stein--Tomas 1} follows from a generalisation of the classical Stein--Tomas restriction theorem due to Greenleaf~\cite{Greenleaf1981} (see also~\cite[Chapter VIII, \S5 C.]{Stein1993}). To apply this result, we need to show that $q_1$ is smooth over the support of $b$ and satisfies certain curvature conditions. 

Arguing as in the proof of Lemma~\ref{lem: C 1 1/2}, we see that $|\nabla \theta_1(\xi)| \sim 1$ on $\supp b$ and, furthermore, the function $q_1$ is easily seen to be smooth with bounded derivatives over this set. A simple computation shows that the hessian $\partial_{\xi \xi}^2 q_1(\xi)$ is the rank 1 matrix formed by the outer product of the vectors $\nabla \theta_1(\xi)$ and $\gamma' \circ \theta_1(\xi)$. By elementary properties of rank 1 matrices, $\partial_{\xi \xi}^2 q_1(\xi)$ therefore has a unique non-zero eigenvalue given by
\begin{equation*}
    \kappa(\xi) := \inn{\gamma' \circ \theta_1(\xi)}{\nabla \theta_1(\xi)}.
\end{equation*}

We claim that 
\begin{equation}\label{eq: Stein--Tomas 2}
    |\kappa(\xi)| \sim 1 \qquad \textrm{for all $\xi \in \supp b$;}
\end{equation}
geometrically, this means that the surface formed by taking the graph of $q_1$ over some open neighbourhood of $\supp b$ has precisely one non-vanishing principal curvature. This is precisely the geometric condition needed to apply the result of~\cite{Greenleaf1981} in order to deduce \eqref{eq: Stein--Tomas 1}. To see \eqref{eq: Stein--Tomas 2} holds, we take the $\xi$-gradient of the defining equation \eqref{eq: theta1 defining} for $\theta_1$ and then form the inner product with $\nabla \theta_1(\xi)$ to deduce that
\begin{equation*}
    0 = \inn{\gamma' \circ \theta_1(\xi)}{\nabla \theta_1(\xi)} + \inn{\gamma'' \circ \theta_1(\xi)}{\xi}|\nabla \theta_1(\xi)|^2 = \kappa(\xi) + v(\xi)|\nabla \theta_1(\xi)|^2.
\end{equation*}
Since $|v(\xi)| \sim |\nabla \theta_1(\xi)| \sim 1$ on $\supp b$, the claim follows. 
\end{proof}

One can interpolate Proposition \ref{prop: ST} 
with the diagonal $L^6 \to L^6$ local smoothing  result of Pramanik--Seeger~\cite{PS2007} (see Theorem~\ref{PS LS J=2} above) to directly deduce the desired $L^4 \to L^6$ estimate for the non-degenerate piece $a_{k,0}$ introduced in \eqref{eq: ak0 def}.

\begin{lemma}\label{lem: non-deg loc smoothing} For all $k \in \N$ and $\varepsilon > 0$, we have
    \begin{equation*}
    \Big(\int_1^2\|m[a_{k,0}](D, t)f\|_{L^6(\R^3)}^6\,\ud t\Big)^{1/6} \lesssim_{\varepsilon} \delta^{-O(1)} 2^{-k/6 + \varepsilon k} \|f\|_{L^4(\R^3)}.
\end{equation*}
\end{lemma}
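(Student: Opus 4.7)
The plan is to derive the bound by Riesz--Thorin interpolation between two estimates that were essentially already established in the paper. The first endpoint is Proposition~\ref{prop: ST}, which gives the $L^2 \to L^6$ bound $2^{k/3}$ for symbols satisfying the strong non-degeneracy \eqref{eq: super LS non deg}. The second endpoint is the diagonal $L^6 \to L^6$ local smoothing estimate of Theorem~\ref{PS LS J=2} (Pramanik--Seeger), which under the same hypothesis gives the bound $2^{-k/3 + \varepsilon k}$ with any $\varepsilon>0$.

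The first step is to verify that $a_{k,0}$ satisfies the hypotheses of both results. The derivative bounds are inherited directly from \eqref{eq: LS symbol}, with implicit constants of the form $\delta^{-O(1)}$ coming from the $\delta$-dependent cutoff in \eqref{eq: ak0 def}. For the non-degeneracy, on the support of $1 - \eta(2^{-k}\delta^{-20} G_2(s;\xi))$ we have $G_2(s;\xi) \gtrsim \delta^{20} 2^k$; since $|\xi| \sim 2^k$ on the support of $a_k$, this gives $G_2(s;\xi) \gtrsim \delta^{20}|\xi|$, which is precisely \eqref{eq: super LS non deg} with an implicit constant $\delta^{20}$. Tracking this $\delta$-loss through the proofs of Proposition~\ref{prop: ST} and Theorem~\ref{PS LS J=2} yields the two endpoint estimates
\begin{equation*}
   \Big(\int_1^2 \|m[a_{k,0}](D,t) f\|_{L^6}^6 \, \ud t\Big)^{1/6} \lesssim \delta^{-O(1)} 2^{k/3} \|f\|_{L^2}
\end{equation*}
and
\begin{equation*}
   \Big(\int_1^2 \|m[a_{k,0}](D,t) f\|_{L^6}^6 \, \ud t\Big)^{1/6} \lesssim_\varepsilon \delta^{-O(1)} 2^{-k/3 + \varepsilon k} \|f\|_{L^6}.
\end{equation*}

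Finally, applying Riesz--Thorin to the linear operator $f \mapsto m[a_{k,0}](D,\cdot)f$, mapping into $L^6_{t,x}([1,2]\times \R^3)$, we choose $\theta$ so that $1/4 = \theta/2 + (1-\theta)/6$, giving $\theta = 1/4$. The resulting interpolated bound is
\begin{equation*}
    \bigl(\delta^{-O(1)} 2^{k/3}\bigr)^{1/4} \cdot \bigl(\delta^{-O(1)} 2^{-k/3 + \varepsilon k}\bigr)^{3/4} = \delta^{-O(1)} 2^{-k/6 + 3\varepsilon k/4},
\end{equation*}
which is the claim after relabelling $\varepsilon$.

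There is no serious obstacle here: the argument is entirely routine once the non-degeneracy of $a_{k,0}$ is checked. The only point that warrants attention is confirming that the constants in Proposition~\ref{prop: ST} and Theorem~\ref{PS LS J=2} depend at most polynomially on the reciprocal of the lower bound in \eqref{eq: super LS non deg}; this follows by inspection of their proofs, where the $\delta$-dependence enters only through standard stationary phase and decomposition constants.
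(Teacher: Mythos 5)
Your proposal matches the paper's argument exactly: the authors state that Lemma~\ref{lem: non-deg loc smoothing} follows by interpolating Proposition~\ref{prop: ST} with Theorem~\ref{PS LS J=2}, and your verification of the hypotheses, the $\delta$-loss, the interpolation exponent $\theta = 1/4$, and the resulting bound $2^{-k/6 + 3\varepsilon k/4}$ (absorbed by relabelling $\varepsilon$) are all the intended details. No gaps.
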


This lemma reduces the proof of Proposition~\ref{prop:L4 to L6} to establishing the $L^4 \to L^6$ bound in \eqref{eq: critical est} with $a_k$ replaced with the localised symbol $\fa_k := a_k-a_{k,0}$.




\section{Concluding the argument}\label{sec: trilin to lin}

Here we conclude the proof of Proposition~\ref{prop:L4 to L6} and, in particular, present the details of the trilinear reduction discussed in \S\ref{subsec: trilinear red}. 

\begin{proof}[Proof (of Proposition~\ref{prop:L4 to L6})] Fix $\varepsilon > 0$ and let $0 < \delta < 1$, $M_{\varepsilon} \in \N$ and $\bC_{\varepsilon} \geq 1$ be constants, depending only on $\varepsilon$, and chosen to satisfy the forthcoming requirements of the proof. We proceed by inducting on the parameter $k$. For $0 \leq k \leq \delta^{-100}$, the result is trivial and this serves as the base case. We fix $k \in \N$ satisfying $k > \delta^{-100}$ and assume that for $0 \leq n \leq k - 1$ the result holds in the following quantified sense.\medskip

\noindent\textbf{Induction hypothesis.} Let $\gamma \in \fG(\delta_0, M_{\varepsilon})$ and suppose $a \in C^{\infty}(\widehat{\R}^3\setminus \{0\} \times \R \times \R)$ satisfies the symbol condition
\begin{equation}\label{eq: induct 1}
    |\partial_{\xi}^{\alpha}\partial_t^i \partial_s^j a(\xi;t;s)| \leq |\xi|^{-|\alpha|} \quad \textrm{for all $\alpha \in \N_0^3$ and $i$, $j \in \N_0$ with $|\alpha|, i, j \leq M_{\varepsilon}$.}
\end{equation}
For all $0 \leq n \leq k-1$, we have 
\begin{equation*}
    \Big(\int_1^2\|m[a_n](D;t)f\|_{L^6(\R^3)}^6\,\ud t\Big)^{1/6} \leq \bC_{\varepsilon} 2^{-n/6 + \varepsilon n}\|f\|_{L^4(\R^3)}.
\end{equation*}

We remark that if $M_{\varepsilon} \in \N$ is chosen sufficiently large, then all the estimates proved in this paper are uniform over all curves belonging to the class $\fG(\delta_0, M_{\varepsilon})$.

We now turn to the inductive step. Fix $\gamma \in \fG(\delta_0, M_{\varepsilon})$ and $a$ satisfying \eqref{eq: induct 1} and suppose $a_{k, 0}$ is defined as in \eqref{eq: ak0 def}. Provided $\bC_{\varepsilon}$ is chosen sufficiently large in terms of $\delta$, we may apply Lemma~\ref{lem: non-deg loc smoothing} to deduce a favourable bound for the corresponding multiplier $m[a_{k, 0}]$. It remains to show
\begin{equation*}
    \Big(\int_1^2\|m[\fa_k](D;t)f\|_{L^6(\R^3)}^6\,\ud t\Big)^{1/6} \leq (\bC_{\varepsilon}/2) 2^{-k/6 + \varepsilon k}\|f\|_{L^4(\R^3)}
\end{equation*}
for the $\fa_k$ symbols as defined in \eqref{eq: fa symbols}.

For convenience, write 
\begin{equation*}
    U_kf(x,t) := m[\fa_k](D,t)f(x) \quad \textrm{and} \quad U_k^Jf(x,t) := m^J[\fa_k](D,t)f(x) \quad \textrm{for $J \in \fJ(\delta)$.}
\end{equation*}
By fixing an appropriate partition of unity,
\begin{equation*}
    U_kf = \sum_{J \in \fJ(\delta)} U_k^Jf.
\end{equation*}
By an elementary argument (see, for instance,~\cite[Lemma 4.1]{KLO2022}), we have a pointwise bound 
\begin{equation}\label{eq: b-n dec}
    |U_kf(z)| \lesssim \max_{J \in \fJ(\delta)} |U_k^Jf(z)| + \delta^{-1} \sum_{\bJ \in \fJ^{3, \mathrm{sep}}(\delta)} \prod_{J \in \bJ}|U_k^Jf(z)|^{1/3}.
\end{equation}
Taking $L^6$-norms on both sides of \eqref{eq: b-n dec}, we deduce that
\begin{equation}\label{eq: b-n 1}
    \|U_k f\|_{L^6(\R^{3+1})} \lesssim \Big(\sum_{J \in \fJ(\delta)} \|U_k^Jf\|_{L^6(\R^{3+1})}^6 \Big)^{1/6}  + \delta^{-1} \sum_{\bJ \in \fJ^{3, \mathrm{sep}}(\delta)} \Big\|\prod_{J \in \bJ} |U_k^Jf|^{1/3}\Big\|_{L^6(\R^{3+1})}.
\end{equation}

The first term on the right-hand side of \eqref{eq: b-n 1} can be estimated using a combination of rescaling and the induction hypothesis. To this end, let $\tilde{\beta} \in C^{\infty}_c(\widehat{\R}^3)$ be a non-negative function satisfying $\beta \tilde{\beta} = \beta$ and $|\xi| \sim 1$ for $\xi \in \supp \tilde{\beta}$, and define $\tilde{\beta}^k(\xi) := \tilde{\beta}(2^{-k}\xi)$ for $k \in \N_0$. For $J \in \fJ(\delta)$  fix $\tilde{\psi}_J \in C^{\infty}_c(\R)$ satisfying $\supp \tilde{\psi}_J \subseteq 4 \cdot J$, $\tilde{\psi}_J(r) = 1$ for $r \in 2 \cdot J$ and $|\partial_r^N \tilde{\psi}_J(r)| \lesssim_N |J|^{-N}$ for all $N \in \N_0$. We define the Fourier projection $f_J$ of $f$ by 
\begin{equation*}
    \hat{f}_J(\xi) := \chi_J(\xi)  \hat{f}(\xi) \qquad \textrm{where} \qquad\chi_J(\xi) := \tilde{\beta}^k(\xi) \tilde{\psi}_J\circ \theta_2(\xi). 
\end{equation*}
By stationary phase arguments, similar to the proof of Lemma~\ref{J=3 s loc lem}, we then have
\begin{equation*}
  \|U_k^Jf\|_{L^6(\R^{3+1})} \lesssim \|U_k^J f_J\|_{L^6(\R^{3+1})} + 2^{-10k}\|f\|_{L^4(\R^3)} \qquad \textrm{for each $J \in \fJ(\delta)$.}
\end{equation*}
 
Fix $J \in \fJ(\delta)$ with centre $c_J$. By the scaling relation \eqref{eq: mult norm scaled}, we have
\begin{equation*}
    \|U_k^J\|_{L^4(\R^3) \to L^6(\R^{3+1})} \lesssim \delta^{1/2} \|\tilde{U}_k\|_{L^4(\R^3) \to L^6(\R^{3+1})}
\end{equation*}
where $\tilde{U}_k$ is the rescaled operator
\begin{equation*}
     \tilde{U}_kf(x,t) := m_{\tilde{\gamma}}[\tilde{a}](D,t)f(x) \quad \textrm{for} \quad \tilde{\gamma} := \gamma_{c_J, \delta/2}, \quad \tilde{a} := (\fa_k \cdot \psi_J)_{c_J, \delta/2},
\end{equation*}
with the rescalings as defined in \eqref{eq: curve rescale} and \eqref{eq: symbol rescale}. Note that $\tilde{\gamma} \in \fG(\delta_0, M_\varepsilon)$  and, arguing as in the proof of Proposition~\ref{prop:LS J=2}, the symbol $\tilde{a}$ satisfies \eqref{eq: induct 1} (perhaps with a slightly larger constant, but this can be factored out of the symbol). Furthermore, in view of \eqref{eq: resc symb supp}, the symbol $\tilde{a}$ satisfies
\begin{equation*}
    \xisupp \tilde{a} \subset \{ \xi \in \widehat{\R}^3 : |\xi| \sim \delta^3 2^k \}. 
\end{equation*}
In particular, we can write $\tilde{a} = \sum_{n = 0}^{\infty}\tilde{a}_n$ where each $\tilde{a}_n$ is a localised symbol as in \eqref{symbol dec} and the only non-zero terms of this sum correspond to values of $n$ satisfying $2^n \sim \delta^3 2^k$. Thus, by the induction hypothesis, 
\begin{equation*}
     \|\tilde{U}_k\|_{L^4(\R^3) \to L^6(\R^{3+1})} \lesssim \bC_{\varepsilon} (\delta^3 2^k)^{-1/6 + \varepsilon} = \bC_{\varepsilon} \delta^{-1/2 + 3\varepsilon} 2^{-k/6 + \varepsilon k}.
\end{equation*}
Combining these observations,
\begin{align}
    \nonumber
    \Big(\sum_{J \in \fJ(\delta)} \|U_k^Jf\|_{L^6(\R^{3+1})}^6 \Big)^{1/6} &\lesssim \bC_{\varepsilon} \delta^{3\varepsilon} 2^{-k/6 + \varepsilon k} \Big(\sum_{J \in \fJ(\delta)} \|f_J\|_{L^4(\R^3)}^4\Big)^{1/4} \\
    \label{eq: b-n 2}
    &\lesssim \bC_{\varepsilon} \delta^{3\varepsilon} 2^{-k/6 + \varepsilon k} \|f\|_{L^4(\R^3)},
\end{align}
where the final estimate follows from the orthogonality of the $f_J$ via a standard argument.\footnote{Indeed, by interpolation it suffices to show
\begin{equation*}
    \sum_{J \in \fJ(\delta)} \|f_J\|_{L^2(\R^3)}^2 \lesssim \|f\|_{L^2(\R^3)}^2 \quad \textrm{and} \quad \max_{J \in \fJ(\delta)} \|f_J\|_{L^{\infty}(\R^3)} \lesssim \|f\|_{L^{\infty}(\R^3)}.
\end{equation*}
The former follows from Plancherel's theorem and the finite overlap of the Fourier supports of the $f_J$. For the latter, it suffices to show the kernel estimate $\sup_{J \in \fJ(\delta)} \|\mathcal{F}^{-1}\chi_J\|_1 \lesssim 1$. To see this, we apply a rescaling as in the proof of Proposition~\ref{prop:LS J=2}, which transforms $\chi_J$ into a function with favourable derivative bounds.}

On the other hand, each summand in the second term on the right-hand side of \eqref{eq: b-n 1} can be estimated using Proposition~\ref{prop:L4 to L6 trilinear}. In particular, for each fixed $\bJ \in \fJ^{3, \mathrm{sep}}(\delta)$ we have 
\begin{equation}\label{eq: b-n 3}
    \Big\|\prod_{J \in \bJ} |U_k^Jf|^{1/3}\Big\|_{L^6(\R^{3+1})} \lesssim_{\varepsilon} \delta^{-E} 2^{-k/6 + \varepsilon k}\|f\|_{L^4(\R^3)}
\end{equation}
for some absolute constant $E \geq 1$. 

Combining \eqref{eq: b-n 1}, \eqref{eq: b-n 2} and \eqref{eq: b-n 3}, we deduce that
\begin{equation*}
    \|U_k f\|_{L^6(\R^{3+1})} \leq C_{\varepsilon} \big( \bC_{\varepsilon} \delta^{3\varepsilon} + \delta^{-E-4}\big)  2^{-k/6 + \varepsilon k}\|f\|_{L^4(\R^3)},
\end{equation*}
where the constant $C_{\varepsilon} \geq 1$ is an amalgamation of the various implied constants appearing in the preceding argument. Now suppose $\delta > 0$ and $\bC_{\varepsilon}$ have been chosen from the outset so as to satisfy $C_{\varepsilon} \delta^{3\varepsilon} \leq 1/4$ and $\bC_{\varepsilon} \geq  4C_{\varepsilon}\delta^{-E-4}$. It then follows that 
\begin{equation*}
    \|U_k f\|_{L^6(\R^{3+1})} \leq (\bC_{\varepsilon}/2) 2^{-k/6 + \varepsilon k}\|f\|_{L^4(\R^3)},
\end{equation*}
which closes the induction and completes the proof.
\end{proof}




\section{Necessary conditions}\label{sec:nec}

In this section we provide the examples that show that $M_\gamma$ fails to be bounded from $L^p \to L^q$ whenever $(1/p,1/q) \not \in \mathcal{T}$. By a classical result of H\"ormander~\cite{Hormander1960}, $M_{\gamma}$ cannot map $L^p \to L^q$ for any $p > q$. Failure at the point $(1/3,1/3)$ was already shown in \cite{KLO2022} via a modification of the standard Stein-type example for the circular maximal function.  The line joining $(1/3,1/3)$ and $(1/4,1/6)$ is critical via a Knapp-type example, whilst the line joining $(0,0)$ and $(1/4,1/6)$ is critical from the standard example for fixed time averages.

\subsection{The Knapp example}

By an affine rescaling (as in \S\ref{subsec: scaling}), we may assume $\gamma^{(j)}(0) = e_j$ for $1 \leq j \leq 3$, where $e_j$ denotes the standard basis vector. Thus, if $\gamma_{\circ}$ denotes the moment curve as in \S\ref{subsec: scaling}, then $\gamma(s) = \gamma(0) + \gamma_{\circ}(s) + O(s^4)$ for $s \in I$. Furthermore, we may assume without loss of generality that $a := \gamma_3(0) > 0$. Given $\delta > 0$, let $f_{\delta} := \bbone_{R_{\delta}}$ where
\begin{equation*}
    R_{\delta} := \{ y \in \R^3 : |y_j| < \delta^j, \, 1 \leq j \leq 3\}.
\end{equation*}
Clearly, $\| f_\delta \|_{L^p(\R^3)} \lesssim \delta^{6/p}$. Consider the domain
\begin{equation*}
    E_{\delta} := \big\{ x \in \R^3:  |x_j - x_3\gamma_j(0)/a| < \delta^j/2, \, \textrm{ $j=1,2$}, \,\,\,\, a \leq x_3 \leq 2a \big\}.
\end{equation*}
By the moment curve approximation, there exists a constant $c_{\gamma} > 0$ such that if $|s| < c_{\gamma}\delta$, then the following holds. If $x \in E_{\delta}$ and $t(x) := x_3/a$, then
\begin{equation*}
    |x_j - t(x)\gamma_j(s)| \leq |x_j - t(x)\gamma_j(0)| +|t(x)| |\gamma_j(0) - \gamma_j(s)| < \delta^j \quad  \textrm{for $j=1,2$}
\end{equation*}
and
\begin{equation*}
   |x_3 - t(x)\gamma_3(s)| = |t(x)||\gamma_3(0) - \gamma_3(s)| < \delta^3.
\end{equation*}
Thus, we conclude that $x - t(x) \gamma(s) \in R_{\delta}$ for all $|s| < c_{\gamma}\delta$ and therefore
\begin{equation*}
   \| M_\gamma f_\delta \|_{L^q(\R^3)} \gtrsim \delta |E_\delta|^{1/q} \gtrsim \delta^{1+3/q}.
\end{equation*}
The bound $\| M_\gamma f_\delta \|_{L^q(\R^3)} \lesssim \| f_\delta \|_{L^p(\R^3)}$ therefore implies $\delta^{1+3/q} \lesssim \delta^{6/p}$; letting $\delta \to 0$, this can only hold if $1+\frac{3}{q} \geq \frac{6}{p}$. This gives rise to the line joining $(1/3,1/3)$ and $(1/4,1/6)$ in Figure~\ref{fig: Riesz}.

\subsection{Dimensional constraint} This is the standard example for $L^p \to L^q$ boundedness for the fixed time averages. Given $0< \delta < 1$, consider $g_\delta=\bbone_{N_\delta(\gamma)}$ where
\begin{equation*}
    N_\delta(\gamma):=\{x \in \R^3 : |x+ \gamma(s) | \leq \delta \,\, \text{ for some $s \in I$}\}.
\end{equation*}
Clearly, $\| g_\delta \|_{L^p(\R^3)} \lesssim \delta^{2/p}$. Furthermore, $x - \gamma(s) \in N_\delta(\gamma)$ for all $|x| \leq \delta$. 
This readily implies $M_\gamma g_\delta(x) \geq A_1 g_\delta(x) \gtrsim 1$ for $|x|\leq \delta$, and consequently, $\| M_\gamma g_\delta \|_{L^q(\R^3)} \gtrsim \delta^{3/q}$.  The bound $\| M_\gamma g_\delta \|_{L^q(\R^3)} \lesssim \| f_\delta \|_{L^p(\R^3)}$ implies $\delta^{3/q} \lesssim \delta^{2/p}$; letting $\delta \to 0$, this can only hold if $\frac{3}{q}\geq \frac{2}{p}$. This gives rise to the line joining $(0,0)$ and $(1/4, 1/6)$ in Figure~\ref{fig: Riesz}.




\appendix

\section{Localised multilinear restriction estimates}\label{sec: appendix}

Here we present the proof of Theorem~\ref{thm: local BCT}. We use a simple Fubini argument to essentially reduce the problem to particular cases of the multilinear restriction inequalities from~\cite[Theorem 1.3]{BBFL2018} and~\cite[Theorem 5.1]{BCT2006}. More precisely, we require low-regularity versions of these results which apply to $C^{1,1/2}$-hypersurfaces. However, the arguments of~\cite{BBFL2018} and~\cite{BCT2006} extend to cover the $C^{1,\alpha}$-class for any $\alpha > 0$ by incorporating minor modifications to the induction-on-scale scheme as in the proof of~\cite[Theorem 3.6]{KLO2022}; we omit the details. 

\begin{proof}[Proof (of Theorem~\ref{thm: local BCT})] Let $\delta > 0$ be a small constant, which is independent of $\mu$ and $R$ and chosen to satisfy the forthcoming requirements of the proof. We may assume without loss of generality that $0 < \mu < \delta$, since otherwise the desired estimate follows from the $C^{1, 1/2}$ extension of the Bennett--Carbery--Tao multilinear inequality~\cite[Theorem 5.1]{BCT2006}.

By localising the operators and applying a suitable rotation to the coordinate domain, we may assume that there exists an open domain $U_3' \subseteq \widehat{\R}^2$ and a smooth map $\gamma \colon U_3' \to \R$ such that
\begin{equation*}
     \big\{\xi \in \supp a_3 : u(\xi) = 0 \big\} = \{(s, \gamma(s)) : s \in U_3' \}
\end{equation*}
and, moreover, 
\begin{equation*}
    \supp a_3 \subseteq \big\{(s, \gamma(s) + r) : s \in U_3' \textrm{ and } |r| < \mu \big\}.
\end{equation*}
By differentiating the defining identity for $\gamma$, we observe that 
\begin{equation}\label{eq: loc BCT 1}
    (\partial_{\xi_j}u)(s, \gamma(s)) + (\partial_{s_j}\gamma)(s) (\partial_{\xi_3}u)(s, \gamma(s)) = 0 \qquad \textrm{for $j = 1, 2$.}
\end{equation}

By a change of variables, we may write
\begin{equation*}
    E_3f_3(x,t) = \int_{- \mu}^\mu e^{ir x_3} E_{3,r} f_{3,r}(x,t) \,\ud r
\end{equation*}
where $f_{3,r}(s) := f_3(s, \gamma(s) + r)$ and
\begin{equation*}
    E_{3,r}g(x,t) := \int_{\widehat{\R}^2} e^{i(\inn{\Gamma(s)}{x} + t Q_3(s, \gamma(s) + r))} a_{3,r}(s)g(s)\,\ud s 
\end{equation*}
for $\Gamma(s) := (s, \gamma(s))$ and $a_{3,r}(s) := a_3(s, \gamma(s) + r)$. For each fixed $|r| < \mu$, the operator $E_{3,r}$ is the extension operator associated to the $2$-surface
\begin{equation*}
    \Sigma_{3,r}' := \big\{(s, \gamma(s), Q_3(s, \gamma(s) + r)) : s \in U_3' \big\}.
\end{equation*}
When $r = 0$, it follows from \eqref{eq: loc BCT 1} that 
\begin{equation}\label{eq: loc BCT 2}
 \mathrm{span} \Big\{   \begin{pmatrix}
        - \nabla Q_3\circ\Gamma(s) \\
        1
    \end{pmatrix}, \,
    \begin{pmatrix}
        \nabla u \circ \Gamma(s) \\
        0
    \end{pmatrix} \Big\} = N_{\xi} \Sigma_{3,0}' \quad \textrm{for} \quad \xi := (\Gamma(s) , Q_3\circ \Gamma(s));
\end{equation}
that is, the span of the two vectors is equal to the normal space to $\Sigma_{3,0}'$ at $\xi$.

After applying a simple Fubini--Tonelli argument, the problem is reduced to showing
    \begin{equation}\label{eq: loc BCT 3}
   \int_{-\mu}^{\mu} \int_{B(0,R)}\prod_{j=1}^2|E_jf_j(x,t)||E_{3,r}f_{3,r}(x,t)|\,\ud x \ud t \ud r \lesssim_{\mathbf{Q}, \varepsilon} R^{\varepsilon} \mu^{1/2} \prod_{j=1}^3 \|f_j\|_{L^2(U_j)}.
\end{equation}
The key claim is that for each $|r| < \mu$, the trio of extension operators $(E_1, E_2, E_{3,r})$ satisfy the hypothesis of~\cite[Theorem 1.3]{BBFL2018}. In particular, provided $\delta > 0$ is chosen sufficiently small, our transversality hypothesis \eqref{eq: trans} implies that the normal spaces to the submanifolds $\Sigma_1,\Sigma_2,\Sigma_{3,r}$ factorise the space $\widehat{\R}^4$ in the sense that
\begin{equation*}
    N_{\xi_1}\Sigma_1 \oplus N_{\xi_2}\Sigma_2\oplus N_{\xi_3}\Sigma_{3,r}' = \widehat{\R}^4
\end{equation*}
for all $|r|<\mu$ and all choices of $\xi_1\in\Sigma_1,\xi_2\in\Sigma_2,\xi_3\in\Sigma_{3,r}'$. To see this, we first prove the $r = 0$ case by combining \eqref{eq: loc BCT 2} and \eqref{eq: trans}, and then extend to all $|r| < \mu < \delta$ using continuity. Consequently, we can use the formula proved in~\cite[Proposition 1.2]{bennett2010some} together with \eqref{eq: trans} to conclude that the Brascamp--Lieb constant associated to the orthogonal projections onto the tangent spaces $T_{\xi_1}\Sigma_1$, $T_{\xi_2}\Sigma_2$, $T_{\xi_3}\Sigma_{3,r}$ (with Lebesgue exponents $p_1 = p_2 = p_3 = 1/2$) is uniformly bounded. We refer to~\cite{bennett2010some, BBFL2018} for the relevant definitions. This is precisely the hypothesis of~\cite[Theorem 1.3]{BBFL2018} and invoking (a suitable $C^{1,1/2}$ generalisation of) this result we obtain 
\begin{equation*}
    \int_{B(0,R)}\prod_{j=1}^2|E_jf_j(x,t)||E_{3,r}f_{3,r}(x,t)|\,\ud x \ud t \lesssim_{\mathbf{Q}, \varepsilon} R^{\varepsilon} \prod_{j=1}^2 \|f_j\|_{L^2(U_j)} \|f_{3,r}\|_{L^2(U_3')}
\end{equation*}
uniformly over all $|r| < \mu$.  We integrate both sides of this inequality with respect to $r$ and apply the Cauchy--Schwarz inequality to deduce that
\begin{align*}
     \int_{-\mu}^{\mu} \int_{B(0,R)}\prod_{j=1}^2|E_jf_j(x,t)|&|E_{3,r}f_{3,r}(x,t)|\,\ud x \ud t \ud r \\
     &\lesssim_{\mathbf{Q}, \varepsilon} R^{\varepsilon} \mu^{1/2} \prod_{j=1}^2 \|f_j\|_{L^2(U_j)} \Big( \int_{-\mu}^{\mu}\|f_{3,r}\|_{L^2(U_3')}^2\,\ud r \Big)^{1/2}.
\end{align*}
The desired estimate \eqref{eq: loc BCT 3} now follows by reversing the original change of variables. 
\end{proof}




\bibliography{Reference}
\bibliographystyle{amsplain}

\end{document}